\let\csname equation*\endcsname\relax
\let\csname endequation*\endcsname\relax
\numberwithin{equation}{section}
\numberwithin{figure}{section}
\newcommand\tabcaption{\def\@captype{table}\caption}
\newtheorem{thm}{Theorem}[section]
\newtheorem{cor}[thm]{Corollary}
\newtheorem{lem}[thm]{Lemma}
\newtheorem{aspt}[thm]{Assumption}
\newtheorem{rem}[thm]{Remark}
\newtheorem{example}[thm]{Example}
\numberwithin{equation}{section}
\numberwithin{figure}{section}
\newcommand{\CX}{\mathcal{X}}
\newcommand{\CY}{\mathcal{Y}}
\newcommand{\CB}{\mathcal{B}}
\newcommand{\CD}{\mathcal{D}}
\newcommand{\Ustar}{U^*}
\newcommand{\Zstar}{Z^*}
\newcommand{\Vhat}{\widehat{V}}
\newcommand{\Chat}{\widehat{C}}
\newcommand{\CE}{\mathcal{E}}
\newcommand{\CM}{\mathcal{M}}
\newcommand{\BE}{\mathbb{E}}
\newcommand{\reals}{\mathbb{R}}
\newcommand{\norm}[1]{\| #1 \|}
\newcommand{\unit}{\mathds{1}}
\newcommand{\Mmax}{M_{0}}
\newcommand{\Sigmamax}{D_{0}}
\newcommand{\Lambdamax}{\Lambda_{0}}
\newcommand{\Qmax}{Q_{0}}
\newcommand{\Rmax}{R_{0}}
\newcommand{\vbar}{\bar{v}}
\newcommand{\Vbar}{\bar{V}}
\newcommand{\Shat}{\widehat{S}}
\newcommand{\Vhatbar}{\overline{\widehat{V}}}
\begin{document}
\title{Nonlinear stability and ergodicity  of ensemble based Kalman filters}
\author{Xin T Tong$^{1,2}$, Andrew J  Majda$^{1,2}$  and David Kelly$^1$ }
\address{$^1$Courant Institute of Mathematical Sciences, New York University, New York, NY 10012, USA}
\address{$^2$Center for Atmosphere Ocean Science, New York University, New York, NY 10012, USA}
\date{\today}
\eads{
\mailto{tong@cims.nyu.edu}
}

\begin{abstract}
The ensemble Kalman filter (EnKF) and ensemble square root filter (ESRF) are data assimilation methods used to combine high dimensional, nonlinear dynamical models with observed data. Despite their widespread usage in climate science and oil reservoir simulation, very little is known about the long-time behavior of these methods and why they are effective when applied with modest ensemble sizes in large dimensional turbulent dynamical systems.  By following the basic principles of energy dissipation and controllability of filters, this paper  establishes a simple, systematic and rigorous framework for the nonlinear analysis of EnKF and ESRF with arbitrary ensemble size, focusing on the dynamical properties of boundedness and geometric ergodicity. The time uniform boundedness guarantees that the filter estimate will not diverge to machine infinity in finite time, which is a potential threat for EnKF and ESQF known as the catastrophic filter divergence. Geometric ergodicity ensures in addition that the filter has a unique invariant measure and  that initialization errors will dissipate exponentially in time. We establish these results by introducing a natural notion of observable energy dissipation. The time uniform bound is achieved through a simple Lyapunov function argument, this result applies to systems with complete observations and strong kinetic energy dissipation, but also to concrete examples with incomplete observations. With the Lyapunov function argument established, the geometric ergodicity is obtained by verifying the controllability of the filter processes; in particular,  such analysis for ESQF relies on a careful multivariate perturbation analysis of the covariance eigen-structure. 
\end{abstract}

\section{Introduction}
An important problem in scientific computing is the effective assimilation of observational data with high dimensional nonlinear forecast models. 
The classical filtering tools, such as the Kalman filter and extended Kalman filter, are poorly suited to these problems, due both to the nonlinearity of the models and the cost of computing covariance matrices for high dimensional state vectors.   
The ensemble Kalman filter (EnKF) and ensemble square root filter (ESRF) were designed to overcome these difficulties \cite{evensen03,bishop01,And01, MH12}. 
The basic idea of these methods is to propagate an ensemble $\{V^{(1)}_n,\ldots, V^{(K)}_n\}$ to describe the forecast distribution of the underlying system $U_n$, and then assimilating the new observation via a Kalman-type update using the ensemble mean and covariance. The state estimate remains useful even when the ensemble size is several orders of magnitude smaller than the state dimension, leading to a considerable benefit in computational cost. Due to their efficiency, EnKF and ESRF are broadly used, notably in ocean-atmosphere science \cite{kalnay03,MH12} and oil reservoir simulations \cite{navdal05}. 

Despite the ubiquitous application of EnKF and ESRF, little is known of their dynamical behavior beyond that provided by numerical experiments. Existing theoretical studies of EnKF and ESRF focus mainly on either error estimation of one single assimilation step \cite{FB07, LDN08}, or in the case of linear model dynamics, convergence to the classical Kalman filter as the ensemble size tends to infinity \cite{legland10,mandel2011convergence}. The aim of this article is to address a more practical scenario, namely by looking at the long-time behavior of the ensemble when the ensemble size is fixed and where the underlying model is nonlinear.  
To be specific, we seek to address the following questions:
\begin{enumerate}
\item Under what model conditions does the ensemble remain bounded on an infinite time horizon?
\item Are the filter processes ergodic and how quickly do they lose memory of initial conditions?
\end{enumerate}
These two questions are of great practical importance. Boundedness of the ensemble prohibits the state estimate from diverging to infinity, thereby precluding the disastrous phenomena of catastrophic filter divergence \cite{MH08, HM10, GM13, MH12}. Ergodicity of the ensemble ensures that errors in the initialization of the filter will not affect the performance of the filter in the long run \cite{CR11, TvH12, TvH14}, geometric ergodicity further insures that the error will dissipate exponentially fast in time.  
To the best of our knowledge, the only article in a similar setting is \cite{KLS14}, where the authors show well-posedness of EnKF with bounds which can grow exponentially in time and accuracy under variance inflation.
%

In the study of dynamical systems, 
boundedness can be demonstrated through the construction of a Lyapunov function $\mathcal{E}$, which is a positive function statisfying the dissipation criterion
\begin{equ}\label{e:intro_lyap}
\mathbb{E}_{n-1} \mathcal{E}(U_n)\leq (1-\beta) \mathcal{E}(U_{n-1})+K\;.
\end{equ}
Here $\mathbb{E}_{n-1}$ denotes the conditional expectation with respect to the information at time $n-1$, and $0 <  \beta < 1$ is a constant. Using the discrete Gr\"{o}nwall inequality, we immediately find that 
\[
\mathbb{E} \mathcal{E}(U_n)\leq (1-\beta)^n\mathbb{E} \mathcal{E}(U_0)+K\beta^{-1},
\]
which shows that, under expectation of the Lyapunov function $\CE$, the state $U_n$ is bounded uniformly in $n$.  
In geophysically relevant models, such as the Lorenz equations and Navier-Stokes equations, the corresponding $\mathcal{E}$ can be chosen as the kinetic energy $\CE (\cdot)= |\cdot|^2$. In this scenario, the relation \eqref{e:intro_lyap} is known as an energy principle. 
\par
A natural strategy for proving boundedness of EnKF and ESRF is to check whether the energy principles of the nonlinear system are inherited by the ensemble. In other words, if $\mathcal{E}$ is a Lyapunov function of the original system $U_n$, can we use $\mathcal{E}$ to construct a Lyapunov function for the ensemble processes $\{V^{(k)}_n\}_{k=1}^K$. 
\par 
In Theorems \ref{thm:EnKFobs} and \ref{thm:EAKF} we will show that this construction is quite straight-forward, provided that the underlying model satisfies a so-called observable energy criterion. To be specific, suppose the model is observed linearly via $Z_n = HU_n + \zeta_n$, then the observable energy criterion states that the underlying model satisfies \eqref{e:intro_lyap} with the choice $\mathcal{E}(u)=|Hu|^2$, that is
\begin{equ}\label{e:intro_OEC}
\BE_{n-1} |H U_n|^2 \leq (1-\beta) |H U_{n-1}|^2 + K\;.
\end{equ}
 Under this assumption, it is shown in Theorems \ref{thm:EnKFobs}, \ref{thm:EAKF} that the ensemble $\{V_n^{(k)}\}_{k=1}^K$ satisfies a related energy principle.
%
%
%
%
Hence, if the model satisfies \eqref{e:intro_OEC} with full rank $H$, then the ensemble $\{V_n^{(k)}\}$ must remain bounded on an infinite time horizon. When $H$ is not of full rank, one still obtains an energy principle from \eqref{e:intro_OEC}, but can only conclude boundedness of the observable ensemble $\{HV^{(k)}\}_{k=1}^K$. 
\par


\par
With a Lyapunov function established, the EnKF and ESRF are shown to be geometrically ergodic by Theorems \ref{thm:EnKFergo}, \ref{thm:ETKFergo} and \ref{thm:EAKFergo}, assuming the nonlinear system is propagated with non-degenerate noise. The proofs are conceptually simple, as it suffices to check to the controllability of the filters, thanks to the classical work of \cite{MT93, MS02, MSH02}. The only technical challenge lies in the eigenvalue decomposition of matrices required in the assimilation step of ESRF. This  can be resolved by a careful multivariate perturbation analysis of the underlying matrices. 
\par
With  a short discussion in Section \ref{sec:range}, we will demonstrate a few sufficient conditions that imply the observable energy criterion \eqref{e:intro_OEC}. When $H$ is of full rank, the observable energy criterion holds provided that the dynamics have an energy principle with strong contraction parameter, depending on the condition number of $H$. When $H$ is not of full rank, the observable energy criterion does not hold in general, but is verifiable in several concrete examples through direct calculation. This dichotomy of observational rank  agrees with known numerical evidence, where the ensemble behaves stably when full rank observations are available, but can experience filter divergence when the observations are sparse \cite{HM10, MH12}, or even reach machine infinity in finite time, which is known as  catastrophic filter divergence \cite{MH08, MH12, GM13}. Using the same philosophy in this paper, the authors have found a concrete dynamical system that satisfies the kinetic energy criterion but not the observable energy criterion, and whose EnKF ensemble (provably) experiences catastrophic filter divergence with large probability. The authors have also found a general adaptive covariance inflation scheme, which always insures that the ensemble remains bounded on an infinite time horizon, without hurting the accuracy of original filters. These results will be reported in two separate papers \cite{KMT15, TMK15}.

\par
The article is structured as follows. In Section \ref{sec:model} we formulate the EnKF and ESRF methods and also introduce the notion of Lyapunov functions and energy principles. In Section \ref{sec:stabobs} we establish a simple framework to verify energy principles for EnKF and ESRF using the observable energy. Section \ref{sec:range} discusses the applicability of this framework by studying a few sufficient conditions that guarantee the observable energy condition. In Section \ref{sec:ergo} we prove the geometric ergodicity of the filter processes assuming the stability results in Section \ref{sec:stabobs} hold. In Section \ref{sec:conclude} we conclude this paper and discuss possible extensions. 

\section{Models setup and fundamental concepts}
\subsection{Model setup}
\label{sec:model}
In this paper, we assume the signal sequence $U_n\in \mathbb{R}^d$  is generated through a nonlinear mapping $\Psi_h$ plus a mean zero noise $\zeta_n$, and the observation is a linear one plus some mean zero noise in $\mathbb{R}^q$:
\begin{equation}
\label{sys:discrete}
U_{n}=\Psi_h(U_{n-1})+\zeta_n, \quad Z_n=HU_n+\xi_n.
\end{equation}
Here $\{\xi_n\}$ is an i.i.d. noise sequence, and $\zeta_n$ is independent of $\zeta_{1},\ldots \zeta_{n-1}$ conditioned on the realization of $U_{n-1}$. In many cases, the model may be generated by the solution of a stochastic differential equation (SDE) 
\begin{equation}
\label{sys:flow}
du_t=\psi(u_t)dt+\Sigma dW_t 
\end{equation}
by taking $U_n = u_{nh}$ for some fixed $h>0$. A short discussion of this discrete time formulation is attached in \ref{sec:discrete}.
%

At this stage, we impose no restrictions on $\zeta_n$ except that it is mean zero with a conditional covariance depends on $U_{n-1}$:
\begin{equation}
\label{eqn:Rh}
\mathbb{E}(\zeta_n |U_{n-1})=0, \quad \mathbb{E}(\zeta_n\otimes \zeta_n|U_{n-1})=R_h(U_{n-1}).
\end{equation}

As for the observation part, we will assume in this paper that
\[
\text{rank}(H)=q\leq d,\quad \mathbb{E}(\xi_n |U_{n-1})=0, \quad \mathbb{E}(\xi_n\otimes \xi_n|U_{n-1})=I_q. 
\]
The seemingly restrictive choice of observational noise covariance can be made without loss of generality. Indeed, any observational covariance can be reduced to the identity via a simple coordinate change on the filtering problem. To apply the results of this article to a filtering problem with non-trivial observational covariance, one must first apply the coordindate change and then check the assumptions in the new system of coordinates.  Details are contained in the remark below. 

\begin{rem}\label{rmk:svd}
Suppose that $\xi_n$ has a nonsingular covariance matrix $\Gamma$ and  $\Gamma^{-1/2}H$ has an SVD decomposition $\Gamma^{-1/2}H=\Phi \Lambda \Psi^T$, then we change the coordinate system and consider 
\begin{equation}
\label{eqn:HSVD}
\widetilde{U}_n=\Psi^TU_n,\quad \tilde{\xi}_n=\Phi^T\Gamma^{-1/2}\xi_n,\quad
\widetilde{Z}_n=\Phi^T \Gamma^{-1/2}Z_n=\Lambda \widetilde{U}_n+\tilde{\xi}_n. 
\end{equation}
Hence this change of coordinates also reduces the observation matrix to a diagonal matrix. If the observation dimension $q$ is larger than the model dimension $d$, the last $d-q$ diagonal entries of $\Lambda$ are zero, so the last $d-q$ rows of $\widetilde{Z}_n$ are independent of the signal and useless for filtering purpose, which we can ignore and set $d=q$. Moreover $\tilde{\xi}_n$ will have covariance matrix $I_q$. Since all the transformations above are linear and  bijective,  filtering $\widetilde{U}_n$ with $\widetilde{Z}_n$ is equivalent to filtering $U_n$ with $Z_n$. 
On the other hand, if the covariance $\Gamma$ is singular, then certain linear subspace can be observed exactly, and may cause the filtering operation to be singular. We do not consider such pathological cases in this paper. 
\end{rem}

\subsection{Ensemble Kalman filter}
In the standard Kalman filtering theory, the conditional distribution of the signal  process $U_n$  given the observation sequence $Z_1,\ldots, Z_n$ is given by a Gaussian distribution. EnKF inherits this idea by using a group of ensembles $\{V_n^{(k)}\}_{k=1}^K$ to represent this Gaussian distribution, as the mean and covariance can be taken as the ensemble mean and covariance. The EnKF operates very much like a Kalman filter, except its forecast step requires a Monte Carlo simulation due to the nonlinearity of the system. In detail, the EnKF is an iteration of following two steps, with (for instance) $\widehat{V}^{(k)}_0$ being sampled from the equilibrium measure of $U_n$.
\begin{itemize}
\item Forecast step: from the posterior ensemble at time $n-1$, $\{V^{(k)}_{n-1}\}_{k=1}^K$,
a forecast ensemble for time $n$ is generated by 
\[
\widehat{V}_{n}^{(k)}=\Psi_h(V_{n-1}^{(k)})+\zeta^{(k)}_n, 
\]
where $\zeta^{(k)}_n$ are independent samples drawn from the same distribution as $\zeta_n$. Then the prior distribution for time $n$ is described by the ensemble mean and covariance:
\begin{equation}
\label{eqn:C}
\overline{\widehat{V}}_{n}:=\frac{1}{K}\sum^K_{k=1} \widehat{V}^{(k)}_{n},\quad \widehat{C}_{n}:=\frac{1}{K-1}\sum_{k=1}^K(\widehat{V}^{(k)}_{n}-\overline{\widehat{V}}_{n})\otimes (\widehat{V}^{(k)}_{n}-\overline{\widehat{V}}_{n}). 
\end{equation}
\item Analysis step: upon receiving the new observation $Z_n$, random perturbations of it are generated by adding $\xi^{(k)}_{n}$:
\[
Z^{(k)}_{n}=Z_{n}+\xi^{(k)}_{n},
\]
where $\xi^{(k)}_n$ are independent samples drawn from the same distribution as $\xi_{n}$. Each ensemble member is then updated to 
\[
V^{(k)}_{n}=\widehat{V}^{(k)}_{n}-\widehat{C}_{n}H^T(I+H\widehat{C}_nH^T)^{-1}(H \widehat{V}^{(k)}_n-Z^{(k)}_{n})\;.
\]
\end{itemize}
In summary, the EnKF is generated by the following dynamics
\begin{equation}
\label{sys:EnKF}
\begin{gathered}
V^{(k)}_{n}=\widehat{V}^{(k)}_{n}-\widehat{C}_{n}H^T(I+H\widehat{C}_nH^T)^{-1}(H \widehat{V}^{(k)}_n-Z^{(k)}_{n}),\\
\widehat{V}^{(k)}_{n}=\Psi_h(V^{(k)}_{n-1})+\zeta^{(k)}_n,\quad \overline{\widehat{V}}_{n}:=\frac{1}{K}\sum^{K}_{k=1}\widehat{V}^{(k)}_n,\quad Z^{(k)}_{n+1}=Z_{n}+\xi^{(k)}_{n},\\
\widehat{C}_{n}:=\frac{1}{K-1}\sum_{k=1}^K(\widehat{V}^{(k)}_{n}-\overline{\widehat{V}}_{n})\otimes (\widehat{V}^{(k)}_{n}-\overline{\widehat{V}}_{n})\;.
\end{gathered}
\end{equation}
From this formulation, it is clear that the augmented process $\{U_n, V^{(1)}_n,\ldots, V^{(K)}_n\}$ is a Markov chain. In the following discussion, we will denote the natural filtration up to time $n$ as $\mathcal{F}_n=\sigma\{U_m, V^{(1)}_m,\ldots, V^{(K)}_m,m\leq n\}$, and denote the conditional expectation with respect to $\mathcal{F}_n$ as $\mathbb{E}_n$.

\subsection{Ensemble square root filters}
\label{sec:ESRFintro}
One drawback of EnKF comes from its usage of artificial noise $\xi_n^{(k)}$, as this introduces unnecessary sampling errors, particularly when the ensemble size is small \cite{evensen04}. The motivation behind the artificial noise is to make the posterior ensemble covariance 
\[
C_{n}:=\frac{1}{K-1}\sum_{k=1}^K(V^{(k)}_{n}-\overline{V}_{n})\otimes (V^{(k)}_{n}-\overline{V}_{n}),\quad  \overline{V}_{n}:=\frac{1}{K}\sum^{K}_{k=1}V^{(k)}_n,
\]
satisfy the covariance update of the standard Kalman filter
\begin{equation}
\label{eqn:postprior}
C_n=\widehat{C}_n-\widehat{C}_nH^T(H^T\widehat{C}_nH+I)^{-1}H\widehat{C}_n,
\end{equation}
when the left hand is averaged over $\xi_n^{(k)}$ \cite{AA99, HWS01, FB07} . 
ESRFs, including the ensemble transform Kalman filter (ETKF) and the ensemble adjustment Kalman filter (EAKF), aim to resolve this issue by manipulating the posterior spreads to ensure that \eqref{eqn:postprior} holds. Both ETKF and EAKF algorithms are described by the following update steps, with the only difference occurring in the assimilation step for the spread. As with EnKF, the initial ensemble $\{V_{0}^{(k)}\}_{k=1}^K$ is (for instance) sampled from the equilibrium distribution of $U_n$. 
\begin{itemize}
\item Forecast step: identical to EnKF, the forecast ensembles at time $n$ is generated from posterior ensembles at time $n-1$:
\[
\widehat{V}_{n}^{(k)}=\Psi_h(V_{n-1}^{(k)})+\zeta^{(k)}_n.
\]
The forecast ensemble covariance $\widehat{C}_n$ is then computed using \eqref{eqn:C}. 
\item Assimilation step for the mean: upon receiving the new observation $Z_n$, the posterior ensemble mean is updated through
\begin{equation}
\label{sys:ESRFmean}
\overline{V}_n=\overline{\widehat{V}_n}-\widehat{C}_{n}H^T(I+H\widehat{C}_nH^T)^{-1}(H \overline{\widehat{V}}_n-Z_{n}),\quad \overline{\widehat{V}}_{n}=\frac{1}{K}\sum^{K}_{k=1}\widehat{V}^{(k)}_n. 
\end{equation}
\item Assimilation step for the spread: The forecast ensemble spread is given by the $d \times K$ matrix 
\[
\widehat{S}_n = [\widehat{V}^{(1)}_n-\overline{\widehat{V}}_n,\dots , \widehat{V}^{(K)}_n-\overline{\widehat{V}}_n  ]\;.
\]
To update the spread, first find a matrix $T_n\in \mathbb{R}^{d\times d}$ (for ETKF) or $A_n\in \mathbb{R}^{K\times K}$ (for EAKF) such that
\begin{equation}
\label{sys:ESRFspread}
\frac{1}{K-1}T_n\widehat{S}_n \otimes  T\widehat{S}_n =\frac{1}{K-1}\widehat{S}_n A_n\otimes  \widehat{S}_n A_n=\widehat{C}_n-\widehat{C}_nH^T(H^T\widehat{C}_nH+I)^{-1}H\widehat{C}_n\;. 
\end{equation}
The posterior spread is updated to $S_n=T_n\widehat{S}_n$ (for ETKF) or $S_n=\widehat{S}_nA_n$ (EAKF), and the ensemble members are updated to
\[
V^{(k)}_n = \overline{V}_n + S_n^{(k)}\;,
\]
where $S_n^{(k)}$ denotes the $k$-th column of the updated spread matrix $S_n$. 
By construction, the posterior covariance $C_n=(K-1)^{-1}S_n^TS_n$  satisfies \eqref{eqn:postprior}. 
\end{itemize}
At this stage it suffices to know that such $A_n$ and $T_n$ exist, their finer properties play no role in the discussion concerning stability. 
%
Their formulation will become important when we want to study ergodicity in Section \ref{sec:ergo}, and a detailed formulation will be given there. Based on our description above,  the augmented process $\{U_n, V^{(1)}_n,\ldots, V^{(K)}_n\}$ is again a Markov chain. As above, we will denote the natural filtration up to time $n$ as $\mathcal{F}_n=\sigma\{U_m, V^{(1)}_m,\ldots, V^{(K)}_m,m\leq n\}$, and denote the conditional expectation with respect to $\mathcal{F}_n$ as $\mathbb{E}_n$.

\subsection{Covariance inflation}

\label{sec:covinfl}
When applying EnKF and ESRF, the forecast ensemble covariance $\widehat{C}_n$ often underestimates the uncertainty in the forecast model. 
An ad hoc solution is to use inflated or modified forecast ensemble covariance in the assimilation step.  We  will discuss three types of such methods in this paper:
\begin{itemize}
\item Additive inflation: replace $\widehat{C}_n$ with $\widehat{C}_n+\lambda I$ for a proper $\lambda>0$;
\item Uniform inflation: replace $\widehat{C}_n$ with $(1+\lambda)\widehat{C}_n$ for a proper $\lambda>0$.
\end{itemize}
It should be noted that additive inflation is only used in EnKF and not in the square root filters since it is not clear how an additive inflation should be applied at the level of the matrix square root. There are other ad hoc ways of modifying ESRF methods with additive inflation, see page 147 of \cite{MH12} for more details. 

\subsection{Energy principles and Lyapunov functions}
Stability for nonlinear systems can be studied through energy principles. That is, certain types of energy are preserved or dissipated {by the dynamics}. In a stochastic setting, this idea is formalized using Lyapunov functions. In this paper,  we say that $\mathcal{E}$ is a \emph{Lyapunov function} for a Markov chain $X_n$  if there exists positive constants $0<\beta<1$ and $K$ such that
\begin{equation}
\label{eqn:timediscreteLyap}
	\mathbb{E}(\mathcal{E}(X_n)|X_{n-1})\leq (1-\beta) \mathcal{E}(X_{n-1})+K
\end{equation}
for all $n \in \mathbb{Z}^+$. For a continuous time Markov process $X_t$ with generator $\mathcal{L}$, the previous relation is replaced by 
\begin{equation}
\label{eqn:generatorcts}
	\mathcal{L}\mathcal{E}(x)\leq -\beta \mathcal{E}(x)+K\;,
\end{equation}
where we only require $\beta,K>0$. As a simple consequence of Gr\"{o}nwall's inequality, the existence of a Lyapunov function implies (respectively) that 
\begin{equation}
\label{eqn:expectation}
\mathbb{E}\mathcal{E}(X_n)\leq (1-\beta)^n\mathbb{E}\mathcal{E}X_0+K/\beta,\quad \text{or}\quad
\mathbb{E}\mathcal{E}(X_t)\leq e^{-\beta t}\mathbb{E}\mathcal{E}X_0+K/\beta.
\end{equation}
In other words, $\mathbb{E}\mathcal{E}(X_n)$ (or respectively $\mathbb{E}\mathcal{E}(X_t)$) can be bounded uniformly in time. In this case, we say that $X_n$ (or $X_t$) is \emph{$\CE$-bounded}. 
\par
When the sub-level sets of $\mathcal{E}$ are compact, we will call $\mathcal{E}$ a \emph{strong Lyapunov function}. This additional requirement implies that an $\CE$-bounded Markov chain revisits a large enough compact set arbitrarily many times and that the distribution $X_n$ forms a tight sequence. Existence of strong Lyapunov functions will be crucial when proving geometric ergodicity. 

In this paper, we will assume the kinetic energy of the process, $\CE (\cdot) =  |\cdot|^2$, is a strong Lyapunov function. Based on our formulation of the random sequence $U_n$, this is equivalent to the following.
\begin{aspt}[Kinetic energy principle]\label{ass:energy}
\label{aspt:kinetic} There exist constants $0 < \beta_h < 1, K_h > 0$, such that
\[
|\Psi_h(u)|^2+ \text{tr}(R_h(u))\leq (1-\beta_h) |u|^2+K_h\;, 
\]
for all $u\in \mathbb{R}^d$ where $R_h$ is the conditional covariance of the system noise defined in \eqref{eqn:Rh}. 
 \end{aspt} 
When the random sequence $U_n$ is generated from discrete time solutions of an SDE $u_t$, this kinetic energy principle can be verified directly by computing the generator \eqref{eqn:generatorcts} or simply checking the drift of the SDE, see \ref{sec:discrete} for more details. Using this convenient argument, we can easily verify that the following examples all satisfy Assumption \ref{aspt:kinetic}. 
\begin{example}[Stochastic turbulence models]
When $U_n$ is a time discretization of the SDE \eqref{sys:flow}, it suffices to require that for certain $\beta, K>0$
\begin{equation}
\label{eqn:cts}
\mathcal{L}|u|^2=\psi(u)\cdot u+\frac{1}{2}\text{tr}(\Sigma\Sigma^T)\leq -\beta |u|^2+ K,
\end{equation}
since then Assumption \ref{aspt:kinetic} would hold with $\beta_h=1-e^{-\beta h}, K_h=Kh$. Relation \eqref{eqn:cts} holds for many stochastic turbulence models, which generally take the form
\[
dU_t=- DU_t dt +B(U_t)dt+f+\Sigma dW_t.
\]
The linear operator $D$ represents damping, so its symmetric part $\frac{1}{2}(D^T+D)$ is positive semidefinite. The nonlinear interaction term $B$ is  energy preserving, with $\langle U_t, B(U_t)\rangle =0$. Then it is easy to verify that \eqref{eqn:cts} holds for $\beta=\frac{1}{4}\lambda_{min}(D^T+D)$ and $K=|f|^2/\beta$. For more information on these models and their application to turbulence, see \cite{MAG05, MH12, MT14cpam}.  
\end{example}

In the following, we present two well known turbulent systems that all satisfy relation \eqref{eqn:cts}. Hence, Assumption \ref{aspt:kinetic} holds for their discrete time formulation.
\begin{example}[Lorenz 96] 
Let $U_t=(u_{1,t},\dots,u_{N,t})$ be an $N\geq 4$ (usually $N=40$) dimensional system, with its dynamics given by
\[
\dot{u}_{i,t}=-u_{i-2,t}u_{i-1,t}+u_{i-1,t}u_{i+1,t}-u_{i,t}+F,
\]
with the periodic boundary condition $u_{t,k} = u_{t,k-N} = u_{t,k+N}$ for all $k$ and where $F$ is a constant forcing. One can easily show that
\[
\dot{|U_t|^2}=-2|U_t|^2+2F\sum_{i=1}^N u_{i,t}
\leq -|U_t|^2+NF^2. 
\] 
\end{example}

\begin{example}[Truncated stochastic Navier-Stokes system] The incompressible stochastic Navier-Stokes  equation on a two dimensional torus can be described through the vorticity field
\[
dv_t=\nu \Delta v_t dt-B(\mathcal{K}v_t, v_t)dt+\sum_{k\in \mathbb{Z}^2}\sigma_k e_kdW_{k,t}.
\]
Here $e_k$ is the Fourier basis for square integrable functions on the torus, so that $e_k(x)=e^{ik\cdot x}$, $\mathcal{K}$ is the linear Biot-Savart integral operator,  mapping $e_k$ to $e_k i k^\bot /|k|^2$, $B$ is the advection effect  $B(u,v):=(u\cdot \nabla)v$ and $(W_{k,t})_{k\in\mathbb{Z}^2}$ is a sequence of independent Wiener processes.  It is well known that for this process the $L^2$-norm is dissipative in time. 

For practical numerical implementation, one needs to truncate the infinite dimensional object $v_t$. For example, one can ignore the high Fourier modes and assume the truncated $v_t$ has the following Fourier decomposition:
\[
\tilde{v}_t=\sum_{k\in I} v_{k,t}e_k+v^*_{k,t}e_{-k}.
\]
Here $*$ denote complex conjugacy, and  
\[
I=\{k=(k_1,k_2)\in \mathbb{Z}^2: |k|\leq N, k\neq \vec{0}, \text{arg}(k_1+k_2 i)\in [0,\pi)\}. 
\]
Note by  formulation, $\tilde{v}_t$ is real valued, and the dynamics of $\tilde{v}_{k,t}$ can be specified by the dynamics of $v_{k,t}$ as follow:
\[
dv_{k,t}=-\nu|k|^2 v_{k,t}dt-\mathbf{P}_k B(\mathcal{K}\tilde{v}_t, \tilde{v}_t)dt +\sigma_k dW_{k,t}.
\]
where $\mathbf{P}_k:v\mapsto \langle v, e_k\rangle $ evaluates the $k$-th Fourier coefficient. Then the full dynamics  $U_t=(v_{k,t})_{k\in I}$ follows a energy principle:
\[
\mathcal{L}|U_t|^2=\sum_{k\in I} \mathcal{L}|v_{k,t}|^2=-2\nu \sum_{k\in I}|k|^2|v_{k,t}|^2+\sum_{k\in I}\sigma_k^2
\leq -2\nu|U_t|^2+\sum_{k\in I}\sigma_k^2. 
\]
Here we used the identity  $\langle v, B(\mathcal{K}v,v)\rangle =0$, so 
\[
\sum_{k\in I} v_{k,t} \mathbf{P}_k B(\mathcal{K}\tilde{v}_t,\tilde{v}_t)=
\langle \sum_{k\in I} v_{k,t} e_k,   B(\mathcal{K}\tilde{v}_t,\tilde{v}_t)\rangle=
\frac{1}{2}\langle \tilde{v}_{t},   B(\mathcal{K}\tilde{v}_t,\tilde{v}_t)\rangle=0. 
\]
\end{example}
In some other nonlinear models, the stability can be demonstrated only after first applying a linear coordinate change. 
%
\begin{example}[Lorenz 63]\label{eg:l63}
\label{exp:lorenz63}
Let $U_t=(x_t,y_t,z_t)$ be a three dimensional system following an ordinary differential equation (ODE):
\[
\frac{d}{dt}x_t=\sigma(y_t-x_t),\quad \frac{d}{dt}y_t=x_t(r-z_t)-y_t,\quad \frac{d}{dt}z_t=x_ty_t-b z_t. 
\]
Then we can define 
\[
\mathcal{E}(U_t)=rx_t^2+\sigma y_t^2+\sigma(z_t-2r)^2,
\]
so using Young's inequality and $\beta:=\min\{2,2\sigma,b\}$,
\begin{align*}
\frac{d}{dt}\mathcal{E}(U_t)&=-2\sigma(rx_t^2+y_t^2+b(z_t-r)^2)+2b\sigma r^2\\
&\leq  -2\sigma(rx_t^2+y_t^2+\tfrac{1}{2}b(z_t-2r)^2)+4b\sigma r^2\\
&\leq -\beta \mathcal{E}(U_t)+4b\sigma r^2.
\end{align*}
One should note here that $|U_t|^2$ does not satisfy relation \eqref{eqn:cts} for all choices of parameters.  
\end{example}
Section \ref{sec:range} will have a detailed discussion of this type of Lyapunov function, where it will be shown that the Lyapunov dissipation relation \eqref{eqn:timediscreteLyap} can be preserved through constant shifts for quadratic functions, but not through linear transformations in general.

%
 
\section{Bounding the observable energy}
\label{sec:stabobs}
In the analysis of ensemble Kalman filters, one natural strategy is obtaining a control over the configuration of the ensemble members. However, such control is very difficult in general, as nonlinear dynamics are known be chaotic and turbulent.  In this section, we will discuss one type of condition which circumvents this problem. Generally speaking, this condition requires that the energy of the observable part, $\CE(\cdot) = |H\cdot|^2$, be a  Lyapunov function for the model $U_n$. In another words, we assume there is a $\beta_h\in (0,1)$ and $K_h>0$  such that 
\[
\mathbb{E}_{n-1} |H U_n|^2\leq (1-\beta_h) |HU_{n-1}|^2+K_h,\quad \text{a.s.}
\] 
This condition can  be formulated in terms of the propagation equation \eqref{sys:discrete}.
\begin{aspt}[Observable energy criterion]\label{ass:energy_obs}
\label{aspt:energydisc} There exists a  $\beta_h\in (0,1)$  and a  $K_h$, such that 
\[
|H\Psi_h(u)|^2+ \text{tr}(HR_h(u)H^T)\leq (1-\beta_h) |Hu|^2+K_h\;,
\]
for all $u \in \mathbb{R}^d$. Here $R_h$ again is the conditional covariance of the system noise in \eqref{eqn:Rh}. 
\end{aspt}
The objective of the current section is to show that this property is inherited by the ensemble and square root Kalman filters, so the observable energy of the ensembles, $\sum_k |H V^{(k)}_n|^2$ has uniformly bounded expectation in time. 

Under the assumption of full observations $H = I_d$, as made in \cite{FB07,KLS14}, it is clear that Assumption \ref{aspt:energydisc} is equivalent to Assumption \ref{aspt:kinetic}, hence the observable energy assumption is quite natural.  
When ${\rm{rank}}(H)=q=d$, the observable energy is actually equivalent to the standard kinetic energy, as  
 \[
 |v|=|(H^TH)^{-1}H^T\cdot Hv|\leq |(H^TH)^{-1}H^T||Hv|,\quad |Hv|\leq |H||v|.
 \]
 However, Assumption \ref{aspt:kinetic} does not in general imply Assumption \ref{aspt:energydisc}. To achieve this implication, one requires that the dissipation in Assumption \ref{aspt:kinetic} be `strong enough'. Section \ref{sec:range} will provide a detailed discussion of when and how Assumption \ref{aspt:energydisc} can be verified.

\subsection{Boundedness of the observable energy for EnKF}

The  advantage of the observable energy $|HU_n|^2$ over the kinetic energy $|U_n|^2$ is that it is preserved in the assimilation step \eqref{sys:EnKF}. To see this, left multiply the assimilation equation by $H$ and rearrange to obtain
\[
HV^{(k)}_{n}
=(I+H\widehat{C}_nH^T)^{-1}H\widehat{V}_n^{(k)}+H\widehat{C}_{n}H^T(I+H\widehat{C}_nH^T)^{-1}Z_n^{(k)}.
\]
The first term on the right can be bounded in terms of $HV^{(k)}_{n-1}$ using Assumption \ref{aspt:energydisc} and the second term can similarly be bounded in terms of $|H U_n|^2$ and an additive constant. This simple observation is the crux of the following result.

\begin{thm}
\label{thm:EnKFobs}
Assume that the signal process $U_n$ satisfies the observable energy criterion, Assumption \ref{aspt:energydisc}, and let $\{V_n^{(k)}\}_{k=1}^K$ be the EnKF ensemble process. Then 
\begin{enumerate}
\item 
There exist constants $D,M>0$ such that 
\begin{equ}\label{e:enkf_diss}
\mathbb{E}_{n-1}(|HV^{(k)}_{n}|^2+M|HU_n|^2)\leq (1-\tfrac{1}{2}\beta_h) (|HV^{(k)}_{n-1}|^2+M|HU_{n-1}|^2)+D
\end{equ}
for each $k =1 \dots K$ and uniformly in $n\geq 1$. In particular, the function 
\[
\mathcal{E}(U, V^{(1)},\ldots, V^{(K)})=\sum_{k=1}^K|HV^{(k)}_{n}|^2+KM|HU_n|^2 
\]
is a Lyapunov function for the Markov chain $(U_n, V^{(1)}_n,\ldots, V^{(K)}_n)$ and hence the signal-ensemble process is $\CE$-bounded. 
\item
When ${\rm{rank}}(H) = q = d$, $\mathcal{E}$ is a strong Lyapunov function and 
\begin{equ}
\BE |U_n|^2+ \sum_{k=1}^K \BE |V^{(k)}_n|^2
\end{equ}
is bounded uniformly in $n\geq 0$. The precise upper bound can be read off directly from \eqref{eqn:expectation} 
\item  Finally, all the claims above hold for any positive semi-definite choice of $\widehat{C}_n$, in particular any covariance inflation scheme from Section \ref{sec:covinfl} satisfies the same relation. 
\end{enumerate}
\end{thm}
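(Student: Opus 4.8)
The plan is to reduce everything to the identity obtained by left-multiplying the EnKF analysis update in \eqref{sys:EnKF} by $H$,
\[
HV^{(k)}_{n} \;=\; A_n\, H\widehat V^{(k)}_n \;+\; (I-A_n)\, Z^{(k)}_n, \qquad A_n := (I+G_n)^{-1},\quad G_n := H\widehat C_n H^T,
\]
where one checks $G_n(I+G_n)^{-1} = I-A_n$. Since $\widehat C_n$ is positive semi-definite, so is $G_n$, and $A_n$, $I-A_n$ are commuting symmetric matrices with $0 \preceq A_n \preceq I$. Working in an orthonormal eigenbasis of $G_n$ and applying the scalar inequality $(at+(1-a)s)^2 \le at^2+(1-a)s^2$ (valid for $a\in[0,1]$) coordinatewise gives the operator Jensen bound
\[
|HV^{(k)}_n|^2 \;\le\; \langle H\widehat V^{(k)}_n,\, A_n H\widehat V^{(k)}_n\rangle + \langle Z^{(k)}_n,\, (I-A_n) Z^{(k)}_n\rangle \;\le\; |H\widehat V^{(k)}_n|^2 + |Z^{(k)}_n|^2 .
\]
This is the one genuinely delicate point: a plain triangle inequality on the displayed identity would cost a factor of $2$ and destroy the contraction obtained below, whereas the two coefficients form a matrix convex combination, so no such loss occurs. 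Note also that the only property of $\widehat C_n$ used here is $\widehat C_n \succeq 0$, which is preserved by both inflation schemes of Section \ref{sec:covinfl} (replacing $\widehat C_n$ by $\widehat C_n + \lambda I$ or by $(1+\lambda)\widehat C_n$); hence part (iii) will be automatic once (i) and (ii) are in hand.

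Next I would take $\mathbb{E}_{n-1}$. For the forecast term, $H\widehat V^{(k)}_n = H\Psi_h(V^{(k)}_{n-1}) + H\zeta^{(k)}_n$ with $\zeta^{(k)}_n$ conditionally mean zero and conditional covariance $R_h(V^{(k)}_{n-1})$, so
\[
\mathbb{E}_{n-1}|H\widehat V^{(k)}_n|^2 = |H\Psi_h(V^{(k)}_{n-1})|^2 + \mathrm{tr}\!\big(HR_h(V^{(k)}_{n-1})H^T\big) \le (1-\beta_h)|HV^{(k)}_{n-1}|^2 + K_h
\]
by Assumption \ref{aspt:energydisc}. For the observation term, $Z^{(k)}_n = HU_n + \xi_n + \xi^{(k)}_n$, and since $\xi_n,\xi^{(k)}_n$ are mean zero, mutually independent, independent of $U_n$ given $\mathcal{F}_{n-1}$, and have covariance $I_q$, all cross terms drop and $\mathbb{E}_{n-1}|Z^{(k)}_n|^2 = \mathbb{E}_{n-1}|HU_n|^2 + 2q \le (1-\beta_h)|HU_{n-1}|^2 + K_h + 2q$, using Assumption \ref{aspt:energydisc} once more on the signal itself. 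Combining,
\[
\mathbb{E}_{n-1}|HV^{(k)}_n|^2 \le (1-\beta_h)|HV^{(k)}_{n-1}|^2 + (1-\beta_h)|HU_{n-1}|^2 + (2K_h + 2q).
\]

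To close the estimate I would add $M|HU_n|^2$ to the left side (with $M$ free), bound $\mathbb{E}_{n-1}|HU_n|^2$ by Assumption \ref{aspt:energydisc} again, and choose $M := 2(1-\beta_h)/\beta_h$, so that $(1-\beta_h)(M+1) = (1-\tfrac12\beta_h)M$ exactly; this yields \eqref{e:enkf_diss} with $D := (M+2)K_h + 2q$. Summing \eqref{e:enkf_diss} over $k = 1,\dots,K$ then shows that $\mathcal{E} = \sum_k |HV^{(k)}|^2 + KM|HU|^2$ satisfies \eqref{eqn:timediscreteLyap} with parameters $\beta_h/2$ and $KD$, which is part (i). For part (ii), when $\mathrm{rank}(H)=q=d$ the map $u\mapsto|Hu|$ is a norm equivalent to the Euclidean one, with $|u| \le \|(H^TH)^{-1}H^T\|\,|Hu|$; hence the sublevel sets of $\mathcal{E}$ are compact, $\mathcal{E}$ is a strong Lyapunov function, and $|U_n|^2 + \sum_k|V^{(k)}_n|^2 \le \|(H^TH)^{-1}H^T\|^2\big(1 + (KM)^{-1}\big)\,\mathcal{E}(U_n,V^{(1)}_n,\dots,V^{(K)}_n)$, so the uniform-in-$n$ bound follows from \eqref{eqn:expectation} applied to $\mathcal{E}$. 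Part (iii) requires nothing further, by the remark in the first paragraph. The main obstacle is the operator Jensen step; everything downstream is bookkeeping and the choice of $M$.
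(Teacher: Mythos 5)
Your proof is correct, and the overall architecture is the same as the paper's: left-multiply the analysis update by $H$, recognize that the two matrix coefficients sum to the identity, bound the forecast term via Assumption \ref{aspt:energydisc} applied to $V^{(k)}_{n-1}$, bound the perturbed observation via Assumption \ref{aspt:energydisc} applied to the signal, then absorb the $|HU_{n-1}|^2$ contribution by adding $M|HU_n|^2$ with a suitable $M$. The one place you genuinely diverge is the treatment of the cross term. The paper uses Lemma \ref{lem:inver} (both operator coefficients have norm at most $1$) together with the weighted Young inequality of Lemma \ref{lem:young}, which costs factors $(1+\tfrac12\beta_h)$ and $(1+2\beta_h^{-1})$ and then recovers the contraction from $(1-\beta_h)(1+\tfrac12\beta_h)<1-\tfrac12\beta_h$. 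You instead exploit that $A_n=(I+H\Chat_nH^T)^{-1}$ and $I-A_n$ are simultaneously diagonalizable with eigenvalues forming convex weights, and apply scalar convexity in the eigenbasis to get $|HV^{(k)}_n|^2\le |H\Vhat^{(k)}_n|^2+|Z^{(k)}_n|^2$ with no loss. This is sharper and lets you take $M=2(1-\beta_h)/\beta_h$ with equality in the absorption step; your remark that a plain triangle inequality "would destroy the contraction" slightly oversells its necessity, since the paper's weighted Young route also closes the estimate, just with worse $\beta_h$-dependent constants. Two small bookkeeping notes: your exact identity $\mathbb{E}_{n-1}|Z^{(k)}_n|^2=\mathbb{E}_{n-1}|HU_n|^2+2q$ uses that $\xi_n$ is conditionally uncorrelated with $U_n$ given $\mathcal{F}_{n-1}$ (standard for i.i.d. observation noise, and the paper avoids even this by using Young's inequality there too); and both the convexity step and part (iii) correctly rest only on $\Chat_n\succeq 0$, matching the paper's closing observation.
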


\begin{proof}
Left multiply the first equation of \eqref{sys:EnKF} with $H$, 
\begin{align}
\notag
HV^{(k)}_{n}&=H\widehat{V}^{(k)}_{n}-H\widehat{C}_{n}H^T(I+H\widehat{C}_nH^T)^{-1}(H \widehat{V}^{(k)}_n-Z^{(k)}_{n})\\
\label{tmp:EnKFv}
&=(I+H\widehat{C}_nH^T)^{-1}H\widehat{V}_n^{(k)}+H\widehat{C}_{n}H^T(I+H\widehat{C}_nH^T)^{-1}Z_n^{(k)}.
\end{align}
Then based on elementary Lemma \ref{lem:inver} and Young's inequality, Lemma \ref{lem:young}
\[
|HV^{(k)}_{n}|^2\leq (1+\tfrac{1}{2}\beta_h)|H\widehat{V}_n^{(k)}|^2+(1+2\beta_h^{-1})|\widehat{Z}^{(k)}_n|^2. 
\]
Using Assumption \ref{aspt:energydisc} and the conditional independence of $\zeta_n^{(k)}$
\[
\mathbb{E}_{n-1}(|H\widehat{V}_n^{(k)}|^2)=
\mathbb{E}_{n-1}(|H\Psi_h(V_{n-1}^{(k)})+H\zeta_{n-1}^{(k)}|^2)\leq (1-\beta_h)|HV^{(k)}_{n-1}|^2+K_h\;.
\]
Furthermore, by Young's inequality
\[
\mathbb{E}_{n-1}(|\widehat{Z}_n^{(k)}|^2)=
\mathbb{E}_{n-1}(|HU_n+\xi_n+\xi^{(k)}_n|^2)\leq
2\mathbb{E}_{n-1}(|HU_n|^2)+4q\leq 
2|HU_{n-1}|^2+2(K_h+2q).
\]
Combining these inequalities and using $(1-\beta_h)(1+\frac{1}{2}\beta_h)<(1-\frac{1}{2}\beta_h)$ we have
\[
\mathbb{E}_{n-1}|HV^{(k)}_{n}|^2\leq (1-\tfrac{1}{2}\beta_h)|H V_{n-1}^{(k)}|^2+(2+4\beta_h^{-1})|HU_{n-1}|^2+(1+\tfrac{1}{2}\beta_h)K_h+2(1+2\beta_h^{-1})(K_h+2q). 
\]
On the other hand, by Assumption \ref{aspt:energydisc}, for any $M> 0$ we have 
\[
M\mathbb{E}_{n-1}|HU_n|^2\leq M(1-\beta_h) |HU_{n-1}|^2+MK_h. 
\]
Hence, by fixing $M$ such that $\frac{1}{2}\beta_hM>(2+4\beta_h^{-1})$ and adding the previous two inequalities, we see that we can always find a constant $D$ such that
\[
\mathbb{E}_{n-1}(|HV^{(k)}_{n}|^2+M|HU_n|^2)
\leq (1-\tfrac{1}{2}\beta_h)(|HV^{(k)}_{n-1}|^2+M|HU_{n-1}|^2)+D. 
\] 
This completes the proof of the first claim. 
The second claim is simply summing the result of the first claim over all $k$. And when $H$ is of rank $d$, the observable energy $|Hv|^2$ is equivalent to the square energy $|v|^2$
\[
|v|=|(H^TH)^{-1}H^T\cdot Hv|\leq |(H^TH)^{-1}H^T||Hv|.
\]
Finally, notice that we have not used any properties of $\widehat{C}_n$ other than it is positive semi-definite.
\end{proof}

\subsection{Boundedness of the observable energy for ESRF}
The boundedness of ESRF ensembles is not too different from EnKF, since the assimilation step for mean \eqref{sys:ESRFmean} is similar to the assimilation step of EnKF, while the posterior ensemble spread in the observable space can be bounded a.s.
\begin{thm}
\label{thm:EAKF}
Assume that the signal process $U_n$ satisfies the observable energy criterion, Assumption \ref{aspt:energydisc}, and let $\{V_n^{(k)}\}_{k=1}^K$ denote either the EAKF or ETKF ensemble. Then
\begin{enumerate}
\item The observable posterior covariance $HC_nH^T\preceq I_d$ a.s., where 
\[
C_n:=\frac{1}{K}\sum_{k=1}^K (V^{(k)}_n-\overline{V}_n)\otimes (V^{(k)}_n-\overline{V}_n).
\]
\item There exist constants $D, M>0$ such that the ensemble mean  $\overline{V}_n=\frac{1}{K}\sum^K_{k=1}V^{(k)}_n$ satisfies 
\begin{equ}\label{e:ESRF_diss}
\mathbb{E}_{n-1}(|H\overline{V}_n|^2+M|U_n|^2)
\leq (1-\tfrac{1}{2}\beta_h)(|H\overline{V}_{n-1}|^2+M|HU_{n-1}|^2)+D\;,
\end{equ}
for all integers $n \geq 2$. In particular, the function
\[
\mathcal{E}(U, V^{(1)},\ldots, V^{(K)})=\sum_{k=1}^K|HV^{(k)}_{n}|^2+KM|HU_n|^2.
\]
is a Lyapunov function for the signal-ensemble process $(U_n, V^{(1)}_n,\ldots, V^{(K)}_n)$ and hence the process is $\CE$-bounded. 
\item 
When ${\rm{rank}}(H) =d$, $\CE$ is a strong Lyapunov function and 
\[
\BE |U_n|^2+ \sum_{k=1}^K \BE |V^{(k)}_n|^2
\]
is bounded uniformly in $n\geq 0$. The precise bound can be read off directly from \eqref{eqn:expectation}. 
\item Again the claims above hold for any choice of positive semi-definite covariance matrix $\widehat{C}_n$, in particular the uniform covariance inflation scheme in Section \ref{sec:covinfl}.
\end{enumerate}

\end{thm}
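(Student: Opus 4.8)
The plan is to reuse the architecture of the proof of Theorem~\ref{thm:EnKFobs}, with the ensemble mean $\overline{V}_n$ playing the role that each member $V^{(k)}_n$ played there, and to add the one ingredient that is genuinely new for square root filters: an almost sure contraction bound on the observable posterior spread. I would treat the four claims in the order (i), (ii), (iii)--(iv). For (i): by construction the posterior covariance satisfies the Kalman update \eqref{eqn:postprior}; conjugating that identity by $H$ and writing $G:=H\widehat{C}_nH^T\succeq 0$ gives $HC_nH^T=G-G(I+G)^{-1}G=(I+G)^{-1}G$, up to the harmless factor $\tfrac{K-1}{K}$ coming from the $1/K$ normalisation used in the statement. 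Since $(I+G)^{-1}G$ is symmetric positive semidefinite with eigenvalues $\lambda/(1+\lambda)<1$, this yields $HC_nH^T\preceq I$ a.s., hence $\text{tr}(HC_nH^T)\le q$ a.s.; note only $\widehat{C}_n\succeq 0$ was used.

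For (ii), I would first left multiply the mean update \eqref{sys:ESRFmean} by $H$ and rearrange exactly as in the proof of Theorem~\ref{thm:EnKFobs} to get $H\overline{V}_n=(I+G)^{-1}H\overline{\widehat{V}}_n+G(I+G)^{-1}Z_n$, so that Lemma~\ref{lem:inver} and Young's inequality (Lemma~\ref{lem:young}) give $|H\overline{V}_n|^2\le(1+\tfrac12\beta_h)|H\overline{\widehat{V}}_n|^2+(1+2\beta_h^{-1})|Z_n|^2$. The new estimate needed is the bound on $\mathbb{E}_{n-1}|H\overline{\widehat{V}}_n|^2$: writing $H\overline{\widehat{V}}_n=\tfrac1K\sum_k H\Psi_h(V^{(k)}_{n-1})+\tfrac1K\sum_k H\zeta^{(k)}_n$, I would apply Jensen to the first convex average, use conditional independence and mean zero of the $\zeta^{(k)}_n$ on the second (picking up $\tfrac1{K^2}\sum_k\text{tr}(HR_h(V^{(k)}_{n-1})H^T)$), and then Assumption~\ref{aspt:energydisc} member by member, to obtain $\mathbb{E}_{n-1}|H\overline{\widehat{V}}_n|^2\le(1-\beta_h)\tfrac1K\sum_k|HV^{(k)}_{n-1}|^2+K_h$. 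The key point is then to split $\tfrac1K\sum_k|HV^{(k)}_{n-1}|^2=|H\overline{V}_{n-1}|^2+\text{tr}(HC_{n-1}H^T)$ and bound the trace by $q$ via claim (i); this is the only place an analysis step is needed to have already occurred, which is exactly why \eqref{e:ESRF_diss} is claimed only for $n\ge 2$. Combining with $\mathbb{E}_{n-1}|Z_n|^2\le 2|HU_{n-1}|^2+2(K_h+q)$ (Assumption~\ref{aspt:energydisc} for $U_n$), using $(1+\tfrac12\beta_h)(1-\beta_h)<1-\tfrac12\beta_h$, and adding $M$ times the observable energy inequality for $U_n$ with $M$ large enough that $\tfrac12\beta_h M$ beats the coefficient of $|HU_{n-1}|^2$ produced above, gives \eqref{e:ESRF_diss}. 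To pass from the mean to the stated Lyapunov function $\CE$, I would use claim (i) once more: $\sum_k|HV^{(k)}_n|^2=K|H\overline{V}_n|^2+K\,\text{tr}(HC_nH^T)$ with the last term in $[0,Kq]$, so $\CE$ is sandwiched between $K(|H\overline{V}_n|^2+M|HU_n|^2)$ and that same quantity plus $Kq$; feeding \eqref{e:ESRF_diss} through this two-sided comparison, and folding the single time $n=1$ into the additive constant using the finite second moment of the initial ensemble (preserved under one iteration), yields $\mathbb{E}_{n-1}\CE\le(1-\tfrac12\beta_h)\CE+D'$, i.e.\ $\CE$ is a Lyapunov function and the chain is $\CE$-bounded.

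Claims (iii) and (iv) should then follow just as in Theorem~\ref{thm:EnKFobs}: when $\text{rank}(H)=d$, the inequality $|v|\le|(H^TH)^{-1}H^T|\,|Hv|$ makes $|H\cdot|^2$ equivalent to $|\cdot|^2$, so $\CE$ has compact sublevel sets (a strong Lyapunov function) and the uniform bound on $\BE|U_n|^2+\sum_k\BE|V^{(k)}_n|^2$ is read off from \eqref{eqn:expectation}; and since $\widehat{C}_n$ entered only through $\widehat{C}_n\succeq 0$, the replacement $\widehat{C}_n\mapsto(1+\lambda)\widehat{C}_n$ (uniform inflation, the only inflation defined for square root filters in Section~\ref{sec:covinfl}) leaves every estimate intact.

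The step I expect to be the main obstacle is the mean recursion in claim (ii). Because $\overline{V}_n$ is an average of nonlinearly propagated members, its observable energy recursion unavoidably involves $\tfrac1K\sum_k|HV^{(k)}_{n-1}|^2$ rather than $|H\overline{V}_{n-1}|^2$ alone; closing the loop therefore forces one to control the posterior ensemble spread in observation space, which is precisely claim (i), and to accept the minor but genuine bookkeeping that this control is only available once an analysis step has taken place --- hence the restriction $n\ge 2$ and the need to treat $n=1$ by hand. Everything else is a routine re-run of the EnKF argument.
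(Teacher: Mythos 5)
Your proposal is correct and follows essentially the same route as the paper's proof: claim (i) via the Kalman update conjugated by $H$ and Lemma \ref{lem:inver}, claim (ii) by rerunning the EnKF mean recursion with Jensen's inequality plus the bias--variance identity $\tfrac1K\sum_k|HV^{(k)}|^2=|H\overline{V}|^2+\mathrm{tr}(HC H^T)$ and the trace bound from (i), and claims (iii)--(iv) exactly as in Theorem \ref{thm:EnKFobs}. The only (harmless) deviations are that you use the exact bias--variance identity in both directions where the paper invokes Young's inequality for the upper comparison, and that you make explicit the $n\geq 2$ bookkeeping and the $(K-1)/K$ normalisation, which the paper leaves implicit.
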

\begin{proof}
From the definition of $C_n$ in both ESRF methods, we have that
\begin{align*}
HC_nH^T&=H\widehat{C}_nH^T-H\widehat{C}_nH^T(H^T\widehat{C}_nH+I)^{-1}H\widehat{C}_nH^T\\
&=H\widehat{C}_nH^T(H^T\widehat{C}_nH+I)^{-1}(I+H\widehat{C}_nH)-H\widehat{C}_nH^T(H^T\widehat{C}_nH+I)^{-1}H\widehat{C}_nH^T\\
&=H\widehat{C}_nH^T(H\widehat{C}_nH^T+I)^{-1}. 
\end{align*}
By Lemma \ref{lem:inver}, we clearly have $0\preceq HC_nH^T\preceq I_d$. As a consequence,
\begin{equation}
\label{tmp:sqmean}
\sum_{k=1}^K |HV^{(k)}_n|^2=\text{tr} (HC_n H^T)+K |H\overline{V}_n|^2
\leq K |H\overline{V}_n|^2+d. 
\end{equation}
The inequality \eqref{e:ESRF_diss} follows almost identically to the proof of \eqref{e:enkf_diss}. 
Indeed, the ensemble mean assimilation step \eqref{sys:ESRFmean} implies that  
\begin{align*}
H\overline{V}_n&=H\overline{\widehat{V}_n}-H\widehat{C}_{n}H^T(I+H\widehat{C}_nH^T)^{-1}(H \overline{\widehat{V}}_n-Z_{n})\\
&=(I+H\widehat{C}_nH^T)^{-1}H\overline{\widehat{V}_n}+H\widehat{C}_{n}H^T(I+H\widehat{C}_{n}H^T)^{-1}Z_n. 
\end{align*}
The only difference between this and the proof of \eqref{e:enkf_diss} is that we need to bound $\mathbb{E}_{n-1}|H\overline{\widehat{V}_n}|^2$, but by Jensen's inequality and \eqref{tmp:sqmean} we have
\[
\mathbb{E}_{n-1}|H\overline{\widehat{V}_n}|^2\leq \frac{1}{K}\sum_{k-1}^K\mathbb{E}_{n-1}|H\widehat{V}^{(k)}_n|^2
\leq \frac{1-\beta_h}{K}\sum_{k=1}^K |HV^{(k)}_{n-1}|^2+K_h\leq (1-\beta_h) |H\overline{V}_{n-1}|^2+d+K_h.
\]
Therefore the argument after \eqref{tmp:EnKFv} applies to the process $\overline{V}_n$ verbatim. 

To show that $\mathcal{E}$ is a Lyapunov function, it suffices to apply Young's inequality 
\[
|HV^{(k)}_n|^2\leq (1+\tfrac{1}{4}\beta_h)|H\overline{V}_n|^2+(1+4\beta_h^{-1})|H\overline{V}^{(k)}_n-H\overline{V}_n|^2,
\]
and also see that 
\[
\sum_{k=1}^K|HV^{(k)}_n-H\overline{V}_n|^2=\text{tr}(S_n^TH^THS_n)
=\text{tr}(HS_n S_n^TH^T)=\text{tr}(HC_nH^T)\leq \text{tr}(I_q)\leq q,
\]
where $S_n$ is the posterior spread matrix, which is given by $T_n\Shat_n$ or $\Shat_nA_n$ as in \eqref{sys:ESRFspread}. 
Therefore 
\begin{align*}
\mathbb{E}_{n-1}\mathcal{E}(U_n, V^{(1)}_n,\ldots, V^{(K)}_n)
&\leq (1+\tfrac{1}{4}\beta_h)K\mathbb{E}_{n-1}(|H\overline{V}_n|^2+M|HU_n|^2)+(1+4\beta_h^{-1})q\\
&\leq (1-\tfrac{1}{4}\beta_h)K(|H\overline{V}_{n-1}|^2+M|HU_{n-1}|^2)+KD+(1+4\beta_h^{-1})q\\
&\leq (1-\tfrac{1}{4}\beta_h)(\sum_{k=1}^K|HV^{(k)}_{n-1}|^2+KM|HU_{n-1}|^2)+KD+(1+4\beta_h^{-1})q\\
&=(1-\tfrac{1}{4}\beta_h)\mathcal{E}(U_{n-1}, V^{(1)}_{n-1},\ldots, V^{(K)}_{n-1})+KD+(1+4\beta_h^{-1})q.
\end{align*}
Here we applied Jensen's inequality inequality in the penultimate step. The proofs for the second two claims are identical to 
Theorem \ref{thm:EnKFobs}. 
\end{proof}

\section{Validity of the observable energy criterion}
\label{sec:range}
In Section \ref{sec:stabobs}, we have established a series of uniform boundeness results based on the observable energy criterion, Assumption \ref{aspt:energydisc}. This criterion is different from the usual energy principle for dynamical systems, Assumption \ref{aspt:kinetic}, although they share similar formulations. In this section, we will demonstrate a few sufficient conditions that lead to Assumption \ref{aspt:kinetic} when the observation is of full rank, and discuss a few concrete examples where the observation is rank deficient and Assumption \ref{aspt:kinetic} still holds. 

\subsection{Full rank observation}
As stated earlier, even when $H$ is of full rank, Assumption \ref{ass:energy} does not necessarily imply Assumption \ref{ass:energy_obs}. Indeed, even though the norms $|\cdot|^2$ and $|H\cdot|^2$ are equivalent, the constants of proportionality may preclude the dissipation relation of Assumption \ref{ass:energy_obs}. In a separate work \cite{KMT15}, the authors have constructed a concrete nonlinear system which satisfies the kinetic energy principle, Assumption \ref{aspt:kinetic}, but not  the observable energy principle, Assumption \ref{aspt:energydisc}, and which exhibits catastrophic filter divergence \cite{MH08, GM13} with large probability. This indicates the importance of verifying the observable energy criterion over the typical energy criterion. 
\par
If the condition number of the matrix $H$ is small enough, a kinetic energy principle implies an observable energy principle. To be specific, define the condition number 
%
%
%
\[
	\mathcal{C}_H:=\max \{|Hu||v|: |u|=1, |Hv|=1\},
\]
which must be finite since $H$ is of full rank. Then we have the following. 
\begin{thm} If the kinetic energy principle, Assumption \ref{ass:energy}, holds, then 
\label{thm:strong}
\[
|H\Psi_h(u)|^2+\text{tr}(HR_h(u)H^T)\leq (1-\beta_h)\mathcal{C}^2_H |Hu|^2+|H|^2 K_h
\]
where $\mathcal{C}_H$ is the condition number of matrix $H$. In particular, if $(1-\beta_h)\mathcal{C}^2_H<1$, then the observable energy criterion, Assumption \ref{aspt:energydisc}, also holds; therefore the average square norms of the ensemble members for EnKF and ESQF are bounded uniformly in time as a consequence of Theorems \ref{thm:EnKFobs} and \ref{thm:EAKF}. 
\end{thm}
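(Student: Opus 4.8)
The plan is to derive the displayed inequality directly from the kinetic energy principle by factoring the matrix $H$ out of every term with operator-norm bounds, and then converting the leftover $|u|^2$ back into $|Hu|^2$ using invertibility of $H$. The first ingredient is the identity $\mathcal{C}_H=|H|\,|H^{-1}|$: the constraint defining $\mathcal{C}_H$ decouples into a maximum over $\{|u|=1\}$ and an independent maximum over $\{|Hv|=1\}$, the former giving $|H|$ and the latter, after the substitution $w=Hv$ (legitimate since $H$ is of full rank, hence invertible, so $v=H^{-1}w$ and $w$ ranges over the unit sphere), giving $|H^{-1}|$; in particular $\mathcal{C}_H<\infty$.

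Next I would bound the two terms on the left-hand side separately. For the deterministic part, $|H\Psi_h(u)|\le |H|\,|\Psi_h(u)|$ gives $|H\Psi_h(u)|^2\le |H|^2|\Psi_h(u)|^2$. For the noise part, write the covariance in spectral form $R_h(u)=\sum_i \lambda_i\, w_i\otimes w_i$ with $\lambda_i\ge 0$ (it is a covariance, hence positive semidefinite); then $\text{tr}(HR_h(u)H^T)=\sum_i\lambda_i|Hw_i|^2\le |H|^2\sum_i\lambda_i|w_i|^2=|H|^2\,\text{tr}(R_h(u))$. Adding the two estimates and invoking Assumption \ref{ass:energy},
\[
|H\Psi_h(u)|^2+\text{tr}(HR_h(u)H^T)\le |H|^2\big(|\Psi_h(u)|^2+\text{tr}(R_h(u))\big)\le |H|^2\big((1-\beta_h)|u|^2+K_h\big).
\]
Finally, $|u|=|H^{-1}Hu|\le |H^{-1}|\,|Hu|$ gives $|u|^2\le |H^{-1}|^2|Hu|^2$, and since $|H|^2|H^{-1}|^2=\mathcal{C}_H^2$ the right-hand side is at most $(1-\beta_h)\mathcal{C}_H^2|Hu|^2+|H|^2K_h$, which is the asserted inequality.

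For the ``in particular'' clause, when $(1-\beta_h)\mathcal{C}_H^2<1$ I would set $\widetilde\beta_h:=1-(1-\beta_h)\mathcal{C}_H^2\in(0,1)$ and $\widetilde K_h:=|H|^2K_h$; the inequality just proved is precisely Assumption \ref{aspt:energydisc} with the constants $\widetilde\beta_h,\widetilde K_h$, so Theorems \ref{thm:EnKFobs} and \ref{thm:EAKF} apply verbatim and deliver the claimed time-uniform bounds on the ensemble members. I do not anticipate a real obstacle here: the argument is a short chain of operator-norm inequalities, and the only two points needing a little care are the identification $\mathcal{C}_H=|H|\,|H^{-1}|$ (which rests on $H$ being invertible, i.e.\ on reducing to $q=d$ as in Remark \ref{rmk:svd}) and the trace inequality $\text{tr}(HR_hH^T)\le|H|^2\,\text{tr}(R_h)$ (which uses positive semidefiniteness of $R_h$); both are routine.
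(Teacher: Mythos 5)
Your proof is correct and follows essentially the same route as the paper's: factor $|H|^2$ out of both terms (the paper does the trace bound implicitly, you justify it via the spectral decomposition of $R_h$), apply the kinetic energy principle, and then convert $|H|^2|u|^2$ into $\mathcal{C}_H^2|Hu|^2$ using the invertibility of $H$. The extra details you supply --- the identification $\mathcal{C}_H=|H|\,|H^{-1}|$ and the explicit choice of constants $\widetilde\beta_h,\widetilde K_h$ for Assumption \ref{aspt:energydisc} --- are all sound.
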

\begin{proof}
By  Assumption \ref{aspt:kinetic} 
\begin{align*}
|H\Psi_h(u)|^2+\text{tr}(HR_h(u)H^T)&\leq |H|^2[|\Psi_h(u)|^2+\text{tr}(R_h(u))]\\
&\leq (1-\beta_h)|H|^2 |u|^2+ |H|^2 K_h,\\
&\leq (1-\beta_h)\mathcal{C}^2_H |Hu|^2+|H|^2 K_h. 
\end{align*}
\end{proof}
Another situation in which a dissipation criterion implies Assumption \ref{ass:energy_obs} is when the dissipation is of a higher polynomial order. This is an immediate consequence of Theorem \ref{thm:strong}. 
%
\begin{cor} \label{cor:higher_diss} 
Suppose that the stochastic system $U_{n}$ is generated through the solution of the SDE
\[
du_t=\psi(u_t)dt+\Sigma dW_t,
\]
as $U_{n}=u_{nh}$, while for some $\delta,K>0$
\[
\langle u, \psi(u)\rangle \leq -\delta |u|^{1+\delta}+K.
\]  Then the kinetic energy principle, Assumption \ref{aspt:kinetic}, holds with any fixed $\beta_h\in (0,1)$. By Theorem \ref{thm:strong}, if $\text{rank}(H) = d$, then the expected square norms of the ensemble members for EnKF and ESQF are bounded uniformly in time.
 \end{cor}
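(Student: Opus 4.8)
The plan is to prove the strong continuous-time drift bound \eqref{eqn:cts} with an \emph{arbitrarily large} contraction rate, pass to discrete time via the implication discussed right after \eqref{eqn:cts} (see also \ref{sec:discrete}), and then close with Theorem \ref{thm:strong}. First I would combine the drift identity $\mathcal{L}|u|^2=\psi(u)\cdot u+\tfrac12\mathrm{tr}(\Sigma\Sigma^T)$ of \eqref{eqn:cts} with the present hypothesis $\langle u,\psi(u)\rangle\le -\delta|u|^{1+\delta}+K$ to get $\mathcal{L}|u|^2\le -\delta|u|^{1+\delta}+C$, where $C$ absorbs $K$ and $\mathrm{tr}(\Sigma\Sigma^T)$. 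The crux is then an elementary fact: because the dissipation exponent $1+\delta$ is strictly larger than $2$, for \emph{every} $\beta>0$ Young's inequality furnishes a finite constant $K'(\beta)$ with $\delta|u|^{1+\delta}\ge \beta|u|^2-K'(\beta)$ for all $u\in\reals^d$. Substituting, one obtains, simultaneously for all $\beta>0$,
\[
\mathcal{L}|u|^2\le -\beta|u|^2+\bigl(C+K'(\beta)\bigr).
\]

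Next I would invoke the continuous-to-discrete implication recorded after \eqref{eqn:cts}: the displayed bound yields the kinetic energy principle, Assumption \ref{aspt:kinetic}, with parameters $\beta_h=1-e^{-\beta h}$ and a corresponding $K_h$. Since $\beta>0$ is free and $\beta\mapsto 1-e^{-\beta h}$ is a bijection of $(0,\infty)$ onto $(0,1)$, for any prescribed target $\beta_h^\star\in(0,1)$ we may set $\beta=-h^{-1}\log(1-\beta_h^\star)$ and conclude that Assumption \ref{aspt:kinetic} holds with that value. This establishes the first assertion of the corollary.

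Finally, assuming $\mathrm{rank}(H)=d$, the condition number $\mathcal{C}_H$ is finite, so by the previous paragraph we may choose $\beta_h\in\bigl(1-\mathcal{C}_H^{-2},\,1\bigr)$; then $(1-\beta_h)\mathcal{C}_H^2<1$, and Theorem \ref{thm:strong} delivers the observable energy criterion, Assumption \ref{aspt:energydisc}. The rank-$d$ conclusions of Theorems \ref{thm:EnKFobs} and \ref{thm:EAKF} then give the uniform-in-time bound on $\mathbb{E}|U_n|^2+\sum_{k=1}^K\mathbb{E}|V^{(k)}_n|^2$ for EnKF and the ensemble square root filters, completing the proof.

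I expect no genuine technical difficulty; the single point worth isolating is the step above — that a dissipation of polynomial order exceeding $2$ can be ``traded down'' to an exponential contraction of arbitrarily large rate $\beta$ over the fixed lag $h$, a flexibility that ordinary $\beta$-fixed dissipativity does not enjoy. Everything after that is bookkeeping with results already established.
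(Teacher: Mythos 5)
Your proposal is correct in substance and follows essentially the same route as the paper: establish the continuous-time drift bound $\mathcal{L}|u|^2\le-\alpha|u|^2+D$ for \emph{every} $\alpha>0$, pass to discrete time via Gr\"{o}nwall/Dynkin (equivalently, the implication recorded after \eqref{eqn:cts}), set $\alpha=-h^{-1}\log(1-\beta_h)$, and finish with Theorem \ref{thm:strong}. One caveat on the step you isolate as the crux: the inequality $\delta|u|^{1+\delta}\ge\beta|u|^2-K'(\beta)$ for all $\beta>0$ requires the exponent $1+\delta$ to strictly exceed $2$, i.e.\ $\delta>1$, which does not follow from the stated hypothesis $\delta>0$; for $\delta\le 1$ no such $K'(\beta)$ exists for large $\beta$. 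The paper's own proof makes the identical implicit assumption (its appeal to H\"{o}lder's inequality needs $1+\delta>2$ as well), so this is a defect of the statement rather than of your argument, but you should not present $1+\delta>2$ as automatic.
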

\begin{proof}
Since
\[
\mathcal{L} |u_t|^2=2\langle u_t, \psi(u_t)\rangle+\text{tr}(\Sigma\Sigma^T)
\leq -2\delta|u|^{1+\delta}+ 2K+\text{tr}(\Sigma\Sigma^T).
\]
By H\"{o}lder's inequality, for any $\alpha>0$, there is a constant $D$ such that
\[
\mathcal{L}|u_t|^2\leq -\alpha |u_t|^2+D.
\]
By Gr\"{o}nwall's inequality and Dynkin's formula we obtain that
\[
\mathbb{E} |U_1|^2=\mathbb{E} |u_h|^2\leq e^{-\alpha h} |u_0|^2+D\alpha^{-1}=e^{-\alpha h} |U_0|^2+D\alpha^{-1}. 
\]
So letting $\alpha=-\ln(1-\beta_h) h^{-1}$ we conclude our proof. 
\end{proof} 
The higher order dissipation described in Corollary \ref{cor:higher_diss} has been used in many stochastic turbulence models to obtain better stability, like  the canonical scalar model with cubic nonlinearity \cite{MFC09, MB12} and  the conceptual dynamical model for turbulence in \cite{ML14}. Moreover, this corollary indicates that {in the full rank case} EnKF can be stabilized if we are willing to filter with a model error that stabilizes the system. For example instead of running the EnKF with vector field $\psi$ of Lorenz 63 or 96, we can run EnKF with the altered system $\tilde{\psi}=\psi-\lambda|u|u$, with any strictly positive $\lambda$, then by the above results the filter will have bounded observable energy on an infinite time horizon.  
\par
We now address the situation where an energy principle holds for a linearly translated version of the typical energy. That is
\[
\mathbb{E}_{n-1}|HU_n-u^*|^2\leq \beta |HU_{n-1}-u^*|^2+K 
\]
for some fixed $u^* \in \reals^q$. Then by Young's inequality  Lemma \ref{lem:young}
\begin{align}
\notag
\mathbb{E}_{n-1}|HU_n|^2&\leq 
(1+\epsilon)\mathbb{E}_{n-1}|HU_n-u^*|^2+(1+\epsilon^{-1})|u^*|^2\\
\notag
&\leq (1+\epsilon)\{\beta |HU_{n-1}-u^*|^2+K\}+(1+\epsilon^{-1})|u^*|^2\\
\label{tmp:linear}
&\leq (1+\epsilon)^2 \beta |HU_{n-1}|^2+ (1+\epsilon^{-1})^2 (K+2|u^*|^2). 
\end{align}
Therefore our framework can be applied to the Lorenz 63 system, Example \ref{exp:lorenz63}, as long as $H/\text{diag}\{r,\sigma,\sigma\}$ satisfies the condition number requirement of Theorem \ref{thm:strong}. 
\par

\subsection{Observation with rank deficiency}
In the case where $H$ is rank deficient we do not expect the observable energy criterion to hold for broad classes of models. This constraint is in a sense not surprising, since EnKF and ESQF with sparse observations can potentially diverge to machine infinity, in a well documented phenomena known as catastrophic filter divergence \cite{MH08, MH12, GM13}. Nevertheless, under certain scenarios one can verify Assumption \ref{aspt:energydisc} through explicit calculation. We will now verify the observable energy criterion for two concrete examples. 

\begin{example}
Consider a stable linear dynamical system given by $U_n=AU_{n-1}+\zeta_n$, where $A$ produces a contraction,
\[
|Au|\leq (1-\beta)|u|\quad \text{ for all }u,
\]
with a $\beta\in (0,1)$. For simplicity, we assume that $\zeta_n$ are i.i.d. random variables with mean $0$ and covariance $R$. Suppose that $H$ commutes with $A$, $AH=HA$, then  
 \begin{align*}
\mathbb{E}(|HU_n|^2|U_{n-1}=u)&=|HAu|^2+\mathbb{E}|H\zeta_n|^2\\
&=|AHu|^2+\text{tr}(HRH^T)\\
&\leq (1-\beta)^2|Hu|^2+\text{tr}(HRH^T). 
 \end{align*}
Hence Assumption \ref{aspt:energydisc} holds. 
\end{example}

\begin{example}
Recall  the Lorenz 63 model, Example \ref{exp:lorenz63}, where $U_n$ is given by $u_{nh}=(x_{nh}, y_{nh},z_{nh})$, where the dynamics of $u_t$ is specified by 
\[
\frac{d}{dt}x_t=\sigma(y_t-x_t),\quad \frac{d}{dt}y_t=x_t(r-z_t)-y_t,\quad \frac{d}{dt}z_t=x_ty_t-b z_t. 
\]
Suppose that we have direct noisy observations of the latter two coordinates, that is  $H={\text{diag}} \{0,1,1\}$. We can define the linearly translated observable energy as $\mathcal{E}_H(u_t)=y_t^2+(z_t-r)^2$. Direct computation yields
\[
\frac{d}{dt}\mathcal{E}_H(u_t)=-2y_t^2-2bz^2_t+2br z_t\leq -\gamma \mathcal{E}_H(u_t)+r^2,
\]
where $\gamma=\min\{2,b\}$. Therefore by Gr\"{o}nwall's inequality, 
\[\mathcal{E}_H(u_{t+h})\leq e^{-\gamma h}\mathcal{E}_H(u_{t})+hr^2.
\] 
Following our manipulation for linearly translated energy, \eqref{tmp:linear}, we can conclude that Assumption \ref{aspt:energydisc} holds by taking a sufficiently small $\epsilon\leq 1$ in the following application of Lemma \ref{lem:young}
\begin{align*}
|HU_n|^2=y^2_{nh}+z^2_{nh}&\leq (1+\epsilon)\mathcal{E}_H(u_{nh})+(1+\epsilon^{-1})r^2\\
&\leq (1+\epsilon)e^{-\gamma t}\mathcal{E}_H(u_{(n-1)h})+(1+\epsilon)hr^2+(1+\epsilon^{-1})r^2\\
&\leq (1+\epsilon)^2 e^{-\gamma t}(y^2_{(n-1)h}+z^2_{(n-1)h})+3(1+\epsilon^{-1})r^2+(1+\epsilon)hr^2\\
&=(1+\epsilon)^2e^{-\gamma t}|HU_{n-1}|^2+3(1+\epsilon^{-1})r^2+(1+\epsilon)hr^2. 
\end{align*}
\end{example}

\section{Geometric ergodicity of the ensemble based Kalman filters}
\label{sec:ergo}
The objective of this section is to verify geometric ergodicity for the signal-ensemble process. In particular, if $P$ denotes the Markov transition kernel for the signal ensemble process, then we will show that there exists a constant $\gamma \in (0,1)$ such that
\begin{equ}
\norm{P^n \mu - P^n \nu}_{TV} \leq C_{\mu,\nu}\gamma^n\;,
\end{equ}
where $\mu,\nu$ are two arbitrary initial probability distributions, $C_{\mu,\nu}$ is a time uniform constant that depends on $\mu,\nu$, and $\norm{\cdot}_{TV}$ denotes the total variation norm. Furthermore the nonlinear filter has a unique invariant attracting state, the analogue of the asymptotic filter for linear Kalman filters \cite{Jaz72, MH12}. Hence, geometric ergodicity implies that discrepancies in the initialization of the ensemble filters, which is usually inevitable in practice, will dissipate exponentially with time.

To prove the ergodicity of the signal-ensemble process, we invoke a standard result of Markov chain theory \cite{MT93}. Here we use a simple adaptation of the form given in \cite[Theorem 2.3]{MS02}.
\begin{thm}
\label{thm:MS02}
Let $X_n$ be a Markov chain in a space $E$ such that
\begin{enumerate}
\item There is a strong Lyapunov function $\mathcal{E}:E\mapsto \mathbb{R}^+$ for the Markov process $X_n$ 
\item For any fixed $M>0$,  the compact set $C=\{x:\mathcal{E}(x)\leq M\}$ satisfies the minorization assumption. That is, there is a probability measure $\nu$ with $\nu(C)=1$, and a $\eta>0$ such that for any given set $A$
\[
\mathbb{P}(X_n\in A|X_{n-1}=x)\geq \eta\nu(A)
\]
for all $x\in C$. 
\end{enumerate}
Then there is a unique invariant measure $\pi$ and constants $r \in (0,1), \kappa>0$ such that
\[
\|\mathbb{P}^\mu(X_n\in \,\cdot\,)-\pi\|_{TV}\leq \kappa r^n \bigg(1+\int \CE(x) \mu(dx)\bigg). 
\]
\end{thm}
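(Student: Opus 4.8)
The plan is to recognize Theorem \ref{thm:MS02} as an instance of the classical Harris ergodic theorem and to obtain it by verifying the two standard ingredients it packages — a geometric drift condition and a one-step small-set minorization — and then invoking the abstract machinery of \cite{MT93, MS02}. The theorem is stated here only to present the known result in the form used in the rest of Section \ref{sec:ergo}, so the task is really to check that hypotheses (i)--(ii) supply exactly the data that \cite[Theorem 2.3]{MS02} requires.

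First I would extract a genuine geometric drift from the strong Lyapunov property: the defining inequality \eqref{eqn:timediscreteLyap} gives $\mathbb{E}(\mathcal{E}(X_n)\,|\,X_{n-1}=x)\leq (1-\beta)\mathcal{E}(x)+K$, and since the sublevel sets of $\mathcal{E}$ are compact, for any threshold $M>K/\beta$ the set $C=\{\mathcal{E}\leq M\}$ is compact and outside $C$ one has $\mathbb{E}(\mathcal{E}(X_n)\,|\,X_{n-1}=x)\leq \gamma_0\,\mathcal{E}(x)$ for some $\gamma_0<1$. In particular the return time to $C$ has geometric tails uniformly in the starting point, with a bound controlled by $\mathcal{E}(x)$. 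The second ingredient is precisely hypothesis (ii): the measure $\nu$ and the constant $\eta$ make $C$ a one-step small set.

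Next, with both ingredients in hand I would run the standard coupling argument. Given two initial laws $\mu,\nu$, construct a Markovian coupling $(X_n,Y_n)$ of two copies of the chain so that whenever both marginals lie in $C$ they coalesce with probability at least $\eta$ (splitting the transition kernel along the minorizing measure), and otherwise evolve through any coupling, with the product quantity $\mathcal{E}(X_n)+\mathcal{E}(Y_n)$ still contracting geometrically up to an additive constant. A Foster--Lyapunov excursion estimate then shows the pair visits $C\times C$ with geometrically controlled inter-visit times, so the coupling time $\tau$ has an exponential moment weighted by $1+\int\mathcal{E}\,d\mu+\int\mathcal{E}\,d\nu$. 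Since $\|\mathbb{P}^\mu(X_n\in\cdot\,)-\mathbb{P}^\nu(X_n\in\cdot\,)\|_{TV}\leq 2\,\mathbb{P}(\tau>n)$, this yields $\|\mathbb{P}^\mu(X_n\in\cdot\,)-\mathbb{P}^\nu(X_n\in\cdot\,)\|_{TV}\leq \kappa r^n\big(1+\int\mathcal{E}\,d\mu+\int\mathcal{E}\,d\nu\big)$ for some $r\in(0,1)$. Existence of an invariant measure $\pi$ follows from Krylov--Bogoliubov applied to the Cesàro averages of $\mathbb{P}^{\delta_{x_0}}(X_n\in\cdot\,)$, which are tight because the drift bound \eqref{eqn:expectation} keeps $\mathbb{E}\mathcal{E}(X_n)$ uniformly bounded and the sublevel sets are compact; uniqueness of $\pi$ and the stated bound then come from specializing the contraction estimate to $\nu=\pi$.

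The routine-but-essential step — and the only place where anything has to be checked rather than quoted — is the excursion-and-coupling-time estimate that turns the drift plus minorization into a quantitative contraction in the $\mathcal{E}$-weighted total variation distance. This is exactly the content of \cite[Theorem 2.3]{MS02} (equivalently the Harris theorem as in Meyn--Tweedie), so in the paper I would simply cite it after noting that (i) furnishes the geometric drift with compact sublevel sets and (ii) furnishes the small set. No genuinely new obstacle arises.
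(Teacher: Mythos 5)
Your proposal is correct and matches the paper's approach exactly: the paper gives no proof of this statement, presenting it as a standard result quoted (in slightly adapted form) from \cite[Theorem 2.3]{MS02} and \cite{MT93}, which is precisely what you do after sketching the classical drift-plus-minorization coupling argument behind it. The only cosmetic remark is that your appeal to Krylov--Bogoliubov for the existence of $\pi$ would require a Feller-type continuity assumption not present in the hypotheses; existence follows more directly from the contraction estimate itself, since together with the uniform bound on $\mathbb{E}\,\mathcal{E}(X_n)$ it makes $P^n\mu$ a Cauchy sequence in total variation whose limit is invariant.
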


In the following we discuss the conditions we require in order to apply Theorem \ref{thm:MS02}. In the previous sections, we established the existence of Lyapunov functions for signal-ensemble processes. In particular, when $H$ is of full rank,  Theorems \ref{thm:EnKFobs}, \ref{thm:EAKF} and \ref{thm:strong} provide simple criteria which guarantee that the filtering process satisfies the following assumption
\begin{aspt} [Existence of a strong Lyapunov function]
\label{aspt:Lyapunov}
 \label{ass:lyap}
There is a function $\mathcal{E}:\reals^d \times \reals^{d\times K} \to  \mathbb{R}^+$ with compact sublevel sets and constants $K_h, \beta_h>0$ such that
\[
\mathbb{E}_{n-1}\mathcal{E}(U_n,V^{(1)}_n,\ldots,V^{(K)}_n)\leq (1-\beta_h)\mathcal{E}(U_{n-1},V^{(1)}_{n-1},\ldots,V^{(K)}_{n-1})+K_h\;.
\]
\end{aspt}
Although Assumption \ref{aspt:Lyapunov} may be difficult to hold in general scenarios, the authors have found an adaptive covariance inflation scheme that always guarantees Assumption \ref{aspt:Lyapunov}, which will be reported in a separate paper \cite{TMK15}. This assumption provides the first hypothesis of Theorem \ref{thm:MS02}.

In order to verify the minorization condition of Theorem \ref{thm:MS02}, we need to assume there is a density for the noise $\zeta_n$ appearing in the time discrete model \eqref{sys:discrete}\;.
\begin{aspt}[Nondegenerate system noise]
\label{aspt:density}
For any $M_1, M_2>0$, there is a constant $\alpha>0$ such that 
\[
\mathbb{P}(\zeta_n\in \cdot \; |U_{n-1}=u)\geq \alpha\lambda_{M_2}(\cdot)
\]
for all $|u|\leq M_1$, where $\lambda_{M_2}(dx)$ is the Lebesgue measure of $\mathbb{R}^d$ restricted to $\{u: |u|\leq M_2\}$.
\end{aspt}
Assumption \ref{aspt:density} holds for many practical examples. When $U_n$ is produced by time discretization of an SDE \eqref{sys:flow}, it suffices to require $\Sigma$ being nonsingular, please see  \ref{sec:discrete} for a detailed discussion. In other situations where $U_n$ is produced genuinely as a random sequence, $\zeta_n$ is usually a sequence of random Gaussian variables, which Assumption \ref{aspt:density} also satisfies.

\subsection{Controllability}
\par
In this subsection, we establish a framework which verifies the minorization condition using Assumption \ref{aspt:density} and controllability of the Kalman update map. The signal-ensemble process $X_n := (U_n , V_n^{(1)},\dots,V_n^{(K)})$ is a Markov chain taking values in $\CX = \reals^{d }\times \reals^{d \times K}$. For all three ensemble filters, the evolution of $X_n$ is described by the composition of two maps. The first is a random map from $\CX$ to a signal-forecast-observation space $\CY$, described by a Markov kernel $\Phi : \CX \times \CB(\CY) \to [0,1]$. The second is a deterministic map $\Gamma : \CY \to \CX$, which combines the forecast with the observed data to produce the updated posterior ensemble. The details of these maps, as well as the definition of the intermediate space $\CY$, differs between EnKF, ETKF and EAKF. 
\par
For EnKF, the intermediate space is $\CY := \reals^d \times \reals^{d\times K } \times \reals^{q \times K}$ and the random mapping is 
\begin{equ}
(U_{n-1} , V_{n-1}^{(1)},\dots,V_{n-1}^{(K)} ) \mapsto Y_n :=  (U_n, \Vhat_n^{(1)},\dots,\Vhat_n^{(K)},  Z_n^{(1)},\dots,Z_n^{(K)} )\;.
\end{equ}
The deterministic map $\Gamma$ is given by 
\begin{equ}
\Gamma (U_n , \Vhat_n^{(1)},\dots,\Vhat_n^{(K)}, Z_n^{(1)},\dots,Z_n^{(K)}) = (U_n , \Gamma^{(1)},\dots,\Gamma^{(K)})
\end{equ}
where 
\begin{equs}\label{e:Gamma_enkf}
\Gamma^{(k)} &= \widehat{V}^{(k)}-\widehat{C}H^T(I+H\widehat{C}H^T)^{-1}(H \widehat{V}^{(k)}-Z^{(k)}) \\
\widehat{C} &= \frac{1}{K-1}\sum_{k= 1}^{K}(\widehat{V}^{(k)}-\overline{\widehat{V}})\otimes (\widehat{V}^{(k)}-\overline{\widehat{V}}) \quad \Vhatbar = \frac{1}{K}\sum_{k=1}^K \Vhat^{(k)}\;.
\end{equs}
The corresponding formulas for ETKF and EAKF will be given in Sections \ref{s:erg_etkf} and \ref{s:erg_eakf} respectively. 
\par
Given this formuation, it suffices to show that the push-forward kernel $\Gamma^* \Phi (x,\cdot)  = \Phi(x,\Gamma^{-1}(\cdot))$ satisfies the minorization condition. It is easy to see that, given the assumptions on the noise, the kernel $\Phi(x,\cdot)$ has a density with respect to Lebesgue measure, so we simply need to show that the pushforward inherits the density property from $\Phi$. To achieve this, we use the following simple fact.  

\begin{lem}
\label{lem:twosteps}
Let $\Phi$ be a Markov transition kernel from $\reals^{n} \to \reals^{n} \times \reals^m$ with a Lebesgue density $p(x,y) = p(x,(y_1,y_2))$ and let $\Gamma : \reals^n \times \reals^m \to \reals^n$. 
%
Given a compact set $C$, suppose that there is a point $y^*=(y_1^*, y_2^*) \in \reals^n \times \reals^m$ and $\beta > 0 $ such that 
\begin{enumerate}
\item For all $x\in C$, the density function $p(x,y)>\beta$ for $y$ around $y^*$\;,
\item $\Gamma$ is $C^1$ in a neighborhood of  $y^*$ and $\det (\CD_{y_1} \Gamma)|_{y^*}>0$\;. 
\end{enumerate}
Then there is a $\delta>0$ and a neighborhood $O_1$ of $\Gamma(y^*)$ such that for all $x\in C$ 
\[
\Gamma^* \Phi(x, \cdot )\geq \delta\lambda_{O_1}(\cdot)
\]
where $\lambda_{O_1}$ is the Lebesgue measure restricted to the set $O_1$. In other words, the minorization condition holds for the transition kernel $\Gamma^*\Phi$. 
\end{lem}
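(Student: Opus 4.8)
The plan is to reduce the push-forward to an honest change of variables by ``completing'' $\Gamma$ with the identity on the $y_2$-coordinate. Concretely, I would introduce the map $G : \reals^n \times \reals^m \to \reals^n \times \reals^m$ defined by $G(y_1, y_2) = (\Gamma(y_1, y_2),\, y_2)$. Near $y^*$ it is $C^1$, and its derivative there is block triangular,
\[
\CD G|_{y^*} = \begin{pmatrix} \CD_{y_1}\Gamma|_{y^*} & \CD_{y_2}\Gamma|_{y^*} \\ 0 & I_m \end{pmatrix},
\qquad \det \CD G|_{y^*} = \det\big(\CD_{y_1}\Gamma\big)\big|_{y^*} > 0 ,
\]
so the inverse function theorem yields an open neighbourhood $U$ of $y^*$ on which $G$ is a $C^1$-diffeomorphism onto an open neighbourhood $W$ of $G(y^*) = (\Gamma(y^*),\, y_2^*)$. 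Shrinking $U$ (equivalently $W$) if necessary, I would arrange simultaneously that $|\det \CD G^{-1}(w)| \ge c > 0$ for $w \in W$ (continuity of the determinant and its positivity at $y^*$) and, using hypothesis (1), that $p(x,y) > \beta$ for every $x \in C$ and every $y \in U$.

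With $G$ in hand, for any Borel set $A \subseteq \reals^n$ I would write
\[
\Gamma^*\Phi(x, A) = \Phi\big(x, \Gamma^{-1}(A)\big) = \int_{\Gamma^{-1}(A)} p(x,y)\, dy \;\ge\; \int_{\Gamma^{-1}(A)\cap U} p(x,y)\, dy ,
\]
simply discarding the mass outside $U$. The point of the completion is that the first component of $G$ is exactly $\Gamma$, so $G(\Gamma^{-1}(A) \cap U) = W \cap (A \times \reals^m)$; changing variables $w = G(y)$ and applying the two lower bounds gives
\[
\Gamma^*\Phi(x, A) \;\ge\; \int_{W \cap (A \times \reals^m)} p\big(x, G^{-1}(w)\big)\, \big|\det \CD G^{-1}(w)\big|\, dw \;\ge\; \beta c\,\mathrm{Leb}\big(W \cap (A \times \reals^m)\big).
\]
Finally I would fix a product neighbourhood $O_1 \times O_2 \subseteq W$ with $O_1$ an open neighbourhood of $\Gamma(y^*)$ in $\reals^n$ and $O_2$ an open neighbourhood of $y_2^*$ in $\reals^m$. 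For $A \subseteq O_1$ one has $A \times O_2 \subseteq W \cap (A\times\reals^m)$, so the right-hand side is at least $\beta c\,\lambda(A)\lambda(O_2)$; setting $\delta := \beta c\,\lambda(O_2) > 0$ and applying the estimate to $A \cap O_1$ for a general Borel set $A$ yields $\Gamma^*\Phi(x, A) \ge \delta\,\lambda_{O_1}(A)$ for all $x \in C$, which is the asserted minorization.

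I do not expect a deep obstacle here; the one thing that genuinely needs care is \emph{uniformity over the compact set $C$}. All of $U$, the constants $\beta$ and $c$, and hence the neighbourhood $O_1$ and the constant $\delta$, must be chosen independently of $x \in C$ --- this is precisely why hypothesis (1) is phrased with a single $\beta$ (which I read as: there is one neighbourhood of $y^*$ on which $p(x,\cdot) > \beta$ for all $x \in C$) and why $C$ is assumed compact. The remaining bookkeeping --- intersecting the various neighbourhoods so that the diffeomorphism property, the Jacobian lower bound, and the density lower bound all hold on the same set, and then fitting a product neighbourhood $O_1 \times O_2$ inside $W$ --- is routine.
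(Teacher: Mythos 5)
Your proposal is correct and follows essentially the same route as the paper: both complete $\Gamma$ to the map $(y_1,y_2)\mapsto(\Gamma(y_1,y_2),y_2)$, invoke the block-triangular Jacobian and the inverse function theorem to change variables on a neighborhood of $y^*$ where the density and the Jacobian determinant are uniformly bounded below over $x\in C$, and then restrict to a product neighborhood $O_1\times O_2$ to read off the minorization with $\delta$ proportional to $\lambda(O_2)$. Your writeup is, if anything, slightly cleaner on the final bookkeeping (the paper's last display has the roles of $O_1$ and $O_2$ transposed in the product $\lambda(O_1)\lambda(O_2\cap A)$, which should read $\lambda(O_1\cap A)\lambda(O_2)$).
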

\begin{proof}
By continuity and compactness, we can find  a small neighborhood $O$ of $y^*$, a neighborhood $O'$ of $(\Gamma(y^*),y_2^*)$, and a small positive number $\epsilon>0$ such that for all $x\in C, (y_1,y_2)\in O$
\[
p(x,(y_1,y_2))>\epsilon, \quad \epsilon^{-1}>|\det (\CD_{y_1} \Gamma(y_1,y_2))|>\epsilon\;.
\]
such that $\Gamma$ is a $C^1$ diffeomorphism from $O$ to $O'$. Let $D_0 = \{ (y_1,y_2) : \Gamma (y_1,y_2 ) \in A , y_2 \in B , (y_1,y_2) \in O \}$ then, using the change of variables $(y_1,y_2) \mapsto (\Gamma(y_1,y_2),y_2)$, we have by the change of variables formula
\begin{equs}
\Phi(x,D_0) &:= \int \unit_{A} (\Gamma(y_1,y_2)) \unit_{B}(y_1,y_2) p(x,y_1,y_2) dy_1 dy_2\\ &= \int\unit_{(A \times B)}(z,y_2) \unit_{O'}(z,y_2) q(x, (z,y_2)) dz dy_2
\end{equs} 
where 
\[
q(x,(z,y_2))=p(x,y_1,y_2) \bigg|\det\begin{bmatrix} \CD_{y_1} z & \CD_{y_2} z\\
\CD_{y_1} y_2 &\CD_{y_2} y_2
\end{bmatrix}\bigg|^{-1}=p(x,y_1,y_2) |\det (\CD_{y_1} \Gamma(y_1,y_2))|^{-1}.
\]
with $z = \Gamma (y_1,y_2)$. 
%
By construction, $q$ is strictly above $\epsilon^2$ for $x\in C$ and $(z,y_2)\in O'$. Pick neighborhood $O_1$ of $z^*=\Gamma(y^*)$ and $O_2$ of $y_2$ such that $O_1\times O_2\subset O'$. Then, since the set $\Gamma^{-1}(A) \cap O$ is of the form $D_0$ (with $B = \reals^m$) we can apply the above change of variables formula to obtain
\begin{equs}
\Phi(x,\Gamma^{-1} (A)) \geq \Phi (x,\Gamma^{-1}(A)\cap O) &= \int \unit_{A}(z) \unit_{O'}(z,y_2) q(x,z,y_2) dz dy_2 \\
& \geq \int_{O_1 \times O_2} \unit_{A}(z)  q(x,z,y_2) dz dy_2 \\
& \geq \epsilon^2 \lambda(O_1) \lambda(O_2 \cap A)\;.
\end{equs}
%
Then taking $\delta=\epsilon^2 \lambda(O_2)$ satisfies our requirement. 
\end{proof}

\subsection{Ergodicity for the EnKF}
In the application of Lemma \ref{lem:twosteps} to the signal-ensemble process, we will use the variables
\begin{equs}
x &= (U_{n-1} , V_{n-1}^{(1)},\dots,V_{n-1}^{(K)} )\\  y_1 &= (U_n, \Vhat_n^{(1)},\dots,\Vhat_n^{(K)}) \\ y_2 &= (Z_n, Z_n^{(1)},\dots,Z_n^{(K)} )
\end{equs}
%
The choice of the intermediate point $(y_1^*,y_2^*)$ can be quite delicate. Although the non degeneracy of the Jacobian should in principal be verifiable for general $\Gamma$, a well chosen intermediate point can simplify the computation significantly.

With Theorem \ref{thm:MS02} and Lemma \ref{lem:twosteps}, the verification of EnKF becomes rather straight forward.
\begin{thm}
\label{thm:EnKFergo} If the unfiltered signal process $U_n$ has an kinetic energy principle with nondegenerate system noise, and the EnKF signal-ensemble process has a strong Lyapunov function, in other words Assumptions \ref{aspt:kinetic}, \ref{aspt:density} and \ref{aspt:Lyapunov} hold,  then the EnKF signal-ensemble process is geometrically ergodic in total variation distance.
\end{thm}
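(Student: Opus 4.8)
The plan is to invoke Theorem \ref{thm:MS02}, so it suffices to supply its two hypotheses: the strong Lyapunov function and the minorization condition on sublevel sets. The first is granted directly by Assumption \ref{aspt:Lyapunov}, so the entire content of the proof is the verification of the minorization condition, and the strategy is to reduce it to Lemma \ref{lem:twosteps}. To do this, I would cast one step of the EnKF signal-ensemble dynamics in the form $\Gamma \circ \Phi$ as set up in Section \ref{sec:ergo}: the random kernel $\Phi$ maps $x = (U_{n-1},V_{n-1}^{(1)},\dots,V_{n-1}^{(K)})$ to $(y_1,y_2) = \big((U_n,\Vhat_n^{(1)},\dots,\Vhat_n^{(K)}),(Z_n,Z_n^{(1)},\dots,Z_n^{(K)})\big)$, and $\Gamma$ is the deterministic Kalman analysis map \eqref{e:Gamma_enkf}. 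The first hypothesis of Lemma \ref{lem:twosteps} — that $\Phi(x,\cdot)$ has a density bounded below near a chosen point $y^*$, uniformly over $x$ in a fixed compact set $C$ — follows from Assumption \ref{aspt:density}: the $\Vhat_n^{(k)} = \Psi_h(V_{n-1}^{(k)}) + \zeta_n^{(k)}$ and $U_n = \Psi_h(U_{n-1}) + \zeta_n$ components inherit a uniform lower bound on a ball from the nondegeneracy of the system noise, and the $Z_n,Z_n^{(k)}$ components are nondegenerate Gaussians (or at least have densities bounded below locally) by the standing assumption that $\xi_n$ has identity covariance. One must only check that $\Psi_h$ maps the compact set $C$ into a bounded region, which is immediate from continuity of $\Psi_h$ (or can be taken as part of the model regularity).

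The real work is the second hypothesis: choosing a point $y^*=(y_1^*,y_2^*)$ at which $\Gamma$ is $C^1$ and the partial Jacobian $\det(\CD_{y_1}\Gamma)|_{y^*}$ is nonzero. Here $y_1 = (U_n,\Vhat_n^{(1)},\dots,\Vhat_n^{(K)})$ and $\Gamma$ outputs $(U_n,\Gamma^{(1)},\dots,\Gamma^{(K)})$ with $\Gamma^{(k)} = \Vhat^{(k)} - \widehat{C}H^T(I+H\widehat{C}H^T)^{-1}(H\Vhat^{(k)}-Z^{(k)})$ and $\widehat{C}$ the sample covariance of the $\Vhat^{(k)}$. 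The $U_n$-block is the identity, so the determinant factors and we need only the Jacobian of $(\Vhat^{(1)},\dots,\Vhat^{(K)})\mapsto(\Gamma^{(1)},\dots,\Gamma^{(K)})$. The map is smooth in the $\Vhat^{(k)}$ wherever $I+H\widehat{C}H^T$ is invertible, which is always, since $\widehat{C}\succeq 0$; hence $\Gamma$ is globally $C^1$ in $y_1$ and the only issue is nonvanishing of the determinant. The clean trick is to pick an intermediate point at which the ensemble spread degenerates: choose $\Vhat_n^{(1)}=\dots=\Vhat_n^{(K)} = v^*$ for a single vector $v^*$ (chosen in the support region guaranteed by the density bound), so that $\widehat{C} = 0$ there. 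Then $\Gamma^{(k)} = \Vhat^{(k)}$ to zeroth order, and differentiating, the correction term $\widehat{C}H^T(I+H\widehat{C}H^T)^{-1}(H\Vhat^{(k)}-Z^{(k)})$ is quadratic in the fluctuations $\Vhat^{(j)}-\Vhatbar$ (since $\widehat C$ is), so its derivative at the collapsed configuration vanishes. Therefore $\CD_{y_1}\Gamma|_{y^*} = I$, whose determinant is $1>0$. One should double-check that this $y^*$ is compatible with hypothesis (1) — it is, since requiring all $\Vhat^{(k)}$ near a common $v^*$ is an open condition of positive Lebesgue measure hit with positive density.

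With both hypotheses of Lemma \ref{lem:twosteps} verified, the lemma yields a $\delta>0$ and a neighborhood $O_1$ of $\Gamma(y^*)$ with $\Gamma^*\Phi(x,\cdot)\geq \delta\lambda_{O_1}(\cdot)$ for all $x\in C$; since $\Gamma^*\Phi$ is exactly the one-step transition kernel of the signal-ensemble chain, this is the minorization condition required by Theorem \ref{thm:MS02} for the compact sublevel set $C = \{\CE\leq M\}$ (noting $\CE$ has compact sublevel sets by Assumption \ref{aspt:Lyapunov}). Theorem \ref{thm:MS02} then delivers the unique invariant measure and the geometric convergence $\|\mathbb{P}^\mu(X_n\in\cdot)-\pi\|_{TV}\leq \kappa r^n(1+\int\CE\,d\mu)$, which is the asserted geometric ergodicity in total variation. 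The main obstacle, and the only genuinely delicate point, is the Jacobian computation; the collapsed-ensemble choice of $y^*$ is what makes it trivial, and I would flag that the same device will be reused — with more care, because of the eigenstructure of the square-root transforms — in the ETKF and EAKF cases in Sections \ref{s:erg_etkf} and \ref{s:erg_eakf}.
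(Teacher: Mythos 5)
Your proposal is correct and follows essentially the same route as the paper: reduce to Theorem \ref{thm:MS02}, verify minorization via Lemma \ref{lem:twosteps}, and exploit the fact that at a collapsed-ensemble intermediate point both $\Chat$ and $\CD_{y_1}\Chat$ vanish (the former being quadratic in the fluctuations), so that $\CD_{y_1}\Gamma|_{y^*}=I$. The only cosmetic difference is that the paper takes $y^*$ to be the origin rather than an arbitrary collapsed configuration $v^*$; your observation that any such point works is valid but changes nothing essential.
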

\begin{proof}
Fix any $M_1>0$, we apply Lemma \ref{lem:twosteps} to the compact set
\[
C=\bigg\{(u,v^{(1)},\ldots, v^{(K)}): |u|^2+\sum_{k=1}^K|v^{(k)}|^2\leq M_1\bigg\}.
\]
Pick the intermediate point $y^*$ with all its components at the origin. It is easy to see that the first condition of Lemma \ref{lem:twosteps} holds. Indeed the random variable $(\zeta_n, \zeta_n^{(k)},\xi_n,\xi_n^{(k)})$ satisfies the density condition by assumption and $(y_1,y_2)$ is obtained from this random variable via an onto linear transformation, hence $(y_1,y_2)$ inherits the density condition.

It is also elementary to verify the differentiability and nondegeneracy of $\Gamma$ at $y^*$, where $\Gamma$ is defined by \eqref{e:Gamma_enkf}. Indeed, using the formula for gradients of inverse matrices $\CD L^{-1} = -L^{-1}\CD L L^{-1}$, it is clear that $\Gamma$ is a polynomial combination of several continuously differentiable functions and in particular must be $C^1$ near $y^*$. To prove non-degeneracy, notice that both $\Chat$ and $\CD_{y_1} \Chat$ vanish at $y^*$. Using this fact, a simple calculation yields 
\begin{equ}
\CD_{y_1}\Gamma |_{y^*} = I\;,
\end{equ} 
which proves non-degeneracy and hence the Theorem follows from Lemma \ref{lem:twosteps} and Theorem \ref{thm:MS02}.  
%
%

 \end{proof}
\subsection{Ergodicity of ETKF}\label{s:erg_etkf}

For both ETKF and EAKF, the intermediate space is slightly different since there are no longer perturbed observations. In particular we have $\CY : = \reals^d \times \reals^{d \times K} \times \reals^q$ and the Markov kernel $\Phi : \CX \times \CB(\CY) \to [0,1]$ is described by 
\begin{equ}
(U_{n-1} , V_{n-1}^{(1)},\dots,V_{n-1}^{(K)}) \mapsto (U_n, \Vhat_{n}^{(1)},\dots,\Vhat_{n}^{(K)},Z_n)\;.
\end{equ}
The deterministic step is given by the map  $\Gamma(U , V , Z ) = (U,\Gamma^{(1)},\dots,\Gamma^{(K)})$ where
\begin{equ}\label{e:Gamma_etkf}
\Gamma^{(k)} = \Vhatbar - \Chat H^T (I + H \Chat H^T)^{-1} (H \Vhatbar - Z)  + S^{(k)}
\end{equ}
where $\Vbar = \frac{1}{K}\sum_{k=1}^K \Vhat^{(K)}$ and $\Chat = \frac{1}{K-1}\sum_{k=1}^K (\Vhat^{(K)} - \Vbar) \otimes (\Vhat^{(K)} - \Vbar)  $ and $S^{(k)}$ is the $k$-th column of the updated spread matrix $S = \Shat T(\Shat)$ where $\Shat$ is the forecast spread matrix $\Shat = (\Vhat^{(1)} - \Vhatbar , \dots ,\Vhat^{(K)} - \Vhatbar)$ and $T(\Shat)$ is the transform matrix. We have not yet defined the transform matrix $T(\Shat)$, other than to require that it yields the covariance condition \eqref{sys:ESRFspread}. One reasonable choice for the transform matrix, which we will adopt, is the matrix square root
\begin{equ}
\label{sys:ETKFspread}
T(\Shat)=\big(I_K+(K-1)^{-1}\widehat{S}^TH^TH\widehat{S}\big)^{-1/2}=\big(I_K-(K-1)^{-1}\widehat{S}^TH^T(I+H\widehat{C}H^T)^{-1}H\widehat{S}^T\big)^{1/2}\;.
\end{equ} 
Note that the square root is well defined and unique since the argument is symmetric and positive semi-definite. We will now apply Lemma \ref{lem:twosteps} with 
\begin{equs}
x = (U_{n-1}, V_{n-1}^{(1)},\dots,V_{n-1}^{(K)})\quad
y_1  = (U_n, \Vhat_n^{(1)},\dots,\Vhat_n^{(K)})\quad
y_2  = Z_n\;.
\end{equs}
and the intermediate point $y^* = (y_1^*,y_2^*) = (0,0)$.

\begin{thm}
\label{thm:ETKFergo}
If the unfiltered signal process $U_n$ has an kinetic energy principle with nondegenerate system noise, and the ETKF signal-ensemble process has a strong Lyapunov function, in other words Assumptions \ref{aspt:kinetic}, \ref{aspt:density} and \ref{aspt:Lyapunov} hold,  then the ETKF signal-ensemble process is geometrically ergodic.
\end{thm}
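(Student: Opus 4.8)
The plan is to reuse, almost verbatim, the argument for the EnKF in Theorem \ref{thm:EnKFergo}: invoke the abstract criterion of Theorem \ref{thm:MS02}, whose first hypothesis (a strong Lyapunov function) is exactly Assumption \ref{aspt:Lyapunov}, and reduce everything to the minorization condition, which we verify with Lemma \ref{lem:twosteps}. Fix any $M_1>0$ and let $C=\{(u,v^{(1)},\dots,v^{(K)}):|u|^2+\sum_{k=1}^K|v^{(k)}|^2\le M_1\}$; it suffices to check the two hypotheses of Lemma \ref{lem:twosteps} on $C$ with the variables $x=(U_{n-1},V_{n-1}^{(1)},\dots,V_{n-1}^{(K)})$, $y_1=(U_n,\widehat V_n^{(1)},\dots,\widehat V_n^{(K)})$, $y_2=Z_n$, the deterministic map $\Gamma$ of \eqref{e:Gamma_etkf}--\eqref{sys:ETKFspread}, and the intermediate point $y^*=(y_1^*,y_2^*)=(0,0)$.

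For the density hypothesis one argues exactly as in Theorem \ref{thm:EnKFergo}: $y_1$ is an affine bijection of the noise vector $(\zeta_n,\zeta_n^{(1)},\dots,\zeta_n^{(K)})$ with $x$-dependent shifts, and $y_2=HU_n+\xi_n$, so Assumptions \ref{aspt:kinetic} and \ref{aspt:density} (nondegeneracy of $\zeta_n$) together with the nondegeneracy of $\xi_n$ produce a Lebesgue density for $(y_1,y_2)$ that is bounded below on a neighborhood of the origin, uniformly for $x\in C$ (choosing the radius $M_2$ in Assumption \ref{aspt:density} large enough to cover the compact range of $-\Psi_h$ on $C$). The only difference from the EnKF case is cosmetic: there are no perturbed observations $Z_n^{(k)}$, so one uses the density of $\xi_n$ directly.

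The regularity and nondegeneracy of $\Gamma$ at $y^*$ is the step where the square-root transform \eqref{sys:ETKFspread} enters. At $y^*=0$ all forecast members vanish, hence $\overline{\widehat V}=0$, the forecast spread $\widehat S$ is the zero matrix, and $\widehat C=0$; in particular the argument $I_K+(K-1)^{-1}\widehat S^TH^TH\widehat S$ of the square root equals $I_K$ there. Since $A\mapsto A^{-1/2}$ is real-analytic on positive-definite matrices and the argument stays a perturbation of $I_K$ near $\widehat S=0$, the map $T(\widehat S)$, and hence $\Gamma$ (otherwise a rational expression whose matrix inverse is nonsingular near $y^*$), is $C^1$ in a neighborhood of $y^*$. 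For the Jacobian, write $S=\widehat S\,T(\widehat S)$; the product rule gives $\CD_{y_1}S|_{y^*}=(\CD_{y_1}\widehat S)|_{y^*}\,T(0)+\widehat S|_{y^*}\,\CD_{y_1}T(\widehat S)|_{y^*}=(\CD_{y_1}\widehat S)|_{y^*}$, because $\widehat S|_{y^*}=0$ annihilates the second term and $T(0)=I_K$. Likewise $\widehat C$ and $\CD_{y_1}\widehat C$ both vanish at $y^*$, so the Kalman-gain term contributes nothing to $\CD_{y_1}\Gamma^{(k)}$. Combining $\CD_{\widehat V^{(j)}}\overline{\widehat V}=\tfrac1K I_d$ with $\CD_{\widehat V^{(j)}}(\widehat V^{(k)}-\overline{\widehat V})=(\delta_{jk}-\tfrac1K)I_d$ gives $\CD_{\widehat V^{(j)}}\Gamma^{(k)}|_{y^*}=\delta_{jk}I_d$; since $\Gamma^{(k)}$ does not depend on the $U$-slot of $y_1$ and the first output component of $\Gamma$ is $U_n$ itself, this yields $\CD_{y_1}\Gamma|_{y^*}=I_{d(K+1)}$, whose determinant is positive. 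Lemma \ref{lem:twosteps} then delivers the minorization condition, and Theorem \ref{thm:MS02} gives geometric ergodicity.

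I expect the only genuine obstacle to be confirming that the matrix-square-root transform does not spoil $C^1$-regularity and that its differential drops out; with the intermediate point placed at the origin this collapses to the short computation above, but a less careful choice of $y^*$ (e.g.\ one with nonzero forecast spread) would force a full multivariate perturbation analysis of $\CD T(\widehat S)$ — precisely the delicate computation that is unavoidable in the EAKF treatment of Section \ref{s:erg_eakf}.
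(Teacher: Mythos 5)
Your proof is correct and follows essentially the same route as the paper: same intermediate point $y^*=0$, same reduction of the density condition to the EnKF case, and the same product-rule computation showing $\CD_{y_1}S|_{y^*}=\CD_{y_1}\widehat S|_{y^*}$ so that $\CD_{y_1}\Gamma|_{y^*}=I$. The only cosmetic difference is that you justify the $C^1$ regularity of $T(\widehat S)$ via real-analyticity of the matrix square root on positive-definite matrices, where the paper cites its Dunford--Taylor integral lemma (Lemma \ref{lem:matrixhalf}); both suffice.
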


\begin{proof}
Fix any number $M_1>0$ and define the compact set
\[
C=\bigg\{(u,v^{(1)},\ldots, v^{(k)}): |u|^2+\sum_{k=1}^K|v^{(k)}|^2\leq M_1\bigg\}.
\]
As with Theorem \ref{thm:EnKFergo}, we pick the intermediate point $y^*$ to be the origin. Showing that $\Phi$ satisfies the first condition of Lemma \ref{lem:twosteps} is identical to Theorem \ref{thm:EnKFergo}, hence it suffices to show differentiability and non-degeneracy.  
\par
By Lemma \ref{lem:matrixhalf}, the transform matrix $T(\Shat)$ is continuously differentiable and hence it follows trivially that $\Gamma$ is $C^1$ near $y^*$. For the non-degeneracy condition, we have that
\begin{equ}\label{e:Dy1_eakf}
\CD_{y_1} \Gamma^{(k)} = \CD_{y_1} \Vhatbar - \CD_{y_1} \big(\Chat H^T (I + H\Chat H^T)^{-1} (H\Vhatbar - Z) \big) + \CD_{y_1} S^{(k)}\;.
\end{equ}
Precisely as in Theorem \ref{thm:EnKFergo}, the second term on the right hand side vanishes at $y^*$. For the third term, we have by Leibniz rule
\begin{equ}
\CD_{y_1} S = \CD_{y_1} \Shat T(\Shat) + \Shat \CD_{y_1} T(\Shat)\;.  
\end{equ}
But by the definition of $T(\Shat)$, it is clear that $\CD_{y_1} (T(\Shat) T(\Shat))$ vanishes as $y^*$ and that $T(\Shat) = I$ at $y^*$. Hence we have
\begin{equ}
0 = \CD_{y_1} (T(\Shat) T(\Shat))|_{y^*} = 2 \CD_{y_1}T(\Shat)|_{y^*}\;,
\end{equ}
thus $\CD_{y_1} S|_{y^*} = \CD_{y_1} \Shat |_{y^*} $. Returning to \eqref{e:Dy1_eakf}, we see that
\begin{equ}
\CD_{y_1} \Gamma^{(k)}|_{y^*} = \CD_{y_1}\Vhatbar |_{y^*} + \CD_{y_1} \Shat^{(k)} |_{y^*} =   \CD_{y_1} \Vhat^{(k)} |_{y^*}\;.
\end{equ}
It follows that $\CD_{y_1}\Gamma |_{y^*} = I$ and as in Theorem \ref{thm:EnKFergo}, this completes the proof. 

\end{proof}

\begin{rem}
ETKF has different formulations to \eqref{sys:ETKFspread}, where $T(\widehat{S})$ is obtained by multiplying the original formula by a rotation matrix on the right. The same principles apply to these ETKF, as long as the rotation matrix is differentiable around the intermediate point. However, the origin may not be good choice of intermediate point, since the rotation map can be singular to perturbations around the origin. In this case, one must use the methods in the following section for EAKF, where we deal with the same issue. 
\end{rem}

%

\subsection{Ergodicity of EAKF}\label{s:erg_eakf} 

%

In this section, we apply the same strategy as EnKF and ETKF to obtain geometric ergodicity for EAKF. For EAKF, the intermediate space $\CY$ and the Markov kernel $\Phi$ are identical to those of ETKF. The deterministic map $\Gamma$ is still defined by \eqref{e:Gamma_etkf}, but now the spread matrix $S$ is defined by $S = A(\Shat)\Shat$ where $A(\Shat)$ is an adjustment matrix. As with ETKF, the adjustment matrix can be any matrix that ensures the covariance condition \eqref{sys:ESRFspread}. In the next section, we will discuss how to construct such an adjustment matrix.

\subsubsection{Detailed formulation of EAKF}
\label{sec:formulationEAKF}

We adopt the formulation of EAKF from \cite{And01, MH12}, described by the following steps. 
\begin{enumerate}
\item Compute the SVD decomposition of $\widehat{S}$, denoted by $\widehat{S}=Q \Lambda R$. When there is rank deficiency, i.e. $\Lambda$ is not square or not invertible, we can further decompose $Q$ and $R$ into parts corresponding to null and complementary subspaces: 
\begin{equation}
\label{eqn:SVD}
\widehat{S}=\begin{bmatrix}Q_1 &Q_2\end{bmatrix} \begin{bmatrix}\Lambda_1 &0\\
0 &0\end{bmatrix} \begin{bmatrix} R_1\\ R_2 \end{bmatrix}=Q_1 \Lambda_1 R_1
\end{equation}
where $\Lambda_1$ is a square diagonal invertible $k \times k$ matrix with $k\leq \min(d,K)$. Without loss of generality, we assume $\Lambda_1$ has its diagonal entries descending.
\item Let $G^TDG$ be the eigenvalue decomposition of $(K-1)^{-1}\Lambda^T Q^TH^THQ\Lambda$ where $D$ is positive semi-definite with decreasing diagonal entries.  As in the first step, in the rank deficient case we can write
\[
G_1^TD_1G_1=(K-1)^{-1}\Lambda_1 Q^T_1H^THQ_1\Lambda_1
\]
where $G_1, D_1$ are $k\times k$ matrices. 
\item In the general (rank deficient) case, the assimilation matrix is given by
\begin{equation}
\label{sys:EAKFspreaddeficient}
A(\Shat)=Q_1 \Lambda_1 G_1^T(I+D_1)^{-1/2}\Lambda^{-1}_1Q_1^T\;
\end{equation}
or equivalently updating the spread matrix to be
\[
S=Q_1\Lambda_1 G_1^T(I+D_1)^{-1/2}R_1. 
\]
In the full rank case this reduces to the more well known formulation
\begin{equation}
\label{sys:EAKFspread}
A(\Shat)=Q\Lambda G^T(I+D)^{-1/2} \Lambda^{\dagger}Q^T\;.
\end{equation}

\end{enumerate}

In existing EAKF literature, the rank deficient case is rarely examined except as a footnote in \cite{TAB03}. However, rank deficiency is  unavoidable when the ensemble size $K$ is less than the model dimension $d$. As a matter of fact, in these degenerate scenarios, the choice of the eigen-basis is very subtle, and simply following the classical formulation \eqref{sys:EAKFspread} with different choice of $G$ could end up with inaccurate posterior covariance.  The assimilation matrix formulation \eqref{sys:EAKFspreaddeficient} we used here guarantees that the posterior covariance follows the Kalman update relation \eqref{eqn:postprior}. In practice, direct application of \eqref{sys:EAKFspread} in most mathematical programs with default settings will generate the same update rule.  A more detailed discussion of these issues are presented in \ref{sec:EAKFpatho}.



Since the above EAKF formulation relies on a singular value decomposition, the choice of singular vectors and hence the map $\Gamma$ has a non-unique definition. To show geometric ergodicity, we rely on differentiability properties of this map $\Gamma$. Hence we must rely on a rigid definition of the map $\Gamma$ involving a fixed choice of singular value decomposition. From here on, any function $\Gamma$ that fits into the above formulation will be called an \emph{EAKF update map}. The following theorem, which is the main result of the section states that there exists a choice of EAKF update map that renders the EAKF process geometrically ergodic. Due to the complexity in defining $\Gamma$,  statements concerning the ergodicity of an arbitrary EAKF algorithm seem out of reach.


\begin{thm}
\label{thm:EAKFergo}
There exists an EAKF update mapping $\Gamma$ such that if the signal-ensemble process generated by it has a strong Lyapunov function, and the unfiltered signal process $U_n$ has an kinetic energy principle with nondegenerate system noise, in other words Assumptions \ref{aspt:kinetic}, \ref{aspt:Lyapunov} and \ref{aspt:density} hold,  then the EAKF signal-ensemble process is geometrically ergodic in total variation distance.
\end{thm}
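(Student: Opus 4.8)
The plan is to deduce the statement from the abstract ergodicity criterion, Theorem~\ref{thm:MS02}. Since, by hypothesis (Assumption~\ref{aspt:Lyapunov}), the EAKF signal--ensemble process $X_n=(U_n,V_n^{(1)},\dots,V_n^{(K)})$ carries a strong Lyapunov function, the first condition of Theorem~\ref{thm:MS02} is automatic, so the whole task is to verify the minorization condition on an arbitrary ball $C=\{(u,v^{(1)},\dots,v^{(K)}):|u|^2+\sum_k|v^{(k)}|^2\le M_1\}$, which --- because $\CE$ has compact sublevel sets --- covers the sublevel sets that Theorem~\ref{thm:MS02} actually asks about. As in Theorems~\ref{thm:EnKFergo} and~\ref{thm:ETKFergo}, I would do this via Lemma~\ref{lem:twosteps}, writing the transition of $X_n$ as $\Gamma^*\Phi$, with $\Phi$ the random forecast-and-observation kernel, $\Gamma$ the deterministic EAKF update map of \eqref{e:Gamma_etkf} with $S=A(\Shat)\Shat$, and variables $x$, $y_1=(U_n,\Vhat_n^{(1)},\dots,\Vhat_n^{(K)})$, $y_2=Z_n$ exactly as in Section~\ref{s:erg_etkf}. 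The density hypothesis on $\Phi$ follows verbatim from Assumption~\ref{aspt:density}, the kinetic energy principle (Assumption~\ref{aspt:kinetic}) and the continuity of $\Psi_h$ on $C$, precisely as in Theorems~\ref{thm:EnKFergo} and~\ref{thm:ETKFergo}; importantly, that argument is insensitive to the choice of intermediate point $y^*$, since realizing a prescribed $(U_n,\Vhat_n^{(k)},Z_n)$ only requires the driving noises to take values in a bounded set whose size is uniform over $x\in C$.

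The genuinely new issue --- anticipated by the remark following Theorem~\ref{thm:ETKFergo} --- is that, unlike for EnKF and ETKF, the origin is a \emph{bad} intermediate point for EAKF: at $\Shat=0$ all singular values of the forecast spread collapse and $\Gamma$ has no smooth local branch there. The remedy is to pick $y^*$ in general position: choose the forecast members $\Vhat_n^{*(k)}$ so that the forecast spread $\Shat^*$ has maximal rank $k=\min(d,K-1)$ with simple (hence locally $C^1$) singular values, and so that the symmetric matrix $(K-1)^{-1}\Lambda_1 Q_1^T H^T H Q_1\Lambda_1$ from the EAKF formulation \eqref{eqn:SVD}--\eqref{sys:EAKFspreaddeficient} has simple nonzero eigenvalues well separated from its (possibly repeated) zero eigenvalue; such $y^*$ exist since these are open dense conditions. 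Near such a $y^*$ one then \emph{fixes} a $C^1$ branch of the reduced SVD $\Shat=Q_1\Lambda_1 R_1$ and of the spectral data $(D_1,G_1)$ --- for the repeated zero eigenvalue this is exactly where the precise, covariance-consistent eigenbasis convention of Section~\ref{sec:formulationEAKF} (and \ref{sec:EAKFpatho}) must be used to obtain a smooth choice --- and declares $\Gamma$ to be the EAKF update map built from these branches (extended arbitrarily to a globally valid update elsewhere), which is all the theorem claims. Granting the branches, $\Gamma$ is a composition of smooth matrix operations already handled for EnKF/ETKF together with the matrix square root of Lemma~\ref{lem:matrixhalf}, hence $C^1$ near $y^*$. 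I would record the required SVD and symmetric-eigendecomposition perturbation statements as separate lemmas; establishing them is the main technical obstacle --- this is the ``careful multivariate perturbation analysis of the covariance eigen-structure'' referred to in the abstract.

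It remains to check $\det(\CD_{y_1}\Gamma)|_{y^*}\ne 0$, and here the block structure from the ETKF proof survives. The map $\Gamma$ is the identity in the $U$--component, and after the linear, invertible change of coordinates from $(\Vhat^{(1)},\dots,\Vhat^{(K)})$ to (forecast mean $\Vhatbar$, forecast spread $\Shat$) its Jacobian is block triangular, with mean block $(I+\Chat^* H^T H)^{-1}$ --- invertible, mirroring Theorem~\ref{thm:ETKFergo}, where at the origin this block was simply the identity --- and spread block equal to the differential of $\Shat\mapsto S=A(\Shat)\Shat$ at $\Shat^*$, restricted to the $d(K-1)$--dimensional space of zero--column--sum matrices. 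So everything reduces to showing this spread differential is invertible.

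For the spread differential I would argue structurally rather than by brute force. Writing $S=Q_1\Lambda_1 G_1^T(I+D_1)^{-1/2}R_1$, the key observation is that $\Chat=(K-1)^{-1}Q_1\Lambda_1^2 Q_1^T$ --- hence also $D_1$ and $G_1$ --- depends on $\Shat$ only through $Q_1$ and $\Lambda_1$, never through $R_1$; consequently $SS^T=Q_1\Lambda_1 G_1^T(I+D_1)^{-1}G_1\Lambda_1 Q_1^T$ equals $(K-1)$ times the Kalman--updated covariance $\Chat-\Chat H^T(H^T\Chat H+I)^{-1}H\Chat$ of \eqref{sys:ESRFspread}. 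Now if $\delta S=0$ then $\delta(SS^T)=0$; since the Kalman covariance map is a local diffeomorphism on the manifold of rank--$k$ positive semidefinite matrices (it preserves range and is invertible in information form), this forces $\delta\Chat=0$, and a short computation using the simplicity of the singular values of $\Shat^*$ then gives $\delta Q_1=0$ and $\delta\Lambda_1=0$, hence $\delta D_1=\delta G_1=0$ along the fixed branch. Therefore $\delta S=Q_1^*\Lambda_1^*(G_1^*)^T(I+D_1^*)^{-1/2}\,\delta R_1$, and the prefactor $Q_1^*\Lambda_1^*(G_1^*)^T(I+D_1^*)^{-1/2}$ is injective (an isometric embedding composed with invertible maps), so $\delta R_1=0$ and $\delta\Shat=(\delta Q_1)\Lambda_1^*R_1^*+Q_1^*(\delta\Lambda_1)R_1^*+Q_1^*\Lambda_1^*(\delta R_1)=0$. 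Thus the spread differential has trivial kernel, hence is invertible, $\CD_{y_1}\Gamma|_{y^*}$ is nonsingular, and the theorem follows from Lemma~\ref{lem:twosteps} and Theorem~\ref{thm:MS02}.
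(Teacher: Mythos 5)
Your proposal is correct and follows the same overall architecture as the paper — Theorem \ref{thm:MS02} plus Lemma \ref{lem:twosteps}, a non-origin intermediate point of maximal rank with simple singular values, $C^1$ branches of the SVD and eigendecompositions via matrix perturbation theory, and a check that $\CD_{y_1}\Gamma|_{y^*}$ is nonsingular — but it diverges from the paper in two substantive ways. First, the paper does not appeal to genericity: it constructs an explicit intermediate point $\Mmax$ (Lemma \ref{lem:onepoint}) whose SVD has $Q_0=I_d$ and whose observed Gram matrix is already diagonal with $G_0=I_r$, after first reducing $H$ to diagonal form via Remark \ref{rmk:svd}; this explicitness is what makes the deferred perturbation lemmas (the paper's Lemma \ref{lem:eigentransform} and formula \eqref{eqn:perturbeigen}) directly usable. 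Second, and more interestingly, your non-degeneracy argument is structural where the paper's (Lemma \ref{lem:controlEAKF}) is an entry-wise nine-step computation: the paper parametrizes the spread perturbation as $BR_S$ and derives two relations $\lambda_jB_{ij}+\lambda_iB_{ji}=0$ (from $\mathcal{D}_M\Chat=0$) and $\lambda_iB_{ij}+\lambda_jB_{ji}=0$ (from the explicit formula for $(\mathcal{D}_MR_1)R_S^T$), which together force $B=0$; you instead deduce $\delta R_1=0$ from injectivity of the prefactor $Q_1\Lambda_1G_1^T(I+D_1)^{-1/2}$ and then recover $\delta\Shat=0$ by the product rule. The two chains are logically equivalent, but yours works at any generic intermediate point and avoids the diagonal coordinates, at the cost of needing the injectivity of the differential of the Kalman covariance map $F(\Chat)=\Chat-\Chat H^T(H\Chat H^T+I)^{-1}H\Chat$; your stated justification (``invertible in information form'') does not literally apply when $\Chat$ is rank deficient, and should be replaced by the identity $\CD F[\delta]=A\,\delta\,A^T$ with $A=(I+\Chat H^TH)^{-1}$ invertible — which is exactly what the paper's Step 2 verifies in diagonal coordinates. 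With that repair, and with the two perturbation lemmas you flag actually carried out (they are the content of \ref{sec:perturb}), your route is a clean and slightly more conceptual alternative to the paper's proof.
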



\subsubsection{The choice of intermediate point}
Unlike the ETKF case, we cannot pick the intermediate point to be the origin. This is because at the origin, the spectrum of $\Shat^T H^TH\Shat$ clusters at $0$ and a perturbation of $\Shat$ may split the eigenvalues into branches while leaving the eigenvector basis matrices $G$ and $R$ to be singular \cite{Kat82}. We will instead choose an intermediate point that has simple nonzero spectrum.

The intermediate point $y^* = (y_1^*,y_2^*)$ will be defined using a matrix $\Mmax$ constructed below. Precisely, we choose  
\begin{equ}\label{e:EAKF_int}
y_1^* = (\Ustar,\Vhat^{*(1)},\dots,\Vhat^{*(K)}) =  (0,\Mmax^{(1)},\dots,\Mmax^{(K)}) \quad y_2^* = \Zstar = 0 \;,  
\end{equ}
where $\Mmax^{(j)}$ denotes the $j$-th column of $\Mmax$. By construction of $\Mmax$, this choice will satisfy $\overline{\widehat{V}^*} = 0$ and hence $\Shat^* = \Mmax$ and moreover will attain the highest possible rank for such a matrix. As we shall see, these properties ensure that the EAKF map is (locally) well behaved under perturbations. 
\par
In the sequel, we assume that the observation matrix $H$ is diagonal with rank $q \leq d$. Using the change of coordinates described in Remark \ref{rmk:svd} as well as the fact that Assumptions \ref{aspt:Lyapunov}, \ref{aspt:density} and the statement of geometric ergodicity hold equivalently in both coordinate systems, this can be achieved without loss of generality.



\begin{lem}
\label{lem:onepoint}
If $M$ is a $d\times K$ matrix such that $M\vec{1}=0$ then the rank of $M$ is at most $r:=\min\{K-1,d\}$. Moreover, if we define the $d \times K$ matrix $\Mmax$ by 
\begin{equ}
\Mmax := \begin{cases} \begin{bmatrix} \vec{1} & \Xi_0 & 0 \end{bmatrix}  \quad \text{if $d \leq K-1$} \\  \\ 
\begin{bmatrix} \vec{1} &  \Xi_0 \\ 0 &  0 \end{bmatrix}  \quad \text{if $d \geq K-1$}  
\end{cases}
\end{equ}
where $\vec{1}$ is the $d\times 1$ vector of $1$'s and $\Xi_0$ is the $r \times r$ matrix 
\begin{equ}
\Xi_0 = \begin{bmatrix} 1 & 1 & \dots & 1 & -r \\
\vdots & \vdots & \vdots & \iddots & 0 \\
1 & 1 & -3 & \dots & 0\\ 
1 & -2 & 0 & \dots & 0 \\
-1 & 0 &  0 & \dots & 0 
\end{bmatrix}
\end{equ} 
%
then $M_0$ has the following properties
\begin{enumerate} 
\item[(i)]  $\Mmax \vec{1} = 0$, ${\rm{rank}}(\Mmax) = r$ and all nonzero eigenvalues of $\Mmax$ are simple.  
\item[(ii)] $\Mmax$ has the SVD $\Mmax = \Qmax \Lambdamax \Rmax$ where $\Qmax$ is the $d\times d$ identity matrix and the last row of $R_0$ is $\vec{1}^T$.
\item[(iii)] When $H$ is a diagonal matrix with descending diagonal entries, $(K-1)^{-1}\Lambdamax\Qmax^T H^T H \Qmax\Lambdamax$ has eigen-decomposition $G_0^T \Sigmamax G_0$, where $\Sigmamax$ has descending diagonal entries and is of rank $r_H : = \min (q,K-1)$, and $G_0$ is the $r\times r$ identity matrix.
\end{enumerate} 
%
%
\end{lem}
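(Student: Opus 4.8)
The plan is to verify each assertion by direct computation with the explicit matrix $\Mmax$, after first disposing of the general rank bound. If $M\vec 1=0$ then the $K$ columns of $M$ sum to zero, hence are linearly dependent, so $\mathrm{rank}(M)\le K-1$; together with the trivial bound $\mathrm{rank}(M)\le d$ this gives $\mathrm{rank}(M)\le r=\min\{K-1,d\}$, which is the opening claim.

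The one structural fact underlying everything else, which I would record first, is that the nonzero \emph{rows} of $\Mmax$ are mutually orthogonal. Reading off the columns $\vec 1$ and $\Xi_0$, the $i$-th such row has the shape $(\underbrace{1,\dots,1}_{m_i},\,-m_i,\,0,\dots,0)$ with $m_i=r-i+1$, so that $m_1=r>m_2>\dots>m_r=1$; in the case $d\ge K-1$ the last $d-K+1$ rows of $\Mmax$ vanish and play no role. For $i<j$ the entry $-m_j$ of row $j$ sits over a $1$ of row $i$ while the entry $-m_i$ of row $i$ sits over a $0$ of row $j$, so the inner product is $m_j\cdot 1+1\cdot(-m_j)=0$; and row $i$ has squared norm $m_i+m_i^2=m_i(m_i+1)$. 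Part~(i) follows at once: each row sums to zero, so $\Mmax\vec 1=0$; the $r$ nonzero rows are orthogonal and nonzero, hence linearly independent, so $\mathrm{rank}(\Mmax)=r$, matching the bound just proved; and $\Mmax\Mmax^{T}$ is diagonal with nonzero entries $m_i(m_i+1)$, $i=1,\dots,r$, which are strictly decreasing, so the nonzero singular values $\sqrt{m_i(m_i+1)}$ are pairwise distinct, i.e.\ simple.

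For (ii), row orthogonality lets me take $\Qmax=I_d$: set $\Lambdamax$ to be the $d\times K$ rectangular-diagonal matrix with $(\Lambdamax)_{ii}=\sqrt{m_i(m_i+1)}$ for $i\le r$, and let the first $r$ rows of $\Rmax$ be the normalized nonzero rows of $\Mmax$, so that $\Mmax=\Lambdamax\Rmax$ by construction. Since $\Mmax\vec 1=0$, the vector $\vec 1$ lies in the orthogonal complement of the row space of $\Mmax$, hence is orthogonal to those first $r$ rows, so $\vec 1/\sqrt K$ (the unit normalization being forced by orthogonality of $\Rmax$) may be taken as the last row of $\Rmax$; filling in rows $r+1,\dots,K-1$ arbitrarily to complete an orthonormal basis makes $\Rmax$ orthogonal and yields a genuine SVD $\Mmax=\Qmax\Lambdamax\Rmax$. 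For (iii), since $\Qmax=I_d$, the $r\times r$ matrix whose eigen-decomposition $G_0^{T}\Sigmamax G_0$ is required in step~2 of the EAKF formulation in Section~\ref{sec:formulationEAKF} is simply $(K-1)^{-1}\,\mathrm{diag}\big(m_i(m_i+1)\,h_i^{2}\big)_{i=1}^{r}$, where $h_1\ge\dots\ge h_d\ge 0$ are the diagonal entries of $H$. Being already diagonal, it has eigen-decomposition with $G_0=I_r$ and $\Sigmamax$ equal to it; its entries are weakly decreasing because $m_i(m_i+1)$ is strictly decreasing and $h_i^{2}$ is nonnegative and weakly decreasing, and the number of nonzero entries is $\#\{\,i\le r:h_i>0\,\}=\min(q,r)$, which equals $\min(q,K-1)=r_H$ since $q\le d$ and $r=\min(K-1,d)$.

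All the computations are elementary; the only step that needs genuine care is the bookkeeping in part~(iii) — confirming that the descending order of $\Sigmamax$ survives multiplying the strictly decreasing squares $m_i(m_i+1)$ by the merely weakly decreasing and possibly vanishing diagonal of $H^{T}H$, and that the rank collapses to precisely $\min(q,K-1)$ in every configuration of $q$, $d$, $K$. Throughout I would carry the two block cases $d\le K-1$ and $d\ge K-1$ in parallel, since once the trivial zero rows are discarded in the second case every argument above is word-for-word the same.
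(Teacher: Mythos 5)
Your proof is correct and follows essentially the same route as the paper's (much terser) argument: the paper also reduces everything to the row orthogonality of $\Mmax$, the resulting identity $\Mmax\Mmax^{T}=\mathrm{diag}\{r(r+1),\dots,2\cdot 3,1\cdot 2,0,\dots\}$, and then ``direct verification'' for (ii) and (iii), which is precisely the bookkeeping you carry out. Your column-dependence argument for $\mathrm{rank}(M)\le K-1$ is trivially equivalent to the paper's observation that $M^{T}M$ annihilates $\vec 1$, and your explicit handling of the normalization of the last row of $\Rmax$ and of the rank count $\min(q,K-1)$ is, if anything, more careful than the original.
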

\begin{proof}
It is elementary to see that rank$(M)\leq d$. Also, since $M$ has the same rank as  $M^TM$, while $M^TM$ has $\vec{1}$ as a null right vector, so rank$(M)\leq K-1$. For the claims for $\Mmax$, direct verification will be sufficient, where  the orthogonality between different rows of $\Mmax$ easily leads to
\[
\Mmax\Mmax^{T}=\text{diag}\{ r(r+1),\ldots, 2\times 3, 1\times 2,0\ldots\}
\]
Clearly their spectrums are as requested. The claims for $G_0$ and $\Sigmamax$ easily follow from direct verification. 
\end{proof}

\subsubsection{Differentiability of EAKF near the intermediate point}
The intermediate point $y^*$ was chosen to ensure stability of the eigenvalues of $\Shat^T \Shat$ and $\Lambda_1 Q_1^T H^T H \Lambda_1 Q_1$ as a function of $y$, in a neighborhood of the intermediate point. We will now demonstrate this.  

%

\begin{lem}
\label{lem:diffEAKF}
We can construct an EAKF update map $\Gamma$ which is continuously differentiable in a neighborhood of the intermediate point $y^*$. 
\end{lem}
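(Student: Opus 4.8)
The plan is to construct an EAKF update map $\Gamma$ by making, once and for all, a fixed (locally smooth) choice of the singular value decomposition appearing in Section~\ref{sec:formulationEAKF}, and to verify that with this choice $\Gamma$ is $C^1$ near the intermediate point $y^*$ defined in \eqref{e:EAKF_int}. The key structural point, inherited from Lemma~\ref{lem:onepoint}, is that at $y^*$ the forecast spread matrix equals $\Shat^* = \Mmax$, which has maximal rank $r = \min\{K-1,d\}$ and whose nonzero singular values are \emph{simple}; likewise the matrix $(K-1)^{-1}\Lambda_1 Q_1^T H^T H Q_1 \Lambda_1$ has simple nonzero eigenvalues with the zero eigenvalue of the correct fixed multiplicity. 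Simplicity of eigenvalues is exactly the hypothesis under which the standard analytic perturbation theory of Kato \cite{Kat82} guarantees that eigenvalues and the corresponding spectral projections (hence a smooth choice of eigenvectors) depend $C^1$ — indeed real-analytically — on the matrix entries in a neighborhood.

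The steps, in order, would be as follows. First I would reduce everything to a neighborhood of $y^*$ where $\Shat$ (a polynomial, hence $C^1$, function of $y_1$ via the forecast map and the centering $\Shat = (\Vhat^{(1)}-\Vhatbar,\dots)$) stays close to $\Mmax$. Second, since $\Mmax\Mmax^T = \mathrm{diag}\{r(r+1),\dots,1\cdot 2,0,\dots\}$ has $r$ simple positive eigenvalues and $0$ with multiplicity $d-r$, I would invoke analytic perturbation theory to produce, near $y^*$, smooth maps $y\mapsto \Lambda_1(y)$ (the positive singular values, simple and separated), $y\mapsto Q_1(y)$, $y\mapsto R_1(y)$ realizing the decomposition $\Shat = Q_1\Lambda_1 R_1$ of \eqref{eqn:SVD}; the individual eigenvectors can be pinned down smoothly by fixing a sign/phase convention (e.g. requiring a designated nonzero coordinate of each to be positive, using that $\Qmax = I_d$ at $y^*$). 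Third, I would apply the same perturbation argument to the $k\times k$ symmetric matrix $(K-1)^{-1}\Lambda_1 Q_1^T H^T H Q_1 \Lambda_1$, which at $y^*$ equals $G_0^T\Sigmamax G_0$ with $G_0 = I_r$ and $\Sigmamax$ having $r_H = \min(q,K-1)$ simple positive diagonal entries followed by zeros; this yields smooth $y\mapsto D_1(y)$, $y\mapsto G_1(y)$. Fourth, I would plug these smooth pieces into the explicit formula \eqref{sys:EAKFspreaddeficient} for $A(\Shat)$, noting $\Lambda_1^{-1}$ and $(I+D_1)^{-1/2}$ are smooth functions of the (bounded-away-from-degeneracy) quantities involved, so that $A(\Shat)$, $S = A(\Shat)\Shat$, and finally $\Gamma$ in \eqref{e:Gamma_etkf} are $C^1$ near $y^*$; the Kalman-gain term $\Chat H^T(I+H\Chat H^T)^{-1}(H\Vhatbar - Z)$ is manifestly $C^1$ in $y$ as in Theorems~\ref{thm:EnKFergo} and \ref{thm:ETKFergo}.

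The main obstacle is the non-uniqueness and potential non-smoothness of the SVD in the rank-deficient case: when $K-1 < d$ (or $q < K-1$) the zero eigenvalue of $\Shat\Shat^T$ has positive multiplicity, and a generic perturbation of $y$ could split off the $Q_2$-block in a direction that makes any global continuous choice of singular vectors impossible, or could coalesce singular values if $\Mmax$ were chosen carelessly. The point of Lemma~\ref{lem:onepoint} — that the \emph{nonzero} spectrum is simple and the zero block's dimension is locally constant (it cannot decrease by $\Shat\vec 1 = 0$, and cannot increase near $y^*$ by lower semicontinuity of rank) — is precisely what neutralizes this: the positive eigenvalues stay simple and separated from each other and from $0$ throughout a neighborhood, so Kato's theory applies to the "upper" block, while the kernel projection $Q_2 Q_2^T$ moves smoothly as a spectral projection for the isolated eigenvalue $0$ and never enters $A(\Shat)$ through the formula \eqref{sys:EAKFspreaddeficient} except via the well-defined projection $Q_1 Q_1^T$ and $\Lambda_1^{-1}$ on its complement. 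A secondary technical nuisance is the differentiability of the matrix square root $(I+D_1)^{-1/2}$ and, for the ETKF-style pieces, of $T(\Shat)$, but $I + D_1$ is uniformly positive definite near $y^*$, so the square root is analytic there (this is the content invoked via Lemma~\ref{lem:matrixhalf}). Once these smooth selections are fixed, the verification is a routine chain-rule bookkeeping.
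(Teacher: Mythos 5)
Your proposal is correct and follows essentially the same route as the paper: exploit the fact that at $y^*$ the nonzero singular values of $\Shat$ (and the nonzero eigenvalues of $(K-1)^{-1}\Lambda_1 Q_1^T H^TH Q_1\Lambda_1$) are simple while the kernel dimension is locally constant, invoke Kato-type perturbation theory to obtain $C^1$ selections of $Q_1,\Lambda_1,R_1,G_1,D_1$, and then observe that $(I+D_1)^{-1/2}$ and the Kalman-gain term are $C^1$. The only point where the paper is more explicit is in verifying that the independently selected left and right eigenbases are compatible, i.e.\ that $Q(y)^T\Shat(y)R(y)^T$ is diagonal with \emph{nonnegative} entries near $y^*$ (a continuity argument from the value at $y^*$), which your sign-convention remark covers only implicitly.
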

\begin{proof}
Recall that $\Gamma$ is defined by $\Gamma = (U,V^{(1)},\dots,V^{(K)})$ where
\[
V^{(k)} = \Vbar + S^{(k)}
\]
for each $k =1,\dots,K$ where $S^{(k)}$ is the $k$-th column of $S$ and 
\[
S=Q_1\Lambda_1 G_1^T(I+D_1)^{-1/2}R_1, \quad \overline{V}=\overline{\widehat{V}}-\widehat{C}H^T(I+H^T\widehat{C}H)^{-1}[H\overline{\widehat{V}}-Z]\;.
\]
The only unspecified parts of the map are the matrices $Q,\Lambda, R$ and $G$ (and thus $Q_1,\Lambda_1, R_1$ and $G_1$), hence these must be constructed. It suffices to show that all terms appearing in the above map, including the constructed terms $G_1,R_1$, are continuously differentiable as a function of $y = (U,\Vhat^{(1)},\dots,\Vhat^{(K)},Z)$ in some neighborhood of $y^*$. 
\par
Clearly $\overline{V}$, $\Shat$ and $\Shat^T\Shat$ depend smoothly on $y$, differentiability of $\overline{V}$ is shown in the proof of Theorem \ref{thm:EnKFergo}. The latter two can be directly seen from the definitions of the mean and spread update maps in \eqref{sys:ESRFmean} and \eqref{sys:EAKFspread}. 

We claim that there is a ${C}^1$ extension of $Q, R$ as $ Q(y), R(y)$ in a neighborhood of $y=y^*$ such that
\begin{enumerate}
\item $Q(y)$ and $R(y)$ are the eigenbasis matrices in the decomposition of $\Shat(y) \Shat^T(y)$ and $\Shat^T(y) \Shat(y)$ respectively.
\item There is a diagonal matrix $\Lambda(y)$ such that $\Shat(y) = Q(y) \Lambda (y) R^T(y)$. 
\end{enumerate} 
To verify the first claim, note that all $r$ nonzero eigenvalues of $\widehat{S}^T(y)\widehat{S}(y)$ are by construction simple at $y^*$, hence they are smooth with respect to $y$ by II.2.2 of \cite{Kat82}, and in particular stay positive, distinct and maintain the same descending order for $y$ near $y^*$. Moreover, because the rank of  $\widehat{S}^T(y)\widehat{S}(y)$  is at most $r$, $0$ will be an eigenvalue of multiplicity $K-r$ for $y$ near $y^*$. By Lemma \ref{lem:eigentransform}, there is a transformation matrix $U(y)$ that transforms the simple nonzero eigenvectors and null space of $\Shat^T(y^*)\Shat(y^*)$ to the ones of $\Shat^T(y)\Shat(y)$. So $R(y):= R_0U(y)^T$ provides an eigenbasis matrix for $\Shat(y)^T\Shat(y)$ for $y$ near $y^*$. Likewise we can also find a $Q(y)$ as the eigenbasis matrix for $\Shat(y) \Shat^T(y)$ for $y$ near $y^*$. 
\par
We will now check the second point of the claim. If there is another SVD, $\widehat{S}(y)=\widetilde{Q}\widetilde{\Lambda}\widetilde{R}$ at $y$ close to $y^*$ with $\widetilde{\Lambda}$ having descending eigenvalues, then $\widetilde{R}$ has its first $r$ rows being the eigenvectors of $\widehat{S}(y)^T\widehat{S}(y)$ associated with descending nonzero eigenvalues, and the remaining $K-r$ rows correspond to the basis of the null space. Therefore, $R(y)$ has the first $r$ rows being either identical to the ones of $\widetilde{R}$ or their additive inverse. Likewise we have the same conclusion for $Q(y)$. Then because the choice of eigenvectors for the null space are unimportant, which can be told from the fact that  $Q_2,R_2$ play no role in \eqref{eqn:SVD}, we have
\[
\Lambda(y):=Q(y)^T\widehat{S}(y)R(y)^T
\]
is diagonal, and each component is either the same as $\widetilde{\Lambda}$ or the additive inverse. Yet $\Lambda(y)$ is continuous with respect to $y$, so for there exists a neighborhood of $y^*$ so that they remain nonnegative, this indicates $\Lambda(y)=\widetilde{\Lambda}$ and completes the proof of the two claims. 

Similarly, we can find a $C^1$ function $G_1(y)$ defined in a neighborhood of $y^*$ such that $G_1(y^*) = I_r$ and such that $G_1(y)$ is an eigen-basis matrix for $\Lambda_1(y) Q_1(y)^T H^T H Q_1(y)\Lambda_1(y)$ with corresponding diagonal matrix $D_1(y)$, as in part $(ii)$ of the EAKF formulation.

Finally, the function $(I+D_1)^{1/2}$ is always $C^1$ in $y$. To see this, note that 
\[
D_1=(K-1)^{-1}G_1\Lambda_1 Q_1^TH^THQ_1\Lambda_1 G_1^T,
\] 
which is ${C}^1$ in $y$. But since $D_1$ is diagonal with entries nonnegative, it follows that $(I+D_1)^{-1/2}$ must also be $C^1$ in $y$. Hence all terms involved in $\Gamma$ are $C^1$ in a neighborhood of $y^*$ and the proof is complete.

\end{proof}

\subsubsection{Controllability of EAKF at the intermediate point}
\begin{lem}
\label{lem:controlEAKF}
The EAKF update map $\Gamma$ constructed in Lemma \ref{lem:diffEAKF} has its Jacobian at $y^*$ being non-degenerate. 
\end{lem}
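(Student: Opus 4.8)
The plan is to reduce the claim to a non-degeneracy statement for the spread update alone, and then to control that statement by separating an \emph{amplitude} perturbation, governed by the Riccati map for the posterior covariance, from a \emph{rotational} perturbation, governed by the bases fixed in Lemma \ref{lem:diffEAKF}. To set this up I would first change coordinates: since $\Gamma$ leaves the $U$-component untouched and $(\Vhat^{(1)},\dots,\Vhat^{(K)})\mapsto(\Vhatbar,\Shat)$ is a linear bijection of $\reals^{dK}$ (the last column of $\Shat$ being fixed by $\Shat\vec{1}=0$), it suffices to show that the Jacobian of $(U,\Vhatbar,\Shat)\mapsto(U,\Vbar,S)$ at $y^*$ is non-degenerate. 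In these coordinates $\Vbar$ depends only on $(\Vhatbar,\Chat,Z)$ and $S=A(\Shat)\Shat$ depends only on $\Shat$, so the Jacobian is block upper triangular with diagonal blocks $I_d$, $\CD_{\Vhatbar}\Vbar$ and $\CD_{\Shat}S$. The middle block equals $I-\Chat H^T(I+H\Chat H^T)^{-1}H=(I+\Chat H^TH)^{-1}$, which is invertible since $\Chat H^TH$ has non-negative spectrum; so everything reduces to showing $\CD_{\Shat}S|_{\Shat=\Mmax}$ is injective on the $d(K-1)$-dimensional space $\{\delta\Shat:\delta\Shat\vec{1}=0\}$ (a space it maps into itself, since $S\vec{1}=0$).

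To analyze $\CD_{\Shat}S|_{\Mmax}$, assume $\CD_{\Shat}S[\delta\Shat]=0$ and write $\CD_{\Shat}S[\delta\Shat]=(\CD A[\delta\Shat])\Mmax+A(\Mmax)\delta\Shat$. I would use two facts. \emph{(Amplitude.)} One has $SS^T=g(\Shat\Shat^T)$ for the Riccati map $g(X)=X-X\tilde H^T(\tilde H X\tilde H^T+I)^{-1}\tilde H X$, where $\tilde H$ is a fixed positive multiple of $H$; its differential $\CD g|_X[\delta X]=(I+X\tilde H^T\tilde H)^{-1}\,\delta X\,(I+\tilde H^T\tilde H X)^{-1}$ — first checked for invertible $X$ and then extended by continuity to all positive semi-definite $X$ — is injective. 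Since $0=\CD_{\Shat}(SS^T)[\delta\Shat]=\delta S\,S^T+S\,\delta S^T$, this forces $\delta(\Shat\Shat^T)=\delta\Shat\,\Mmax^T+\Mmax\,\delta\Shat^T=0$, i.e.\ $\delta\Shat\,\Mmax^T$ is skew-symmetric. \emph{(Stationarity.)} By Lemma \ref{lem:onepoint} the relevant singular values of $\Mmax$ and eigenvalues of $\Lambdamax\Qmax^TH^TH\Qmax\Lambdamax$ are simple, so by Lemma \ref{lem:diffEAKF} the maps $\Shat\mapsto Q_1,\Lambda_1,G_1,D_1$ are $C^1$ near $\Mmax$ and their first-order variations are determined by $\delta(\Shat\Shat^T)$; hence $\delta(\Shat\Shat^T)=0$ gives $\delta Q_1=\delta\Lambda_1=\delta G_1=\delta D_1=0$, so $\CD A[\delta\Shat]|_{\Mmax}=0$ and therefore $A(\Mmax)\delta\Shat=0$. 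Since at $y^*$ one has $G_1=I_r$ and $\Lambda_1$ invertible, $A(\Mmax)$ factors as $Q_1$ times an invertible $r\times r$ matrix times $Q_1^T$ with $Q_1$ of orthonormal columns, so $A(\Mmax)\delta\Shat=0$ gives $Q_1^T\delta\Shat=0$; that is, every column of $\delta\Shat$ lies in $(\mathrm{range}\,\Mmax)^{\perp}$.

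Combining the two facts finishes the argument. If $d<K-1$ then $\mathrm{rank}\,\Mmax=d$, so $(\mathrm{range}\,\Mmax)^{\perp}=\{0\}$ and $\delta\Shat=0$. If $d\ge K-1$, then $\delta\Shat\,\Mmax^T$ has range inside $(\mathrm{range}\,\Mmax)^{\perp}$ but, being skew-symmetric, also inside $\mathrm{range}\,\Mmax$, whence $\delta\Shat\,\Mmax^T=0$; since $\mathrm{range}\,\Mmax^T$ is the row space of $\Mmax$, which here is all of $\vec{1}^{\perp}$, and since $\delta\Shat\,\vec{1}=0$, we conclude $\delta\Shat=0$. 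In both cases $\CD_{\Shat}S|_{\Mmax}$ is injective, hence an isomorphism of $\{\delta\Shat:\delta\Shat\vec{1}=0\}$, and so $\CD_{y_1}\Gamma|_{y^*}$ is non-degenerate. (If one wishes to match the literal positive-determinant hypothesis of Lemma \ref{lem:twosteps}, note $\det\CD_{\Vhatbar}\Vbar>0$ and the sign of $\det\CD_{\Shat}S|_{\Mmax}$ can be pinned down by the same stationarity reduction; in any case only non-vanishing is used in that lemma's proof.)

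The hardest step is the stationarity fact: justifying rigorously, in the generically rank-deficient setting, that the SVD of $\Shat$ and the nested eigendecomposition $G_1^TD_1G_1$ admit $C^1$ choices near $y^*$ whose derivatives vanish once $\delta(\Shat\Shat^T)=0$. This is precisely where the construction of $\Mmax$ is used — it is chosen of maximal rank and with simple nonzero spectrum, so that the zero eigenvalue does not split into branches under perturbation — and where Lemma \ref{lem:diffEAKF}, together with analytic perturbation theory (e.g.\ \cite{Kat82}), carries the load; the null-space blocks $Q_2,R_2$ play no role and can be discarded throughout.
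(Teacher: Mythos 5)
Your proof is correct, but it takes a genuinely different route to the key step than the paper does. The paper fixes a direction $M=[u,\bar v\otimes\vec 1+BR_S]$ with $\mathcal{D}_M\Gamma=0$ and runs a nine-step elimination on the representation $S=Q_1\Lambda_1G_1^T(I+D_1)^{-1/2}R_1$: after showing $\mathcal{D}_M\Chat=0$ via the same Riccati-injectivity computation as your ``amplitude'' step, it must still track the perturbation of the right singular basis $R_1$ (which depends on $\Shat^T\Shat$, not on $\Shat\Shat^T$, and so does \emph{not} have vanishing derivative), computing $[\mathcal{D}_M(R_1)R_S^T]_{ij}=(\lambda_iB_{ij}+\lambda_jB_{ji})/(\lambda_i^2-\lambda_j^2)$ from the eigenvector perturbation formula and combining it with the relation $\lambda_jB_{ij}+\lambda_iB_{ji}=0$ coming from $B\Lambda_0^T+\Lambda_0B^T=0$. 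You avoid the $R_1$ bookkeeping entirely by writing $S=A(\Shat)\Shat$ and observing that $A$, as constructed in Lemma \ref{lem:diffEAKF}, factors through $\Shat\Shat^T$ alone, so that $\delta(\Shat\Shat^T)=0$ kills $\delta A$ and leaves only $A(\Mmax)\delta\Shat$; the conclusion then follows from the clean linear-algebra fact that a matrix whose columns lie in $(\mathrm{range}\,\Mmax)^{\perp}$, whose product $\delta\Shat\,\Mmax^T$ is skew-symmetric, and which annihilates $\vec 1$ must vanish. Your block-triangular reduction likewise replaces the paper's Steps 1--3 for $u$ and $\bar v$. What your approach buys is conceptual clarity --- it makes transparent that the only obstruction to injectivity lives in $(\mathrm{range}\,\Mmax)^{\perp}$ and hence why $\Mmax$ is chosen of maximal rank --- at the cost of leaning somewhat harder on the (true, but worth a sentence) fact that the $C^1$ branches of $Q_1,\Lambda_1,G_1,D_1$ in Lemma \ref{lem:diffEAKF} are functions of $\Shat\Shat^T$; the paper's version verifies this implicitly in its Steps 5, 7, 8 via the explicit perturbation formula \eqref{eqn:perturbeigen}. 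Two trivial remarks: your first case should read $d\le K-1$ rather than $d<K-1$ (harmless, since your second branch also disposes of $d=K-1$), and the equivalence $A(\Shat)\Shat=Q_1\Lambda_1G_1^T(I+D_1)^{-1/2}R_1$ that reconciles your formulation with the one differentiated in Lemma \ref{lem:diffEAKF} follows from $Q_1^T\Shat=\Lambda_1R_1$ and the commutativity of the diagonal factors.
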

\begin{proof}
Before proceeding, we recall some notation. Let $\Shat=\Shat(y)$ be the spread matrix constructed from $y$, similarly let $Q_1,\Lambda_1,R_1,G_1,D,D_1$ be the matrix valued functions of $y$ that were constructed for the purposes of the EAKF map in Lemma \ref{lem:diffEAKF}. In the proof below, we will frequently use the subscript $0$ to denote a matrix valued function being evaluated at the intermediate point $y=y^*$. 
\par
Recall from Lemma \ref{lem:onepoint} that the SVD of $\Shat(y^*) = M_0$ is given by 
\[
\widehat{S}(y^*)=I_d \Lambda_0 R_0=I_d \begin{bmatrix}\Lambda_{0,1} &0\\ 0 &0\end{bmatrix}\begin{bmatrix}R_{0,1}\\R_{0,2}\end{bmatrix}.
\]
Since the last row of $R_{0,2}$ is $\vec{1}$ by Lemma \ref{lem:onepoint}, we can also write the SVD decomposition as 
\[
I_d \Lambda_0 R_0=I_d \begin{bmatrix}\Lambda_{S} &0\\ 0 &0\end{bmatrix}\begin{bmatrix}R_{S}
\\ \vec{1}^T\end{bmatrix},
\]
where $R_S$ is the first $K-1$ rows of $R_0$, and $\Lambda_S$ is the upper $(K-1)\times (K-1)$ sub-block of $\Lambda_0$.  We are only interested in the derivatives of $\Gamma$ in the directions $y_1 = [U, \Vhat^{(1)},\ldots, \Vhat^{(K)}]$, these directions clearly form a $d(K+1)$ dimensional subspace. We can always write this subspace as 
%
%
%
\[
\mathcal{M} : = \{[u, \overline{v}\otimes \vec{1}_K+BR_S]: u,\overline{v}\in \mathbb{R}^d, B\in \mathbb{R}^{d\times (K-1)}\}.
\] 
Indeed, it suffices to see that any spread matrix $\Shat$ can be decomposed $\Shat = B R_S$ for some suitable $B$. 


To prove the lemma, suppose that there exists a direction $M \in \CM$ such that the derivative of $[U, V^{(1)},\dots,V^{(K)}]$ in the direction $M$ vanishes at $y^*$. We will show that $M$ must be zero. Write $M = [u,\overline{v}\otimes \vec{1}_K+BR_S]$. If the derivative vanishes at $y^*$, we must also have   
\[
\mathcal{D}_MU=\mathcal{D}_M \overline{V}=0,\quad \mathcal{D}_MS=0. 
\]
at $y^*$. Through the following steps, we will show that $M$ must be zero. In steps $1-3$ we show $u = \vbar = 0$ and in steps $4)-9)$ we show that $B = 0$.  

\begin{enumerate}[Step 1)]
\item $u=0$: This is a direct consequence of $\mathcal{D}_M U=u$.
\item If $\widehat{C}=\Shat\Shat^T$ then $\mathcal{D}_M \widehat{C}=0$ at $y^*$: Note that since $\mathcal{D}_M S=0$ at $y^*$, 
\[
\mathcal{D}_M [\widehat{C}-\widehat{C}H^T(H^T\widehat{C}H+I)^{-1}H\widehat{C}]
=\mathcal{D}_M(S S^T)=0,
\]
where $\widehat{C}=\widehat{S}\widehat{S}^T$ is clearly differentiable. Using the formula of the derivative of an inverse, we find the previous equation is equivalent to 
\begin{align}
\notag
0=&\mathcal{D}_M\widehat{C}-\mathcal{D}_M\widehat{C}H^T(H^T\widehat{C}H+I)^{-1}H\widehat{C}
-\widehat{C}H^T(H^T\widehat{C}H+I)^{-1}H\mathcal{D}_M\widehat{C}\\
\label{tmp:DMC}
&+\widehat{C}H^T(H^T\widehat{C}H+I)^{-1}H^T\mathcal{D}_M\widehat{C}H(H^T\widehat{C}H+I)^{-1}H\widehat{C}.
\end{align}
Now let $c_{ij}$ denote the entries of $\mathcal{D}_M \widehat{C}$ and let $\lambda_i$, $h_i$ denote the diagonal entries of the diagonal matrices $\Lambda_0$ and $H$ respectively. Since the right hand side of \eqref{tmp:DMC} has all matrices being diagonal except $\mathcal{D}_M \widehat{C}$, we can compute it explicitly, and find the $ij$-th entry is 
\[
(\lambda^2_ih_i^2+1)^{-1}(\lambda^2_jh_j^2+1)^{-1}c_{ij}.
\] 
Therefore \eqref{tmp:DMC} implies that $\mathcal{D}_M \widehat{C}=0$ at $y^*$.
 \item  $\overline{v}=\vec{0}$: Since $\mathcal{D}_M \overline{V}=0$ and $\overline{V}$ is given by 
\[
\overline{V}=\overline{\widehat{V}}-\widehat{C}H^T(I+H^T\widehat{C}H)^{-1}[H\overline{\widehat{V}}-Z]
\]
so using the fact that $\mathcal{D}_M\widehat{C}=0$ at $y^*$,
\[
\mathcal{D}_M \overline{V}=\mathcal{D}_M \overline{\widehat{V}}-\widehat{C}H^T(I+H^T\widehat{C}H)^{-1}[H\mathcal{D}_M\overline{\widehat{V}}]
\]
which at $y^*$ can be computed explicitly because the matrices are diagonal:
\[
(\mathcal{D}_M\overline{V})_i=(1+h_i^2\lambda_i^2)^{-1/2}(\mathcal{D}_M \overline{\widehat{V}})_i\quad \Rightarrow \quad 0=\mathcal{D}_M\overline{\widehat{V}}_i=\overline{v}. 
\]

\item $\lambda_jB_{ij}+\lambda_iB_{ji}=0$ and the diagonal terms of $B$ are zero: From here and after, the range of index $(i,j)$ is in $\{1,\ldots,d\}\times\{1,\ldots,K-1\}$. Our claim comes from the fact that $\widehat{C}=\Shat\Shat^T$ and $\mathcal{D}_M \Shat=BR_S$ so at $y = y^*$ we have
\[
0=\mathcal{D}_M\widehat{C}=(\mathcal{D}_M\widehat{S}) \widehat{S}^{T}(y^*)+\widehat{S}(y^*)(\mathcal{D}_M\widehat{S})^T=BR_0 \widehat{S}^{T}(y^*)+\widehat{S}(y^*) R_0^T B^T=B\Lambda^T_0 +\Lambda_0 B^T. 
\]
Then notice that $B$ is $d\times (K-1)$ dimensional, so its diagonal terms correspond to the first $r=\min\{d,K-1\}$ nonzero entries of $\lambda_i$, therefore they are zero.
\item $\mathcal{D}_M Q_1=0, \mathcal{D}_M \Lambda_1=0$ at $y^*$: Because $Q_1$ is formed by eigenvectors  of matrix $\widehat{C}$ associated with the nonzero simple eigenvalues, which are the entries of $\Lambda_1$, by Lemma \ref{lem:eigentransform} and formula \eqref{eqn:perturbU} we have our claim because $\mathcal{D}_M \widehat{C}=0.$
\item $[\mathcal{D}_M(R_1)R_{S}^T]_{ij}=(\lambda_iB_{ij}+\lambda_jB_{ji})/(\lambda_i^2-\lambda_j^2)$ at $y^*$ for $i\neq j$: by Lemma \ref{lem:eigentransform}, if we denote $r_i$ to be the  $i$-th row of $R_{0,1}$, while $\Psi_i$ being $e_i^T r_i$, then because $R_{0,2}^TR_{0,2}$ is the projection to the null space of $\Shat^T\Shat$ at $y^*$,
\[
\mathcal{D}_M r_i=r_i\mathcal{D}_M (\widehat{S}^T\widehat{S})\bigg[\lambda_i^{-2}R_{0,2}^TR_{0,2}+\sum_{k\neq i}^r(\lambda_i^2-\lambda_k^2)^{-1}\Psi_k^T\Psi_k\bigg].
\]
So for $1\leq j\leq r$, because $\Psi_k^T\Psi_k r_j^T=1_{k=j}r_j^T$, $R_{0,2}^TR_{0,2}r_j^T=0$, we find that 
\[
[\mathcal{D}_M(R_1)R_{S}^T]_{ij}
=\mathcal{D}_M(r_i)r_j^T=(\lambda_i^2-\lambda_j^2)^{-1}r_i\mathcal{D}_M (\widehat{S}^T\widehat{S})r_j^T. 
\]
For $r\leq j\leq K-1$, because $\Psi_k^T\Psi_k r_j^T=0$, $R_{0,2}^TR_{0,2}r_j^T=r_j^T$, we find that 
\[
[\mathcal{D}_M(R_1)R_{S}^T]_{ij}
=\mathcal{D}_M(r_i)r_j^T=\lambda_i^{-2}r_i\mathcal{D}_M (\widehat{S}^T\widehat{S})r_j^T. 
\]
Then using $r_iR_{S}^T=e_i$, the row with zero component except being $1$ on $i$-th coordinate, 
\[
r_i\mathcal{D}_M(\Shat^T\Shat)r_j^T=r_i(R_{S}^TB^T\Lambda_0 R_{S}+R_{S}^T\Lambda_0^TB R_{S})r_j^T
=\lambda_j e_iB^Te_j^T+\lambda_i e_iBe_j^T,
\]
which concludes our claim.

\item $\mathcal{D}_M G_1=0, \mathcal{D}_M D_1=0$ at $y^*$: Because $G_1^TD_1G_1$ is the eigenvalue decomposition of $(K-1)^{-1}\Lambda_1G_1^TH^THG_1\Lambda_1$, due to Step 5), the derivative in the direction $M$ must vanish at $y^*$, so by Lemma \ref{lem:eigentransform} we have our claim.

\item $\mathcal{D}_M D=0$ and $\mathcal{D}_M D_1=0$ at $y^*$: recall that $G^TDG$ is the eigen-decomposition of $\Shat H^TH\Shat$ and $D_1$ is the upper $r\times r$ block of $D$. Notice that $D$ has  its entries being the eigenvalues of $\Shat^T H^TH\Shat$, which is the same as $H^T\Shat\Shat^TH=H^T\Chat H$, but from Step 2) $\mathcal{D}_M\Chat=0$ at $y^*$, so we have our claim. 

\item $B=0$: By step 4), we only need to verify non-diagonal entries. From $\mathcal{D}_M S=0$, $S=Q_1\Lambda_1G_1^T(I+D_1)^{-1/2}R_1$ and results from Steps 4), 6), 7), we have
\[
0=Q_1\Lambda_1 G_1^T(I+D_1)^{-1/2}\mathcal{D}_M R_1
\]
which by $Q_1\Lambda_1=\Lambda_1$ which is invertible at $y^*$ and $G_1=I_r$ at $y^*$ leads to 
\[
0=(I+D_1)^{-1/2}(\mathcal{D}_M R_1)R_S^T
\]
at $y^*$. Because  $(I+D_1)^{-1/2}$ is a diagonal matrix with entries $(1+\lambda_i^2h_i^2)^{-1/2}$, using the results from Step 6),  the $(i,j)$-th entry of the right hand side is
\[
\frac{\lambda_iB_{ij}+\lambda_jB_{ji}}{\sqrt{1+\lambda_i^2h_i^2}(\lambda_i^2-\lambda_j^2)},
\]
 so $\lambda_iB_{ij}+\lambda_jB_{ji}=0$ for $i\neq j$. We claim that $\lambda_i\neq \lambda_j$ for $i\neq j$, this is because $\lambda_i=\lambda_j$  only when they are both zeros, yet either $i\leq d=r$ or $j\leq K-1=r$, and hence $\lambda_i$ or $\lambda_j$ is not zero.  Recall that from Step 4) we have that $\lambda_jB_{ij}+\lambda_iB_{ji}=0$, combining it with our results that $\lambda_iB_{ij}+\lambda_jB_{ji}=0$ and $\lambda_i\neq \lambda_j$, this can only be possible when $B_{ij} = 0$. 
\end{enumerate}
Therefore, the Jacobian of mapping $\Gamma$ can not be degenerate at $y^*$. 
\end{proof}
We now have the ingredients required to prove our Theorem \ref{thm:EAKFergo}

\begin{proof}[Proof of Theorem \ref{thm:EAKFergo}]
In light of Lemmas \ref{lem:diffEAKF} and \ref{lem:controlEAKF}, the result follows identically to the proof of Theorem \ref{thm:ETKFergo}. 
\end{proof}

\section{Conclusion and Discusion}
\label{sec:conclude}
In the preceding pages, we have established a simple analytic framework for validating two important nonlinear stability properties of ensemble based Kalman filters, namely boundedness and geometric ergodicity for the signal-ensemble process. These are important properties in practice, as they guarantee the filter processes will remain bounded on an infinite time horizon and that initialization errors in the algorithm dissipate exponentially quickly in time. 
\par
In Section \ref{sec:stabobs}, upper bounds for the signal-ensemble processes are obtained via a simple Lyapnuov function argument. In particular we show that, if the signal satisfies the \emph{observable energy criterion}, introduced in Assumption \ref{ass:energy_obs}, then one can construct a Lyapnuov function for the signal-ensemble process. The sub-level sets for this Lyapnuov function are only compact in the observed directions. Hence this can be thought of as an upper bound for the observable ensemble $\{H V^{(k)}_n\}_{k=1}^K$. This Lyapnuov function argument is used to construct upper bounds for EnKF, ETKF and  EAKF.
\par
Section \ref{sec:range} discusses the applicability  of the observable energy criterion. Heuristically speaking, systems with strong dissipation in kinetic energy and complete observations as well as suitable spatial observations will satisfy the observable energy criterion,  therefore their EnKF and ESQF ensemble will be bounded uniformly in time. On the other hand, systems without observable energy criterion have the potential to diverge to machine infinity in finite time, which is known as  catastrophic filter divergence. This phenomenon has been captured in previous numerical experiments \cite{MH08, MH12, GM13}. In a separate article, the authors have constructed a concrete nonlinear system with kinetic energy dissipation but without observable energy dissipation, whose EnKF ensemble diverges to infinity exponentially fast with large probability \cite{KMT15}.
\par
In Section \ref{sec:ergo}, geometric ergodicity is established for the signal-ensemble processes of EnKF, ETKF and EAKF. This is achieved by combining the existence of a Lyapnunov function (with compact sub-level sets) with a minorization condition. The existence of a Lyapnuov function can be guaranteed using the results from Section \ref{sec:ergo}, but any other Lyapnuov function would suffice.    
\par

%

While our results have shown that systems with dissipative observable energy will produce stable filter processes, it is hard to believe that Assumption \ref{aspt:energydisc} is necessary for stability. 
As discussed earlier, numerical evidence suggests that ensemble based Kalman filters are typically  very stable and catastrophic filter divergence only happens when the observations are sparse. The stability and ergodicity of the filter processes here are inherited from the original nonlinear system. This type of inheritance phenomenon generally exists for many filter processes, for example Kalman filter preserves linear stability, and optimal filters preserves absolute regularity of general Markov processes \cite{Jaz72,RGY99, TvH12, TvH14}. In order to extend our results, one might seek new dynamical properties that can be inherited through the Kalman assimilation step. 
\par
In obtaining our results, we use few properties of the forecast covariance matrix other than positivity. On the one hand this lends generality to our results, in that the upper bounds hold for a broad class of ensemble methods, including covariance inflation schemes. On the other hand, if we had more control over the covariance one might hope to gain more control over the filter ensemble. Thus, it is quite natural to ask whether  one can ``inflate'' the covariance adaptively in order to guarantee stability properties, even in the case where the observation $H$ is of low rank. This question will be investigated by the authors in a subsequent paper \cite{TMK15}.  Similar nonlinear stability and ergodicity results as developed here are also valid for finite ensemble Kalman filters which are continuous in time \cite{KLS14} and will be reported elsewhere. 
\par
%
%
%
\ack
This research is supported by the MURI award grant N-000-1412-10912, where A.J.M. is the principal investigator, while X.T.T. is supported as a postdoctoral fellow. D.K. is supported as a Courant Instructor. We thank Ian Grooms and Tom Trogdon for their discussions over some topics of this paper.

\appendix
\section{Elementary claims}
\begin{lem}
\label{lem:inver}
Let $A$ be a positive semidefinite symmetric matrix, then the following holds:
\[
0\preceq A(A+I)^{-1}\preceq I,\quad 0\preceq (A+I)^{-1}\preceq I,\quad 0\preceq (A+I)^{-1}\preceq A^{-1}\;,
\]
where $A \preceq B$ means that $B-A $ is positive semidefinite. 
\end{lem}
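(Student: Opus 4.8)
The plan is to reduce every claimed matrix inequality to an elementary scalar inequality by simultaneous diagonalization. Since $A$ is symmetric and positive semidefinite, write $A = U\Lambda U^T$ with $U$ orthogonal and $\Lambda = \mathrm{diag}(\lambda_1,\dots,\lambda_d)$, $\lambda_i \ge 0$. Then $A+I = U(\Lambda+I)U^T$ has eigenvalues $\lambda_i + 1 \ge 1$, so it is invertible and $(A+I)^{-1} = U(\Lambda+I)^{-1}U^T$, while $A(A+I)^{-1} = U\Lambda(\Lambda+I)^{-1}U^T$. Thus all matrices occurring in the statement are diagonalized by the same orthogonal $U$; consequently a matrix $B = UDU^T$ with $D$ diagonal is positive semidefinite if and only if all diagonal entries of $D$ are nonnegative, and $B \preceq C$ for $C = UEU^T$ (same $U$) if and only if $E-D$ has nonnegative diagonal entries.

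With this reduction in hand, the three claims become, for each index $i$: $0 \le \lambda_i/(\lambda_i+1) \le 1$; $0 \le 1/(\lambda_i+1) \le 1$; and $1/(\lambda_i+1) \le 1/\lambda_i$. The first two are immediate from $\lambda_i \ge 0$, using $\lambda_i/(\lambda_i+1) = 1 - 1/(\lambda_i+1)$. The third follows from $\lambda_i < \lambda_i + 1$, which requires $\lambda_i > 0$; this is consistent with the fact that the appearance of $A^{-1}$ in the statement presupposes $A$ invertible. Conjugating these entrywise inequalities back by the common $U$ recovers the three matrix inequalities.

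There is essentially no obstacle here. The only point requiring a moment's care is the remark that $A$, $(A+I)^{-1}$, and $A(A+I)^{-1}$ are all functions of $A$, hence commute and share the eigenbasis $U$; this is precisely what legitimizes the termwise comparison. (Alternatively one could invoke operator monotonicity of $x \mapsto x/(x+1)$ and $x \mapsto -1/(x+1)$, but for finite symmetric matrices the diagonalization argument is the most transparent.)
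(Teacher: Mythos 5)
Your proof is correct and follows essentially the same route as the paper's: orthogonal diagonalization of $A$ and reduction of each matrix inequality to an elementary scalar inequality in the eigenvalues. The paper additionally uses the identity $A(A+I)^{-1}+(A+I)^{-1}=I$ to deduce the first claim from the second, but this is a cosmetic difference; your explicit remark that the third inequality presupposes $A$ invertible is a welcome clarification the paper omits.
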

\begin{proof}
Since $A(A+I)^{-1}+(A+I)^{-1}=I$, it suffices to show $0\preceq (A+I)^{-1}\preceq I$. Since $A$ is positive semidefinite and symmetric, it can be diagonalized through an orthogonal matrix $\Psi,$ i.e. $A=\Psi D \Psi^T$ with $D$ being diagonal. Then based on $(A+I)^{-1}=\Psi (D+I)^{-1}\Psi^T$, it is elementary to conclude our lemma. 
\end{proof}
\begin{lem}
\label{lem:young}
By Young's inequality, for any $\epsilon>0$ and $x,y\in \mathbb{R}^d$, the following holds:
\[
|x+y|^2=|x|^2+|y|^2+2\langle x, y\rangle\leq (1+\epsilon^2) |x|^2+(1+\epsilon^{-2})|y|^2. 
\]
\end{lem}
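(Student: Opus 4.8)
The plan is to reduce everything to the polarization identity together with the weighted arithmetic--geometric mean inequality. First I would write out the square using bilinearity of the Euclidean inner product, namely $|x+y|^2 = |x|^2 + |y|^2 + 2\langle x,y\rangle$, which is the equality already asserted in the statement and requires no argument. Thus the whole content of the lemma is the bound on the cross term $2\langle x,y\rangle$.

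Next I would control this cross term in two steps. By the Cauchy--Schwarz inequality, $\langle x,y\rangle \le |x|\,|y|$, so $2\langle x,y\rangle \le 2|x|\,|y|$. Then I would invoke the elementary inequality $2ab \le \epsilon^2 a^2 + \epsilon^{-2} b^2$, valid for all $a,b\ge 0$ and all $\epsilon>0$, which follows at once from expanding $(\epsilon a - \epsilon^{-1} b)^2 \ge 0$. Applying this with $a=|x|$ and $b=|y|$ gives $2|x|\,|y| \le \epsilon^2|x|^2 + \epsilon^{-2}|y|^2$, hence $2\langle x,y\rangle \le \epsilon^2|x|^2 + \epsilon^{-2}|y|^2$.

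Combining the polarization identity with this last bound yields
\[
|x+y|^2 \;\le\; |x|^2 + |y|^2 + \epsilon^2|x|^2 + \epsilon^{-2}|y|^2 \;=\; (1+\epsilon^2)|x|^2 + (1+\epsilon^{-2})|y|^2,
\]
which is exactly the claimed estimate.

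There is no real obstacle here: the statement is entirely elementary and the argument is three lines. The only point meriting a moment's attention is choosing the form of the weighted AM--GM step so that the weights distribute as $1+\epsilon^2$ and $1+\epsilon^{-2}$ rather than as some other admissible split; picking the square $(\epsilon|x| - \epsilon^{-1}|y|)^2 \ge 0$ produces precisely the constants quoted in the statement.
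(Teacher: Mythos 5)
Your proof is correct and is exactly the standard argument the paper implicitly relies on (the paper states this lemma without a written proof, simply citing Young's inequality). Expanding the square, applying Cauchy--Schwarz, and bounding the cross term via $(\epsilon|x|-\epsilon^{-1}|y|)^2\ge 0$ gives precisely the stated constants, so nothing is missing.
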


\begin{lem}
\label{lem:trinv}
For any two $d\times d$ positive semidefinite symmetric matrices $A$ and $B$, the following holds:
\[
\text{tr}(A(A+B)^{-1})=\text{tr}((A+B)^{-1}A)
\leq \text{tr}((A+B)^{-1}(A+B))= d. 
\]
\end{lem}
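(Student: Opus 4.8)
The plan is to dispatch the two equalities with elementary trace identities and then reduce the inequality to the fact that the trace of a product of two positive semidefinite matrices is nonnegative.

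First, the equality $\text{tr}(A(A+B)^{-1}) = \text{tr}((A+B)^{-1}A)$ is immediate from the cyclic invariance of the trace, and $\text{tr}((A+B)^{-1}(A+B)) = \text{tr}(I_d) = d$ is immediate once one notes that $A+B$ is invertible — this is implicit in the statement and holds, for instance, whenever $A+B$ is positive definite, which is the situation in which the lemma is applied.

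For the inequality, I would write
\[
\text{tr}((A+B)^{-1}(A+B)) - \text{tr}((A+B)^{-1}A) = \text{tr}\big((A+B)^{-1}\big((A+B)-A\big)\big) = \text{tr}((A+B)^{-1}B),
\]
so it suffices to check $\text{tr}((A+B)^{-1}B)\geq 0$. Since $A+B$ is positive definite it admits a symmetric positive definite square root, and by cyclicity $\text{tr}((A+B)^{-1}B) = \text{tr}\big((A+B)^{-1/2}B(A+B)^{-1/2}\big)$. The matrix $(A+B)^{-1/2}B(A+B)^{-1/2}$ is a congruence of the positive semidefinite matrix $B$, hence itself positive semidefinite, so its trace is a sum of nonnegative eigenvalues and therefore nonnegative. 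This yields $\text{tr}((A+B)^{-1}A)\leq d$, completing the proof.

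There is no substantial obstacle here; the only point that requires a moment's care is that the product $(A+B)^{-1}B$ need not be symmetric, so one cannot argue directly about its eigenvalues — the congruence trick above (or, alternatively, diagonalizing $A+B$ by an orthogonal matrix and mimicking the entrywise computation used in Lemma \ref{lem:inver}) circumvents this.
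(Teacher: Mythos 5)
Your proof is correct. The paper states this lemma without any proof (it is treated as elementary, in the same spirit as Lemma \ref{lem:inver}), so there is no argument of the authors' to compare against; your route — cyclicity of the trace for the two equalities, then reducing the inequality to $\text{tr}((A+B)^{-1}B)\geq 0$ via the congruence $(A+B)^{-1/2}B(A+B)^{-1/2}\succeq 0$ — is a complete and standard justification, and your remark that invertibility of $A+B$ is implicit in the statement is the right way to handle the only loose end.
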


\begin{lem}
\[
S=\widehat{S}[I_K+(K-1)^{-1}\widehat{S}^TH^TH\widehat{S}]^{-1/2}\Theta(\vec{F})[\widehat{S}\widehat{S}^T]^{-1/2}\widehat{S}
\]
\end{lem}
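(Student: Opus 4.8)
The plan is to view $S$ as one canonical square root of the Kalman posterior covariance, twisted by an orthogonal transformation that records the only remaining freedom, and to read the factor $[\widehat{S}\widehat{S}^T]^{-1/2}\widehat{S}$ as the partial isometry that carries this twist from the abstract $\reals^K$ in which the ETKF-type transform naturally lives back onto the geometry of $\mathrm{range}(\widehat{S})\subseteq\reals^d$.

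First I would simplify the right-hand side of the spread condition \eqref{sys:ESRFspread}, which is just the Kalman covariance update \eqref{eqn:postprior}. Substituting $\widehat{C}=(K-1)^{-1}\widehat{S}\widehat{S}^T$ and applying the Woodbury (push-through) identity, with Lemma \ref{lem:inver} supplying the needed invertibility, one obtains
\[
\widehat{C}-\widehat{C}H^T(H^T\widehat{C}H+I)^{-1}H\widehat{C}=\tfrac{1}{K-1}\,\widehat{S}M^{-1}\widehat{S}^T,\qquad M:=I_K+(K-1)^{-1}\widehat{S}^TH^TH\widehat{S}\succeq I_K.
\]
Thus the canonical choice $S_\star:=\widehat{S}M^{-1/2}$ already satisfies \eqref{sys:ESRFspread}. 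Since $\widehat{S}\vec{1}=0$ forces $\vec{1}$ to be an eigenvector of $M$ with eigenvalue $1$, one also has $M^{-1/2}\vec{1}=\vec{1}$, hence $S_\star\vec{1}=0$, and $S_\star$ shares the column space of $\widehat{S}$ — exactly the structural properties that any legitimate posterior spread matrix must keep.

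Next I would argue that any admissible $S$, meaning one solving \eqref{sys:ESRFspread} with $S\vec{1}=0$, has $SS^T=S_\star S_\star^T$, so that, comparing singular value decompositions, $S=S_\star O$ for a suitable orthogonal $O$; the constraint $S\vec{1}=0$ then pins $O$ down to those whose essential action is a rotation of the row space of $\widehat{S}$, and conversely each such $O$ returns an admissible $S$. Writing $\Theta(\vec F)$ for a parametrization of this family and introducing the partial isometry $W:=[\widehat{S}\widehat{S}^T]^{-1/2}\widehat{S}$ — which satisfies $\widehat{S}=[\widehat{S}\widehat{S}^T]^{1/2}W$ together with $WW^T=P_{\mathrm{range}(\widehat{S})}$ and $W^TW=P_{\mathrm{row}(\widehat{S})}$ — one inserts the identity $W^TW$ between $M^{-1/2}$ and $O$ and uses these projection relations to rearrange $S=\widehat{S}M^{-1/2}O$ into the claimed shape
\[
S=\widehat{S}\big[I_K+(K-1)^{-1}\widehat{S}^TH^TH\widehat{S}\big]^{-1/2}\Theta(\vec F)\,[\widehat{S}\widehat{S}^T]^{-1/2}\widehat{S}.
\]
The converse — that the right-hand side always obeys \eqref{sys:ESRFspread} — is then a short computation of $SS^T$ using $WW^T\widehat{S}=\widehat{S}$ and $M^{-1/2}\vec{1}=\vec{1}$.

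The hard part will be the rank-deficient regime $K-1<d$, where $\widehat{S}\widehat{S}^T$ is singular and $[\widehat{S}\widehat{S}^T]^{-1/2}$ has to be read as a Moore--Penrose pseudoinverse. One must then check that the right-hand side is insensitive to the sign and basis ambiguities of the singular value decomposition \eqref{eqn:SVD}, that the orthogonal factor $\Theta(\vec F)$ indeed acts only on the at most $(K-1)$-dimensional row space of $\widehat{S}$, and that the parametrization exhausts all admissible updates. This is precisely the delicacy already flagged around the degenerate-case formula \eqref{sys:EAKFspreaddeficient}, and handling it with care is what prevents the updated spread from corrupting the posterior covariance in such degenerate scenarios.
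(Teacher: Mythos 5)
You should know that the paper does not actually prove this statement: the lemma is an abandoned fragment. It carries no hypotheses, never defines $\Theta(\vec{F})$, is cited nowhere else in the text, and its ``proof'' stops after introducing a basis $N$ of $\ker\widehat{S}$ and asserting two spectral decompositions, the second of which cannot be correct as written ($I_K+(K-1)^{-1}\widehat{S}^TH^TH\widehat{S}$ is positive definite and has no zero block, and a basis of $\ker\widehat{S}\subseteq\reals^K$ cannot diagonalize the $d\times d$ matrix $\widehat{S}\widehat{S}^T$). So there is no paper route to compare against; your proposal is the more complete of the two, and it follows the standard classification of ensemble square root updates (cf.\ \cite{TAB03}): the push-through identity giving $\widehat{C}-\widehat{C}H^T(H\widehat{C}H^T+I)^{-1}H\widehat{C}=\tfrac{1}{K-1}\widehat{S}M^{-1}\widehat{S}^T$ is correct, $M\vec{1}=\vec{1}$ does give $S_\star\vec{1}=0$, and the step from $SS^T=S_\star S_\star^T$ to $S=S_\star O$ with $O$ orthogonal is standard.

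Two points need more care than your sketch gives them. First, dimension counting forces $\Theta(\vec{F})$ to be $K\times d$, so ``inserting $W^TW$'' is not by itself enough. You need the observation that $\widehat{S}^TH^TH\widehat{S}$ maps into $\mathrm{row}(\widehat{S})$ and vanishes on $\ker(\widehat{S})$, so $M^{-1/2}$ preserves both subspaces and acts as the identity on the latter; hence $\widehat{S}M^{-1/2}=\widehat{S}M^{-1/2}W^TW$, and an orthogonal $O$ that (without loss of generality) preserves $\mathrm{row}(\widehat{S})$ satisfies $WO=\Theta''W$ for a $d\times d$ orthogonal $\Theta''$ acting on $\mathrm{range}(\widehat{S})$. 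This yields $S=\widehat{S}M^{-1/2}(W^T\Theta'')W$ and identifies $\Theta(\vec{F})=W^T\Theta''$, matching the claimed shape. Second, in the rank-deficient case $[\widehat{S}\widehat{S}^T]^{-1/2}$ must be read as a pseudo-inverse and you must verify that only $P_{\mathrm{row}(\widehat{S})}O$ affects the value of $S$, so the genuine parameter is a rotation of an $r$-dimensional space with $r=\mathrm{rank}(\widehat{S})\le\min(d,K-1)$; you flag this but it is where the actual work lies. With those repairs your argument closes, and in effect you have supplied both the missing hypotheses and the missing proof.
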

\begin{proof}
Let $N$ be a matrix with its columns form an orthogonal basis of the null space of $\widehat{S}$, i.e. 
\[
N=[e_1,\ldots, e_u], \quad \widehat{S}N=0,\quad n+\text{rank}(\vec{F})=d. 
\] 
Then we can find $F$ and $X$ such that
\[
\widehat{S}\widehat{S}^T=[N,F]\begin{bmatrix}0&0\\ 0 &\Sigma \end{bmatrix}\begin{bmatrix}N^T\\F^T\end{bmatrix},\quad
I_K+(K-1)^{-1}\widehat{S}^TH^TH\widehat{S}=[N,F]\begin{bmatrix}0&0\\ 0 &\Sigma \end{bmatrix}\begin{bmatrix}N^T\\F^T\end{bmatrix}
\]
\end{proof}
\section{Discrete time formulation}
\label{sec:discrete}
In many applications, the underlying dynamical system is given by an ODE or SDE, which in general can be written as 
\[
du_t=\psi(u_t)+\Sigma dW_t.
\]
However, the noisy observations of these systems are usually made not in continuous time, but rather sequentially in time, with time interval $h$. Therefore it is natural to see the stochastic process instead as a stochastic sequence $U_n=u_{nh}$ as we do in Section \ref{sec:model}. Theoretically speaking, we can always transform the SDE for $u_t$ into the nonlinear update map of $U_n$ as in \eqref{sys:discrete}, since if we write the transition kernel of process $u_t$ from time $0$ to time $h$ as $K_h(u, dv)$, then it suffices to let 
\[
\Psi_h(u)=\mathbb{E}(u_h|u_0=u)=\int vK_h(u,dv),\quad \zeta_n=u_{nh}-\Psi_h(u_{(n-1)h}).  
\]
The reader should notice that the introduction of the nonlinear map $\Psi_h$ and random sequence $\zeta_n$ is just for the convenience of  our formal illustration and rigorous proof. In order to do the forecast step of EnKF or ESQF in practice, it is not necessary to find the concrete formulation of  $\Psi_h$; it suffices to simulate the SDE starting from each posterior ensemble $V_{n-1}^{(k)}$ from time $0$ to $h$, and let the forecast ensemble $\Vhat^{(k)}_n$ be the  realization of the simulation at time $h$. 

It is also easy to obtain an energy principle, Assumption \ref{aspt:kinetic}, for the discrete time version $U_n=u_{nh}$, as long as the original SDE satisfies that $\langle u, \psi(u)\rangle\leq -\gamma |u|^2+k$ for some $\gamma, k>0$. Because then the generator of the square norm $|u_t|^2$ would satisfy
\[
\mathcal{L}|u_t|^2=2\langle u_t, \psi(u_t)\rangle+\text{tr}(\Sigma\Sigma^T)\leq -2\gamma|u_t|^2+2k+\text{tr}(\Sigma\Sigma^T),
\]
which by Gr\"{o}nwall's inequality and Dynkin's formula yields
\begin{align*}
\mathbb{E} |u_h|^2&\leq e^{-2\gamma h}|u_0|^2+\int^h_0 e^{-2\gamma (h-s)}(2k+\text{tr}(\Sigma\Sigma^T)) ds\\
&\leq e^{-2\gamma h}|u_0|^2+h(2k+\text{tr}(\Sigma\Sigma^T)).
\end{align*}

Moreover, when the stochastic covariance matrix $\Sigma$ is nonsingular, by \cite{SV72} the transition kernel $K_h(u,dv)$ is equivalent to the Lebesgue measure on $\mathbb{R}^d$; also by Theorem 38.16 of \cite{RW00}, this density is smooth both in $u$ and $v$. As a consequence, the nondegnerate system noise condition, Assumption \ref{aspt:density}, holds because $\{(u,v):|u|\leq M_1, |v|\leq M_2\}$  is compact. More generally, Assumption \ref{aspt:density} should hold for discrete time formulation of hypoelliptic systems where certain controllability conditions are satisfied, please refer to \cite{MS02} and the reference therein.

\section{EAKF with rank deficiency}
\label{sec:EAKFpatho}
In this section, we demonstrate that, when there is rank deficiency, in order to achieve the posterior covariance relation \eqref{eqn:postprior} for EAKF, it is necessary to use specific eigen-decompositions in the formulation of EAKF. In particular, certain choices of eigen-decomposition will result in a violation of \eqref{eqn:postprior}. We will also show that the formulation employed in Section \ref{sec:formulationEAKF} does always satisfy \eqref{eqn:postprior}. 
\par
The following example illustrates that not all eigen-decompoisiton will satisfy \eqref{eqn:postprior}. Let
\[
\Shat=\begin{bmatrix}
1 &-1\\
0 & 0
\end{bmatrix},\quad 
H=\begin{bmatrix}
0 &0\\
0 &0
\end{bmatrix}\;,
\]
then the SVD of $\Shat$ is given by
\[
\Shat=Q\Lambda R=\begin{bmatrix}
1 &0\\
0 & 1
\end{bmatrix}
\begin{bmatrix}
\sqrt{2} &0\\
0 & 0
\end{bmatrix}
\begin{bmatrix}
\frac{1}{\sqrt{2}} &\frac{-1}{\sqrt{2}}\\
\frac{1}{\sqrt{2}} &\frac{1}{\sqrt{2}}
\end{bmatrix}
\]
Notice that $\Lambda Q^T H^THQ \Lambda=0$, so $G$ can be chosen as any orthonormal matrix. One possible choice of matrix $G$ is $R^T$ and $D=0$. However, the spread update with $G=R^T$ following the classical formulation is
\[
S=Q\Lambda G^T(I+D)^{-1/2}\Lambda^\dagger Q^T\Shat=\begin{bmatrix}
1 &-1\\
0 & 0
\end{bmatrix} \begin{bmatrix}
\frac{1}{\sqrt{2}} & 0\\
0 & 0
\end{bmatrix}I_2\begin{bmatrix}
1 &-1\\
0 & 0
\end{bmatrix}
=\begin{bmatrix}
\frac{1}{\sqrt{2}} &\frac{-1}{\sqrt{2}}\\
0 & 0
\end{bmatrix}.
\]
This posterior spread $S$ does not match the posterior covariance relation \eqref{eqn:postprior}, which requires that $SS^T=\Shat \Shat^T$ since $H=0$ indicates there is no observation.  It is necessary for us to choose $G=I_2$ here, which produces the right posterior covariance, 
\[
S=Q\Lambda G^T(I+D)^{-1/2}\Lambda^\dagger Q^T\Shat=\begin{bmatrix}
\sqrt{2} & 0\\
0 & 0
\end{bmatrix} I_2 \begin{bmatrix}
\frac{1}{\sqrt{2}} & 0\\
0 & 0
\end{bmatrix}I_2\begin{bmatrix}
1 &-1\\
0 & 0
\end{bmatrix}
=\begin{bmatrix}
1 &-1\\
0 & 0
\end{bmatrix}. 
\]
In general, as long as $G_2R_1^T=R_2 G_1^T=0$ holds in the formulation of EAKF, then the posterior covariance relation \eqref{eqn:postprior} holds with formulation \eqref{sys:EAKFspreaddeficient}. One can see that from 
\[
SS^T=Q_1\Lambda_1 G^T_1(I+D_1)^{-1}G_1\Lambda_1Q_1^T=
Q_1\Lambda_1[I+\Lambda_1 Q_1^TH^THQ_1\Lambda_1]^{-1}\Lambda_1Q_1^T
\]
which satisfies our need because of the following algebraic identity with $A=Q_1\Lambda_1$
\[
[I+(K-1)^{-1}A^TH^THA]^{-1}=I-(K-1)^{-1}A^TH^T(I+HAA^T H^T)^{-1}HA. 
\]

On the practical side, in mathematical computing packages, directly applying  the classical EAKF formulation \eqref{sys:EAKFspread} will produce the same result as the formula we used \eqref{sys:EAKFspreaddeficient}. To see this, we note that 
\[
\Lambda^T Q^TH^THQ\Lambda =\begin{bmatrix}
\Lambda_1 &0\\
0 &0
\end{bmatrix}
\begin{bmatrix}
Q_1^T\\
Q_2^T
\end{bmatrix}H^TH\begin{bmatrix}Q_1 &Q_2\end{bmatrix}
\begin{bmatrix}
\Lambda_1 &0\\
0 &0
\end{bmatrix}=\begin{bmatrix}
\Lambda_1Q_1^TH^THQ_1\Lambda_1 &0\\
0 &0
\end{bmatrix}.
\]
In the default setting of most mathematical programs, the eigenvalue decomposition will arrange the eigenvalues in decreasing order, and due to the block structure, producing the orthonormal matrix $G$ and eigenvalue matrix $D$ to be
\[
G=\begin{bmatrix}
G_1 &0\\
0 & I_{K-r}
\end{bmatrix},\quad 
D=\begin{bmatrix}
D_1 &0\\
0 & 0
\end{bmatrix}. 
\]
Then $S$ is given by the following,
\[
Q\Lambda G^T(I+D)^{-1/2}\Lambda^\dagger Q^T\Shat
=[Q_1\Lambda_1 \quad 0]\begin{bmatrix}
G_1^T &0\\
0 & I
\end{bmatrix}
\begin{bmatrix}
(I+D_1)^{-1/2} &0\\
0 & I
\end{bmatrix}
\begin{bmatrix}
\Lambda_1^{-1} &0\\
0 & I
\end{bmatrix}
\begin{bmatrix}
Q_1^T\\
Q_2^T
\end{bmatrix}Q_1\Lambda_1R_1,
\]
which is the same as $Q_1\Lambda_1G_1^T(I+D_1)^{-1/2}R_1$.

 \section{Perturbation theory for matrices}
\label{sec:perturb}
In \cite{Kat82}, the perturbation theory for matrices are carefully studied. Here we collect some useful results for our study of the controllability of ESRF, where all the section numbers and page numbers mentioned below are referring to \cite{Kat82} if not otherwise specified. One thing the reader must be careful is that most results in \cite{Kat82} are for univariate perturbation, while our interest lies in multivariate perturbation. So we generally follow the strategy of Section II.5.7, that is trying to express matrices through contour integral of the resolvent, and restrict to cases with stable  spectrum structure when eigenprojections are involved (hence why we pick a specific $\vec{F}^*$ in Lemma \ref{lem:onepoint}).

The most fundamental tool used in \cite{Kat82} is the resolvent of a symmetric positive semi-definite matrix $C$,  given in Section I.5.2 by 
\[
R(\zeta,C)=(C-\zeta)^{-1}.
\]
Clearly $R(\zeta,C)$ is well defined at any $\zeta$ not inside the spectrum of $C$. Moreover if $C$ depends $\mathbf{C}^k$ upon on a multivariate $\vec{F}$, using the derivative formula for matrix inversion, $R(\zeta,C)$ depends $\mathbf{C}^k$ upon $\vec{F}$ as well. The resolvent can be used in the Dunford-Taylor integral to give simple expression for functions of matrix $C$, see section I.5.6. For example in the computation of  ETKF,  we need to compute $(I+C)^{-1/2}$, which can be expressed as 
\[
(I+C)^{-1/2}=\frac{1}{2\pi i }\int_{\Gamma}(1+\zeta)^{-1/2}R(\zeta,C)d\zeta.
\]
Here $\Gamma$ is a closed loop in the positive half of the complex plain that encloses all eigenvalues of $C$. Then by the differentiability of $R(\zeta_,C)$,  it is very easy to see the following lemma
\begin{lem}
\label{lem:matrixhalf}
Let $C(\vec{F})$ be a symmetric semi positive definite matrix that depends $\mathbf{C}^k$ upon $\vec{F}\in \mathbb{R}^n$, then $(I+C(\vec{F}))^{-1/2}$ exists and depends $\mathbf{C}^k$ upon $\vec{F}$. 
\end{lem}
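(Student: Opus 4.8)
The plan is to prove this by the holomorphic (Dunford--Taylor) functional calculus, exactly along the lines already sketched for the ETKF computation above, and then to differentiate under the integral sign. The only point requiring a little care is that the spectrum of $I+C(\vec F)$ moves with $\vec F$, so one must fix the integration contour on a neighbourhood; this is where positive semidefiniteness is used.

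First I would localize. Fix $\vec F_0 \in \reals^n$ and a bounded neighbourhood $\CB$ of it. Since $C(\cdot)$ is continuous, $\rho := \sup_{\vec F \in \CB} \|C(\vec F)\| < \infty$, and because $C(\vec F)$ is symmetric positive semidefinite the spectrum of $I + C(\vec F)$ lies in the real interval $[1, 1+\rho]$ for every $\vec F \in \CB$. Choose once and for all a positively oriented closed rectifiable contour $\Gamma$ contained in the right half-plane $\{\Re \zeta > 0\}$ that encircles $[1, 1+\rho]$ and stays in the domain of the principal branch of $\zeta \mapsto \zeta^{-1/2}$ (holomorphic on $\mathbb{C}\setminus(-\infty,0]$). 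For $\vec F \in \CB$ the contour avoids the spectrum of $I+C(\vec F)$, so
\[
F(\vec F) := \frac{1}{2\pi i}\int_{\Gamma} \zeta^{-1/2}\,\bigl(\zeta I - (I+C(\vec F))\bigr)^{-1}\,d\zeta
\]
is well-defined. For each fixed $\vec F$, the functional calculus identifies $F(\vec F)$ with $g(I+C(\vec F))$ for $g(\zeta)=\zeta^{-1/2}$; since the spectrum of $I+C(\vec F)$ is real and $\geq 1$, $g$ is there the positive inverse square root, so $F(\vec F)$ is the unique symmetric positive definite matrix with $F(\vec F)^2(I+C(\vec F))=I$, i.e. $F(\vec F) = (I+C(\vec F))^{-1/2}$ in the usual sense. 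In particular it exists.

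For the regularity, note that $\vec F \mapsto \zeta I-(I+C(\vec F))$ is $\mathbf{C}^k$ by hypothesis and matrix inversion is $\mathbf{C}^\infty$ on the invertible matrices, with derivative $\CD L^{-1} = -L^{-1}(\CD L)L^{-1}$. Hence $(\vec F,\zeta)\mapsto (\zeta I-(I+C(\vec F)))^{-1}$ is $\mathbf{C}^k$ in $\vec F$, and all its $\vec F$-partials of order $\leq k$ are jointly continuous on the compact set $\overline{\CB}\times\Gamma$, hence uniformly bounded there. This licenses differentiation under the integral sign up to order $k$; the differentiated integrals converge and depend continuously on $\vec F$, so $F \in \mathbf{C}^k(\CB)$. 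Since $\vec F_0$ was arbitrary, $F \in \mathbf{C}^k(\reals^n)$.

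The only genuine obstacle is the uniformity that lets the single contour $\Gamma$ serve for all $\vec F$ in a neighbourhood and that justifies differentiating under the integral; this is dispatched by the local bound $\|C(\vec F)\|\leq\rho$ together with positive semidefiniteness, which confines the spectrum of $I+C(\vec F)$ to a fixed compact subinterval of $(0,\infty)$ bounded away from the branch cut. Everything else is the standard resolvent estimate of Section~I.5 of \cite{Kat82}.
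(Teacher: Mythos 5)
Your proof is correct and follows essentially the same route as the paper's: both express $(I+C(\vec F))^{-1/2}$ as a Dunford--Taylor contour integral of the resolvent and deduce $\mathbf{C}^k$ dependence from the $\mathbf{C}^k$ dependence of the resolvent on $\vec F$. You supply the uniformity details (a single contour valid on a neighbourhood, justification for differentiating under the integral) that the paper leaves implicit, but the argument is the same.
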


When both $C$ and its perturbation are  symmetric, the projection to the eigenspace of a particular eigenvalue $\lambda$, which is called the eigenprojection (Section I.5.3), can be defined through a contour integral of the resolvent using Cauchy's integral formula:
\[
P_\lambda=-\frac{1}{2\pi i}\int_{\Gamma_\lambda} R(\zeta,C)d\zeta,
\]
where $\Gamma_\lambda$ is a path in the complex half-plane $\{ z \in \mathbb{C} : {\rm{Re}}(z) > -1 \}$ that encloses $\lambda$ but not any other eigenvalues.  As a reminder, when the eigenvalue $\lambda$ splits into different branches after perturbation, the perturbed eigenprojection $P_\lambda (\vec{F})$ will be the sum of the eigenprojections of all branches, and therefore it is also known as the total projection of the $\lambda$-group (Section II.2.1). This is a slightly nasty case and we try to avoid it by picking a point where the spectrum is stable.  In particular, if $\lambda$ is a simple eigenvalue, $P_\lambda$ is the orthogonal projection to the eigenvector, and the splitting singularity does not exist for $\lambda$. Because $R(\zeta,C)$ is smooth or $\mathbf{C}^k$ w.r.t. $\vec{F}$ and the eigenvalues are continuous with respect to perturbations, the eigenprojection $P_\lambda$ is smooth or $\mathbf{C}^k$ w.r.t. $\vec{F}$. In particular, the directional derivative in a direction $f$  can be  computed as 
\begin{equation}
\label{eqn:P1}
\mathcal{D}_f P_\lambda= P_\lambda (\mathcal{D}_f C)[\sum_{\eta\neq \lambda}(\lambda-\eta)^{-1} P_\eta]
+ [\sum_{\eta\neq \lambda}(\lambda-\eta)^{-1} P_\eta] (\mathcal{D}_f C)P_\lambda.
\end{equation}
Here the summation of $\eta$ is over all spectrum of $C$ that is not $\lambda$. (See page 88 equation (2.14) because all the eigenvalues are semi simple when $C$ is symmetric, there $S$ is given by (5.28) and (5.32) on page 42). 

The perturbation to the eigenvectors is studied through transformation functions (Section II.4.2). Consider a perturbation $C(x)$ parametrized by a scalar $x$ such that $C=C(0)$, then the transformation matrix is defined through an ODE:
\begin{equation}
\label{eqn:ODE}
\dot{U}(x)=\sum_{\lambda}\dot{P}_\lambda(x) P_{\lambda}(x)U(x),\quad U(0)=I_d. 
\end{equation}
One can similarly define $U^{-1}(x)$ and consequently show that $U(x)P_\lambda(x)U^{-1}(x)=P_\lambda(0)$ for all eigenvalues $\lambda$ of $C$. Moreover, when $C$ and $M$ are symmetric, $U(x)$ is actually an orthogonal matrix (Section II.6.2), so if no eigenvalues splitting occur after the perturbation, then $U(x)$ actually provides a transformation  of the eigenvectors of $C$ to the ones of $C(x)$. Therefore, for symmetric perturbation $C(\vec{F})$ small enough, we can define a transformation matrix $U(\vec{F})$ as $U(1)$ with  $C_{\vec{F}}(x)=C+x(C(\vec{F})-C)$. If $C(\vec{F})$  depends $\mathbf{C}^k$ on $\vec{F}$, then because the coefficients of the ODE \eqref{eqn:ODE} depends $\mathbf{C}^k$ on $\vec{F}$, then so is the solution 
\[
U(\vec{F})=\exp\bigg(\int^1_0 \sum_{\lambda}\dot{P}_{\lambda,\vec{F}}(x) P_{\lambda,\vec{F}}(x)dx\bigg),\quad\text{with}\quad
P_{\lambda,\vec{F}}(x)=-\frac{1}{2\pi i}\int_{\Gamma_\lambda} R(\zeta,C_{\vec{F}}(x))d\zeta
\]
and $\dot{P}_{\lambda,\vec{F}}(x)$ given by \eqref{eqn:P1}. Notice that $C_{\vec{F}}(\epsilon x)=C_{\epsilon \vec{F}}(x)$, and $D_{\epsilon \vec{F}}C=\epsilon^{-1}D_{\vec{F}} C$ so $\dot{P}_{\lambda,\epsilon \vec{F}}=\epsilon^{-1}\dot{P}_{\lambda,\epsilon\vec{F}}$ using a change of variable formula we get
\begin{align*}
U(\epsilon\vec{F})
&=\exp\bigg(\int^1_0 \sum_{\lambda}\dot{P}_{\lambda,\epsilon\vec{F}}(x) P_{\lambda,\epsilon\vec{F}}(x)dx\bigg)\\
&=\exp\bigg(\int^\epsilon_0 \sum_{\lambda}\dot{P}_{\lambda,\vec{F}}(x) P_{\lambda,\vec{F}}(x)dx\bigg).
\end{align*}
So the directional derivative of $U(\vec{F})$ at $\vec{F}=0$ is
\begin{align}
\notag
\mathcal{D}_f U(\vec{F})\bigg|_{\vec{F}=0}&=\frac{d}{d\epsilon}\exp\bigg(\int^\epsilon_0 \sum_{\lambda}\dot{P}_{\lambda,f}(x) P_{\lambda,f}(x)dx\bigg)\bigg|_{\epsilon=0}=\sum_{\lambda}\dot{P}_{\lambda,f} P_{\lambda,f}\\
\label{eqn:perturbU}
&=\sum_{\lambda}\bigg[P_\lambda (\mathcal{D}_f C)[\sum_{\eta\neq \lambda}(\lambda-\eta)^{-1} P_\eta]
+ [\sum_{\eta\neq \lambda}(\lambda-\eta)^{-1} P_\eta] (\mathcal{D}_f C)P_\lambda\bigg]P_\lambda. 
\end{align}
Formula \eqref{eqn:perturbU} can also be presented in a more concrete matrix form through its image over eigenvectors. Suppose the eigenvalue decomposition of a symmetric matrix $C$ be given as $C=\Psi^T\Sigma\Psi$, we consider the perturbation of $\Psi(\vec{F})=\Psi U(\vec{F})^T$ near $\vec{F}^*$. Let $\psi_i$ be one of the left eigenvectors associated with the $i$-th eigenvalue $\lambda_l$, then $P_{\lambda_i}$ is give by matrix $\Psi_i^T\Psi_i$, where $\Psi_i$ is $\Psi$ with rows not associated with $\lambda_l$ removed.  Then using $\psi_i\Psi^T_j\Psi_j=1_{i=j}\psi_i$ we have
\begin{equation}
\label{eqn:perturbeigen}
\mathcal{D}_M\psi_i=
\psi_i (\mathcal{D}_f C)\sum_{j\neq i}(\lambda_i-\lambda_j)^{-1} \Psi^T_j\Psi_j.
\end{equation}
To conclude, we have shown that
\begin{lem}
\label{lem:eigentransform} 
Let $C(\vec{F})\in \mathbb{R}^{d\times d}$ be a symmetric semi positive definite matrix that depends $\mathbf{C}^1$ upon $\vec{F}\in \mathbb{R}^n$, then around any point $\vec{F}^*$, there is an orthogonal transformation map $U(\vec{F})$ that depends $\mathbf{C}^1$ upon $\vec{F}$ with directional derivative given by \eqref{eqn:perturbU} or matrix representation \eqref{eqn:perturbeigen}. It characterizes the perturbation of eigen-projection  by
\[
U(\vec{F})^TP_\lambda(\vec{F})U(\vec{F})=P_\lambda(\vec{F}^*).
\]
In particular, when $\lambda$ is a simple eigenvalue of $C(\vec{F}^*)$, $U(\vec{F})$ transforms the eigenvector of $C(\vec{F}^*)$ associated with $\lambda$ to an eigenvector of $C(\vec{F})$ associated with the perturbed $\lambda$.
\end{lem}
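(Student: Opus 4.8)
The plan is to reduce the multivariate perturbation to a one-parameter family and invoke Kato's transformation-function construction, assembling the resolvent and eigenprojection facts established above. First I would fix $\vec{F}^*$ and, for $\vec{F}$ in a small neighborhood, connect the two matrices by the straight-line family $C_{\vec{F}}(x):=C(\vec{F}^*)+x\big(C(\vec{F})-C(\vec{F}^*)\big)$, so that $C_{\vec{F}}(0)=C(\vec{F}^*)$ and $C_{\vec{F}}(1)=C(\vec{F})$. Since $C(\cdot)$ is $\mathbf{C}^1$, this family is jointly $\mathbf{C}^1$ in $(\vec{F},x)$.

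Next I would establish spectral stability: because the eigenvalues of a symmetric matrix depend continuously on its entries, one can choose, for each eigenvalue $\lambda$ of $C(\vec{F}^*)$, a contour $\Gamma_\lambda$ in $\{\mathrm{Re}(z)>-1\}$ enclosing $\lambda$ and no other eigenvalue, and then shrink the neighborhood of $\vec{F}^*$ so that every $\Gamma_\lambda$ continues to separate the spectrum of $C_{\vec{F}}(x)$ for all $x\in[0,1]$. On these contours the resolvent $R(\zeta,C_{\vec{F}}(x))=(C_{\vec{F}}(x)-\zeta)^{-1}$ is uniformly bounded and, by the inverse-derivative formula $\mathcal{D}L^{-1}=-L^{-1}(\mathcal{D}L)L^{-1}$, depends $\mathbf{C}^1$ on $(\vec{F},x)$; hence the contour integrals defining $P_\lambda$ are $\mathbf{C}^1$, with directional derivative given by \eqref{eqn:P1}.

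With this in hand I would define $U(\vec{F})$ as the value $U_{\vec{F}}(1)$ of the solution of the transformation ODE \eqref{eqn:ODE} along the ray, whose generator $A_{\vec{F}}(x)=\sum_\lambda \dot P_\lambda(x)P_\lambda(x)$ is $\mathbf{C}^1$ in the parameters; by smooth dependence of ODE solutions on parameters, $U(\vec{F})$ is $\mathbf{C}^1$. Orthogonality is the key structural point: since each $P_\lambda$ is a symmetric (orthogonal) projection, differentiating $P_\lambda^2=P_\lambda$ and $\sum_\lambda P_\lambda=I$ gives $A+A^T=\sum_\lambda(\dot P_\lambda P_\lambda+P_\lambda\dot P_\lambda)=\sum_\lambda \dot P_\lambda=0$, so $A$ is skew-symmetric and $\tfrac{d}{dx}(U^TU)=U^T(A+A^T)U=0$, forcing $U^TU\equiv I$. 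The intertwining relation then follows by showing $\tfrac{d}{dx}\big(U^{-1}P_\lambda U\big)=U^{-1}\big(\dot P_\lambda-[A,P_\lambda]\big)U$ and verifying the commutator identity $[A,P_\lambda]=\dot P_\lambda$ directly from $P_\mu P_\lambda=\delta_{\mu\lambda}P_\lambda$ and $P_\lambda\dot P_\lambda P_\lambda=0$; this makes $U^{-1}P_\lambda U$ constant in $x$, so $U(\vec{F})^TP_\lambda(\vec{F})U(\vec{F})=P_\lambda(\vec{F}^*)$. Evaluating $\mathcal{D}_fU$ at $x=0$ and substituting \eqref{eqn:P1} yields \eqref{eqn:perturbU}, and projecting onto eigenvectors gives \eqref{eqn:perturbeigen}. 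The simple-eigenvalue statement is then immediate, since a rank-one $P_\lambda$ is the projection onto the eigenvector and the intertwining relation maps the unperturbed eigendirection onto the perturbed one.

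The main obstacle is the genuinely multivariate character of the perturbation: Kato's transformation functions are built for a single parameter, so everything must be funneled through the ray parametrization, and the delicate point is guaranteeing that the contours $\Gamma_\lambda$ remain valid simultaneously for all $x\in[0,1]$ and all $\vec{F}$ in one fixed neighborhood. This uniform spectral-stability requirement is exactly why the statement is local (``around any point $\vec{F}^*$'') and why, in the applications, one is careful to base the construction at points where the relevant spectrum is simple and bounded away from the troublesome clustering at the origin.
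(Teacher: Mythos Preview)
Your proposal is correct and follows essentially the same route as the paper: reduce the multivariate perturbation to the one-parameter ray $C_{\vec{F}}(x)=C(\vec{F}^*)+x(C(\vec{F})-C(\vec{F}^*))$, invoke Kato's transformation ODE \eqref{eqn:ODE} to define $U(\vec{F})$ as the time-one map, and read off the derivative formulas \eqref{eqn:perturbU}--\eqref{eqn:perturbeigen}. If anything you are more explicit, supplying the skew-symmetry computation for orthogonality and the commutator identity $[A,P_\lambda]=\dot P_\lambda$ for the intertwining relation, whereas the paper simply cites the relevant sections of Kato.
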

\section*{References}
\bibliographystyle{unsrt}
\bibliography{ref}
\end{document}